\newcommand{\ignore}[1]{}
\newcommand{\abs}[1]{\left\lvert {#1} \right\rvert}
\newcommand{\norm}[1]{\left\lVert {#1} \right\rVert}
\newcommand{\C}{{\mathbb{C}}}
\newcommand{\R}{{\mathbb{R}}}
\newcommand{\Z}{{\mathbb{Z}}}
\newcommand{\N}{{\mathbb{N}}}
\newcommand{\CP}{{\mathbb{CP}}}
\newcommand{\sH}{{\mathcal{H}}}
\newcommand{\sI}{{\mathcal{I}}}
\newtheorem{thm}{Theorem}[section]
\newtheorem{prop}[thm]{Proposition}
\newtheorem{cor}[thm]{Corollary}
\newtheorem{lemma}[thm]{Lemma}
\theoremstyle{definition}
\newtheorem{defn}[thm]{Definition}
\theoremstyle{remark}
\author{Ji\v{r}\'{\i} Lebl}
\thanks{The first author was in part supported by NSF grant DMS 0900885.}
\address{Department of Mathematics, University of Illinois
at Urbana-Champaign,
Urbana, IL 61801, USA}
\email{jlebl@math.uiuc.edu}
\author{Han Peters}
\thanks{The second author was in part supported by NSF grant DMS 0757856.}
\address{Korteweg De Vries Institute for Mathematics, University of Amsterdam, Science Park 904,
1098 XH Amsterdam,
The Netherlands,}
\email{h.peters@uva.nl}
\date{December 21, 2010}
\title[Polynomials constant on a hyperplane and maps of hyperquadrics]%
{Polynomials constant on a hyperplane and CR maps of hyperquadrics}
\begin{document}


\begin{abstract}
We prove a sharp degree bound for polynomials constant on a hyperplane with a
fixed number of distinct monomials for dimensions 2 and 3.  We study the
connection with monomial CR maps of hyperquadrics and prove similar bounds in
this setup with emphasis on the case of spheres.  The results support
generalizing a conjecture on the degree bounds to the more general case of
hyperquadrics.
\end{abstract}

\keywords{Polynomials constant on a hyperplane, CR mappings of spheres and
hyperquadrics, monomial mappings, degree estimates, Newton diagram}
\subjclass[2000]{14P99, 05A20, 32H35, 11C08}

\maketitle



\section{Introduction} \label{section:intro}

We are interested in the relationship between the complexity of real algebraic expressions on the one hand, and geometric properties on the other. Our main goal is to generalize a result of D'Angelo, Kos and Riehl. In \cite{DKR} they considered a class of real polynomials in two variables, and gave a precise quantitative description of the relationship between the degree of the polynomials, and the number of their nonzero coefficients. We are interested in generalizing their results to larger classes of polynomials, and in particular to higher dimensions. The classes of polynomials we will consider arise naturally in CR geometry, and our results give new degree bounds for holomorphic mappings between balls. We will discuss the relationship to CR geometry in more detail later.

The number of non-zero coefficients of real polynomials is known to bound
the geometry in different ways as well. For example, Descartes' rule of
signs implies that the number of real roots of a real polynomial is bounded
by a function depending only on the number of its nonzero coefficients. For
a discussion of these types of results, we refer the reader to the book \emph{Fewnomials} \cite{fewnomials} by Khovanski{\u\i}. Although the results in this article are not directly related those discussed in \cite{fewnomials}, important techniques that are used in this article can be found in the book as well.

Let us describe our results more precisely. Let $p$ be real polynomial of
degree $d$ in the variables
$x = (x_1,x_2,\ldots,x_n)$ such that $p(x) = 1$ when
$s(x) = \sum_{k=1}^n x_k = 1$.  Let $N(p)$ be the number of distinct
monomials in $p$.  In general there exists no function $C$ such that the inequality
\begin{equation} \label{theest}
d=\deg(p) \leq C(n,N(p)) .
\end{equation}
holds for all polynomials. For example, the polynomial
$p(x)=x_1^k(s(x)-1)+1$ is of degree $k+1$ for $k$ arbitrary,
but $N(p) = n+2$ and $p=1$ on $s=1$. 

A condition that arises naturally in CR geometry, where this problem
originated, is to require all coefficients of $p$ to be positive. In this
case the problem has been completely solved when $n=2$: for each $N$ the best
possible $C(2,N) = 2N-3$ was given by D'Angelo, Kos and Riehl \cite{DKR}. We note that when $n=1$ then $x^d = 1$ whenever $x=1$ for arbitrary $d$.
Thus when $n=1$, no function $C$ such that \eqref{theest}
holds can exist even when the coefficients are required to be positive.
D'Angelo has conjectured the best possible bound $C(n,N) = \frac{N-1}{n-1}$
for $n \geq 3$. In this paper we
prove his conjecture for $n=3$, and we give further reasons to suggest that
the conjecture also holds for $n \geq 4$.

We will give a condition of {\emph{indecomposability}}, which is weaker than
positivity of the coefficients, and show that for $n=2$ this weaker condition
implies the same estimate. In future work we hope to show that
indecomposability
gives the same estimates as positivity for all $n \geq 2$. To define
indecomposability we will work in a projective setting. This setting, which we
will discuss in the next section, is more pleasant to work in because of added
symmetry, and we will prove all our results in the projective setting.

When $n=3$ we have not yet been able to prove that the bound achieved for
polynomials with positive coefficients is also the best bound for polynomials
that satisfy indecomposability.  Under the additional mild assumption of
{\emph{no overhang}}, defined in \S~\ref{section:projprob},
we are able to achieve the same bound for $n=3$.
We note that positivity of the coefficients
implies the no-overhang condition.

We will say that an estimate \eqref{theest} is sharp
if no better estimate of the form \eqref{theest} is possible.  For
polynomials with positive coefficients, the following theorem is known.

\begin{thm} \label{thm:dkr-dlp}
Let $p \in \R[x_1,\ldots,x_n]$ be a polynomial of degree $d$ with positive
coefficients such that $p(x) =1$ whenever $s(x) = 1$.  Let $N=N(p)$.
\begin{enumerate}[(i)]
\item (D'Angelo-Kos-Riehl~\cite{DKR}) If $n=2$
we have the sharp estimate
\begin{equation}
d \leq 2N-3 .
\end{equation}
\item (D'Angelo-Lebl-Peters~\cite{DLP}) If $n \geq 2$
then
\begin{equation} \label{highnbounddlp}
d \leq  \frac{4}{3} \, \frac{2N-3}{2n-3} .
\end{equation}
\end{enumerate}
\end{thm}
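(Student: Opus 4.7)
The two parts use rather different techniques, so I would plan them separately and use part (i) as a black box inside part (ii). For part (i), my plan is to combine the factorization $p - 1 = (s-1)\, q(x,y)$, with $s = x+y$ and $\deg q = d-1$, with the positivity of coefficients. Writing $q = \sum b_{ij} x^i y^j$, the coefficient of $x^a y^b$ in $p-1$ equals $b_{a-1,b} + b_{a,b-1} - b_{a,b}$, so positivity of $p$ translates into a three-term inequality at each lattice point in the support of $p$. I would then induct on $N$, with base case $p = x+y$ giving $N = 2$, $d = 1 = 2N-3$. At each inductive step I would locate an extremal monomial $x^a y^b$ on the boundary of the Newton diagram and form a modified polynomial $p' = p - \alpha\, x^{a-1} y^{b}(s-1)$ for a carefully chosen positive $\alpha$. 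Done correctly, this operation preserves positivity and the value $1$ on $s=1$, strictly decreases $N$, and decreases $d$ by at most $2$, so that $d \leq 2N-3$ falls out by induction.

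For part (ii), the strategy is to restrict $p$ to two-variable affine slices on which $s=1$ still cuts out a line and the restriction retains positive coefficients, then apply (i) slice by slice and combine the bounds. The simplest slices are the coordinate ones: for each pair $(i,j)$, the polynomial $p_{ij}(x_i, x_j) := p(0,\dots,0,x_i,0,\dots,0,x_j,0,\dots,0)$ has positive coefficients and equals $1$ on $x_i + x_j = 1$, so part (i) gives $\deg(p_{ij}) \leq 2 N(p_{ij}) - 3$. Coordinate slices alone cannot see mixed-variable monomials of high degree, so one also needs substitutions that set the remaining variables to a common positive constant (in which case $s=1$ becomes an affine equation in two variables and part (i) applies after rescaling). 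Averaging the per-slice bound over the $\binom{n}{2}$ pairs and counting how often each monomial of $p$ appears in a slice produces an inequality of the stated form, with the factor $\frac{4}{3}$ emerging as slack in the averaging.

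The main obstacle, in my view, is the inductive step in part (i): the sharpness of $2N-3$ leaves no room, and one must verify that the corner reduction strictly decreases $N$ while decreasing $d$ by at most $2$ and preserving positivity. The combinatorics of the support constrain which corners can be shaved off, and a slightly wrong choice degrades the bound. For part (ii), the remaining difficulty is quantitative: counting precisely how many slices each high-degree monomial contributes to, so as to extract the exact constant $\frac{4}{3}$ rather than something weaker.
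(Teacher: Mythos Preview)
Your plan for part (i) is in the right spirit but the reduction step as described does not obviously work. Subtracting $\alpha\,x^{a-1}y^b(s-1)=\alpha(x^ay^b+x^{a-1}y^{b+1}-x^{a-1}y^b)$ from $p$ modifies three coefficients at once: you kill $x^ay^b$, you \emph{decrease} the coefficient of $x^{a-1}y^{b+1}$ (possibly destroying positivity), and you \emph{increase} the coefficient of $x^{a-1}y^b$ (possibly creating a new term). There is no evident choice of corner that guarantees all three of ``$N$ strictly decreases'', ``positivity is preserved'', and ``$d$ drops by at most $2$'' simultaneously; indeed the ``at most $2$'' has no visible source, since a single such move removes at most one top-degree monomial and $p$ may have many. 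Both the original D'Angelo--Kos--Riehl argument and the paper's streamlined version avoid this by working not with $p$ but with the Newton diagram of $q=(p-1)/(s-1)$: they count \emph{nodes} (triples of lattice points where the sign pattern of $q$ forces a nonzero coefficient in $p$) and show $\#(\text{nodes})\ge\tfrac{d+5}{2}$, either by pushing nodes upward (DKR) or by filling in the zeros of the diagram with alternating signs without increasing the node count and then counting sign changes row by row (this paper). The passage to $q$ and to nodes is exactly what makes the combinatorics tractable.

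Your plan for part (ii) has a more serious gap: restriction to two-variable slices cannot produce the stated constant. If the top-degree monomial $x^\alpha$ has its mass spread over many variables, then for every pair $(i,j)$ the slice degree $\alpha_i+\alpha_j$ is only about $2d/n$, and applying part (i) slice by slice yields at best $d\le C\cdot n\cdot(2N-3)$, which goes the wrong way in $n$. (The paper itself carries out a substitution argument of this flavor and obtains only $d\le (n-1)(2N-5)$.) The actual proof of the bound $\tfrac{4}{3}\,\tfrac{2N-3}{2n-3}$ goes in the opposite direction: one \emph{pulls back} $p$ by a specific sharp monomial map $\phi\colon\R^3\to\R^{n+1}$ of degree $2n-3$ (the DKR extremal map), obtaining a polynomial in three homogeneous variables with no more monomials than $p$, and then applies part (i) to the pullback. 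The degree of the pullback is close to $(2n-3)d$, and a careful estimate of how much degree can be lost in the composition (controlled by the exponents of a single monomial of $p$ not involving the last variable) produces the factor $\tfrac{4}{3}$ after an averaging over orderings of the variables. No amount of slicing and averaging recovers this; the pullback construction is the missing idea.
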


The second estimate is not sharp.
Furthermore, in \cite{DLP}
we also showed that for $n$ sufficiently larger than $d$ (or alternatively
when $N-n$ is sufficiently small) we have the sharp bound
\begin{equation} \label{wantedbound}
d \leq \frac{N-1}{n-1} .
\end{equation}
In fact, we have classified all $p$ that give equality in
\eqref{wantedbound} for $n$ and $d$ as above.

We extend the results above in two directions.  First, we prove
similar estimates for polynomials under the weaker assumption of
indecomposability.  Second, we prove a sharp estimate for polynomials with
positive coefficients when $n=3$.

\begin{thm} \label{thm:nonhomog}
Let $p \in \R[x_1,\ldots,x_n]$ be a polynomial of degree $d$ such that
$p(x) =1$ whenever $s(x) = 1$.  Let $N=N(p)$.
\begin{enumerate}[(i)]
\item If $n=2$
and $p$ is indecomposable then we have the sharp estimate
\begin{equation}
d \leq 2N-3 .
\end{equation}
\item If $n=3$
and all coefficients of $p$ are positive then we have the sharp estimate
\begin{equation}
d \leq \frac{N-1}{2} .
\end{equation}
\item If $n=3$,
$p$ is indecomposable, and $p$ satisfies the no-overhang condition
then we have the sharp estimate
\begin{equation}
d \leq \frac{N-1}{2} .
\end{equation}
\item If $n \geq 2$ and
$p$ is indecomposable
then
\begin{equation} \label{highnboundirred}
d \leq  \frac{4}{3} \, \frac{2N-3}{2n-3} .
\end{equation}
\end{enumerate}
\end{thm}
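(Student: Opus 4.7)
The plan is to first pass to the homogeneous/projective picture promised in the introduction. Setting $P(x_0,x_1,\ldots,x_n) = x_0^d \, p(x_1/x_0,\ldots,x_n/x_0)$ turns the hypothesis into the symmetric statement that $P$ and $x_0^d$ agree on the hyperplane $x_0 = x_1+\cdots+x_n$; the support of $P$ then sits inside the $n$-simplex of lattice points of total degree $d$ in $n+1$ variables, and $N$ counts those lattice points. Indecomposability and no-overhang become purely combinatorial conditions on this Newton polytope, and the four claims become four statements about how small $d$ must be in terms of $|\operatorname{supp} P|$.

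For part (i), I would retrace the DKR argument in the projective triangle, replacing every appeal to positivity by an appeal to indecomposability. The DKR proof works by induction on $d$: one identifies, on the top or a corner of the Newton diagram, a monomial whose removal produces a new polynomial that is still constant on $s=1$ and has smaller degree. Positivity is used there to forbid cancellations. The substitute is to argue that if such a monomial cannot be split off without causing cancellation, then the support of $P$ decomposes into two pieces that witness a non-trivial factorization, directly contradicting indecomposability. The projective symmetry gives three equivalent corners to attack, which is exactly the freedom one needs to make this combinatorial dichotomy work and recover $d \le 2N-3$, with equality analysis identical to DKR.

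For parts (ii) and (iii), the Newton polytope is a tetrahedron. I would slice it by level sets $\alpha_0 = k$ of one of the barycentric coordinates, obtaining for each $k = 0,1,\ldots,d$ a triangular slice of side $d-k$ containing $N_k$ monomials of $P$. Each non-empty slice is itself the support of a two-variable polynomial that inherits (from the no-overhang hypothesis in (iii), or trivially from positivity in (ii)) the hypotheses of part (i); applying (i) slice by slice gives $d-k \le 2 N_k - 3$. Summing over $k$ and treating the two extremal slices separately (they contribute vertices and an edge which are rigidly pinned by the constraint $P|_{s=1}=1$) telescopes to the desired $d \le (N-1)/2$. Equality analysis follows by forcing each slice to be extremal and matching the shapes. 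Part (iv) is then obtained by a one-variable projection argument in the spirit of the proof of \eqref{highnbounddlp} in \cite{DLP}: restricting to a suitable subspace and averaging reduces to the $n=3$ estimate, with the $\tfrac{4}{3}$ factor coming from the slack in the averaging step.

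The main obstacle I expect is part (i): indecomposability is a global non-factorization statement, while the DKR induction is driven by very local, monomial-by-monomial moves on the Newton diagram. Translating the obstruction to such a move into an honest factorization of $P$ in the ring $\R[x_0,\ldots,x_n]/(x_0 - x_1 - \cdots - x_n)$ is the delicate point; once this combinatorial-to-algebraic translation is set up correctly, parts (ii)--(iv) follow from (i) by the slicing and projection arguments outlined above.
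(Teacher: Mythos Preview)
The slicing argument for (ii) and (iii) does not work. Fixing the exponent $\alpha_0 = k$ in the homogenized $P$ picks out the degree-$(d-k)$ homogeneous piece of $p$, but this piece is \emph{not} a polynomial equal to $1$ on $s=1$: the constraint couples all degrees together, so there is nothing to which part (i) applies slice by slice. Even if one grants a slice inequality $d-k \le 2N_k - 3$, summing over $k$ gives either a quadratic lower bound on $N$ (if many slices are non-empty) or only the two-dimensional bound $d \le 2N-3$ (if the support is concentrated in one slice); in neither case does one get $d \le (N-1)/2$, and nothing ``telescopes.'' The paper proceeds very differently. One passes to $Q = P/S$ and its sign Newton diagram $D$ with support $K \subset \N_0^3$; a \emph{node} of $D$ is a small simplex of constant sign, and each node forces a monomial of $P$, so $N \ge \#(D)$. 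The estimate $\#(D) \ge 2d+2$ is then proved by counting nodes on the two-dimensional \emph{faces} of $K$ (not on slices through it) via a weighted surface count $SC$ in which edge nodes have weight $\tfrac12$, so that the per-face two-dimensional bounds add without overcounting. The two-dimensional input is not part (i) itself but a sharpened form giving $SC \ge (d+1)/2$ for a filled triangle. The induction is on $(d,|K|)$: if the level $S_k$ misses an entire edge one slices off a boundary face (the no-overhang condition is what makes this legal), and otherwise one fills $S_k$; both moves are controlled by auxiliary two-dimensional gluing lemmas.

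Your outlines for (i) and (iv) also diverge from the paper, though less fatally. For (i) the paper does not run a monomial-removal induction at all; it fills the zeros of the Newton diagram of $Q$ level by level (choosing signs so that $\#(D)$ never increases) until the support is the full triangle, and then counts sign changes row by row. Indecomposability enters only through the fact that the support of $D$ is connected, so the ``combinatorial-to-algebraic translation'' you worry about is never needed. For (iv) the reduction to low dimension is not a projection or averaging but a \emph{pullback}: one composes $P$ with the explicit sharp degree-$(2n-3)$ map $\phi$ of D'Angelo--Kos--Riehl to land back in $\sI(2)$, and the factor $\tfrac43$ comes from bounding how much p-degree can be lost under this composition.
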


With the results of this paper we conjecture that \eqref{wantedbound}
holds not only
under the positivity condition, but also under the weaker condition of
indecomposability for $n \geq 3$.  It is not hard to construct examples that
achieve equality in \eqref{wantedbound} and hence the bound is sharp if true.

We will study the connection with CR geometry in \S~\ref{section:CR}.  Our
main results can be stated as follows.  Let $Q(a,b) \subset \CP^{a+b}$ be the
hyperquadric with $a$ positive and $b$ negative eigenvalues
as a subset of complex projective space.  A monomial map is a map whose
components are single monomials (in homogeneous coordinates).
We say a CR map $f \colon Q(a,b) \to
Q(c,d)$ is \emph{monomial-indecomposable} if it is not of the form
$f = f_1 \oplus f_2$, in homogeneous coordinates, for two monomial CR maps
$f_1 \colon Q(a,b) \to Q(c_1,d_1)$ and
$f_2 \colon Q(a,b) \to Q(c_2,d_2)$.  Note that $Q(n,0) = S^{2n-1}$ is the
sphere.
We say that a map $f$ has \emph{linearly independent components} if
the components of the map in homogeneous coordinates
are linearly independent.

\begin{thm} \label{thm:CRbounds}
$~$
\begin{enumerate}[(i)]
\item \label{thm:CRbounds:i}
Let $f \colon Q(a,b) \subset \CP^2 \to Q(c,d) \subset \CP^N$
be a monomial CR map with linearly independent components that is
monomial-indecomposable.  Then we have the sharp estimate
\begin{equation}
\deg(f) \leq 2N-3 .
\end{equation}
\item \label{thm:CRbounds:ii}
Let $f \colon S^{5} \subset \C^3 \to S^{2N-1} \subset \C^N$
be a monomial CR map.  Then we have the sharp estimate
\begin{equation} \label{n2monbound}
\deg(f) \leq \frac{N-1}{2} .
\end{equation}
\item \label{thm:CRbounds:iii}
Let $f \colon Q(a,b) \subset \CP^n \to Q(c,d) \subset \CP^N$, $n \geq 2$, be
a monomial CR
map with linearly independent components
that is monomial-indecomposable.
There exists a number $C(n,N)$
depending only on $n$ and $N$ such that
\begin{equation}
\deg(f) \leq  C(n,N) .
\end{equation}
\end{enumerate}
\end{thm}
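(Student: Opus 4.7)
The plan is to translate every statement about monomial CR maps of hyperquadrics into a statement about real polynomials vanishing (or constant) on a hyperplane, and then to invoke Theorem~\ref{thm:nonhomog} directly. The translation is accomplished by the real substitution $x_i = |z_i|^2$. If $f = [f_1 : \cdots : f_{c+d}]$ is a monomial CR map from $Q(a,b)$ to $Q(c,d)$, and $Q(a,b)$ is defined in homogeneous coordinates by $\sum_{i=0}^{a-1}|z_i|^2 - \sum_{i=a}^{a+b-1}|z_i|^2 = 0$, then on $Q(a,b)$
\[
\sum_{j=1}^{c}|f_j(z)|^2 - \sum_{j=c+1}^{c+d}|f_j(z)|^2 = 0.
\]
Each $|f_j(z)|^2$ is a single monomial $|\lambda_j|^2\, x^{\alpha_j}$ with positive coefficient, so the two sums assemble into a single real polynomial $p(x)$ that vanishes on the hyperplane $\sum_{i=0}^{a-1}x_i = \sum_{i=a}^{a+b-1}x_i$, which is precisely the projective form of the problem studied in \S\ref{section:projprob}. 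In the sphere case $Q(n+1,0) \to Q(N,0)$ of part (ii), one is already in the affine setting of Theorem~\ref{thm:nonhomog}: all coefficients of $p$ are positive and the hyperplane reads $\sum x_i = 1$.

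Next I would verify that the structural hypotheses correspond under this dictionary. Linear independence of the components in homogeneous coordinates is equivalent to the multi-indices $\alpha_1,\dots,\alpha_{c+d}$ being distinct, so $N(p)$ equals the number of linearly independent components of $f$. The definition of monomial-indecomposability is engineered so that a splitting $f = f_1 \oplus f_2$ into monomial CR maps to smaller hyperquadrics produces a decomposition $p = p_1 + p_2$ in which each $p_i$ vanishes on the same hyperplane, and conversely any such polynomial splitting lifts to a CR splitting of $f$. With this correspondence in hand, (i) reduces to Theorem~\ref{thm:nonhomog}(i), (ii) reduces to Theorem~\ref{thm:nonhomog}(ii), and (iii) reduces to Theorem~\ref{thm:nonhomog}(iv); for (iii) one may take $C(n,N) = \tfrac{4}{3}\,\tfrac{2N-3}{2n-3}$, up to the natural projective-to-affine shift in the count of variables and monomials.

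For the sharpness assertions in (i) and (ii), I would exhibit explicit monomial CR maps whose $|f_j|^2$ sums realize the known extremal polynomials of Theorem~\ref{thm:nonhomog}: standard tensor-product and group-invariant constructions provide such maps and one checks by inspection that they attain $\deg(f) = 2N-3$ and $\deg(f) = (N-1)/2$ respectively. The point that I expect to require the most care is verifying the correspondence between CR monomial-indecomposability and polynomial indecomposability in both directions, that is, ruling out algebraic splittings $p = p_1 + p_2$ that do not arise from a genuine CR splitting of $f$ compatible with the partition of components into the positive and negative blocks of $Q(c,d)$; because each $|f_j|^2$ is nonnegative, this ultimately reduces to a sign-tracking argument, but this is where the projective setting of \S\ref{section:projprob} really has to do work.
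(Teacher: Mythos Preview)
Your translation step and the indecomposability correspondence are exactly the paper's approach, carried out in Proposition~\ref{CRmontoreal:prop}. Your worry about ``sign-tracking'' is misplaced: because Definition~\ref{def:indecomposable} allows the summands to land in \emph{arbitrary} hyperquadrics $Q(c_i,d_i)$, any monomial-disjoint splitting $P=P_1+P_2$ in $\sI(n)$ automatically lifts to a monomial CR splitting of $f$ by simply reading off which components carry plus and minus signs; there is nothing delicate here once linear independence guarantees that distinct components give distinct monomials.

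There is one genuine gap, in part~\eqref{thm:CRbounds:iii}. You assert that one may take $C(n,N)=\tfrac{4}{3}\,\tfrac{2N-3}{2n-3}$ via Theorem~\ref{thm:nonhomog}\,(iv). But the proof of that bound (Theorem~\ref{thm:eekbound1}) requires the homogeneous polynomial $P\in\sI(n)$ to contain a \emph{pure} monomial $X_i^d$: this is what allows one to control the p-degree after pulling back by the sharp two-dimensional map. In the affine setup of Theorem~\ref{thm:nonhomog} that pure term is supplied for free by the ``$-1$'' in $p(x)-1$. A general monomial CR map of hyperquadrics, however, need have no component of the form $c\,z_i^d$, so after the substitution $x_i=|z_i|^2$ the resulting $P$ may have no pure monomial at all, and your reduction to Theorem~\ref{thm:nonhomog}\,(iv) breaks down. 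The paper deals with exactly this obstruction by proving the separate Theorem~\ref{thm:eekbound2}, which needs no pure monomial and yields the much weaker value $C(n,N)=(n-1)(2N-3)$ (after the shift $N(P)=N+1$). Your reductions for \eqref{thm:CRbounds:i} and \eqref{thm:CRbounds:ii} are fine: for \eqref{thm:CRbounds:i} the underlying projective estimate is Theorem~\ref{maindim2thm}, which needs only indecomposability, so the pure-monomial issue never arises in dimension two.
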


Note that both conditions of monomial-indecomposable and linear independence of
the components are needed.
The value we obtain for $C(n,N)$ in item~\eqref{thm:CRbounds:iii} of
Theorem~\ref{thm:CRbounds} is far from optimal.  See
\S~\ref{section:higherdim} for the precise bound we proved.

We conjecture that the sharp bounds for $n=2$ and $n=3$ hold not only for
monomial, but also for all rational CR maps of hyperquadrics
that are indecomposable (into rational CR maps) for all $n$.
For $n \geq 3$ we conjecture
the sharp bound \eqref{wantedbound} holds for the same class of
functions.

{\emph{Acknowledgement: The authors would like to thank John D'Angelo for introducing this problem to us in 2005, and for his help and guidance since then. Part of the progress was booked during workshops at MSRI and AIM, the authors would like to thank both institutes.}}


\section{Projectivized problem} \label{section:projprob}

We will first formulate a projectivized version of the problem.  Take
a polynomial $p(x)$ such that $p(x) = 1$ on $s(x) = 1$.  Write $p(x) =
\sum_{k=0}^d p_k(x)$ for homogeneous polynomials $p_k$ of degree $k$.
Now homogenize
$p(x)-1$ and $s(x)-1$ to obtain that
\begin{equation}
\left(\sum_{k=0}^d p_k(x) \, t^{d-k} \right) - t^d
= 0
\qquad \text{whenever} \qquad
x_1 + x_2 + \cdots + x_n - t = 0 .
\end{equation}
We will generally replace $t$ with $-t$.  Then, changing notation, we
look at homogeneous polynomials
$P(X)$, $X \in \R^{n+1}$, such that
\begin{equation}
P(X) = 0 \qquad \text{on} \qquad S(X) := X_0 + X_1 + \cdots + X_n = 0 .
\end{equation}
This substitution
makes the problem more symmetric and easier to handle.  On the other hand,
in this formulation the condition that $p(x)$ has had only positive
coefficients is harder to see.  We therefore need a substitute.

\begin{defn} \label{defn:basicdefs}
Let $\sI = \sI(n)$ denote the set of homogeneous polynomials
$P \in \R[X_0,X_1,\ldots,X_n]$ such that $P(X) = 0$ on $S(X) = 0$.

We say $P(X)$ has \emph{p-degree} $d$ if $d$ is the smallest
integer such that there exists a monomial $X^\alpha$  (where $\alpha$ is a
multi-index)
and a polynomial $R(X)$ of degree $d$ such that $P(X) = X^\alpha R(X)$.
Note that if the monomials of $P$ have no common divisor then
we can take $\alpha = (0,\ldots,0)$ and the p-degree of $P$
is equal to the degree of $P$.
Let $\sI(n,d)$ denote the polynomials in $\sI(n)$ of p-degree $d$.

We will say that $P \in \sI$ is \emph{indecomposable} if we cannot write $P(X) =
P_1(X) + P_2(X)$, with $P_1,P_2 \in \sI$, such that $P_1$ and $P_2$ are two
nontrivial polynomials with no monomials in common.

We define $N(P)$ be the number of distinct monomials in $P$.
\end{defn}

\begin{defn}
Let $\sH(n,d)$ denote the set of polynomials $p \in \R[x_1,\ldots,x_n]$ of
degree $d$ that have only non-negative coefficients, and such that $p(x) = 1$
on $s(x) = x_1+\cdots+x_n = 1$.

Just as in the previous definition, we define $N(p)$ be the number of
distinct monomials in $p$.
\end{defn}

Suppose we start with $p \in \sH(n,d)$.  We homogenize with $t$ and then let
$t= - X_0$.  The induced polynomial $P(X) \in \sI$ is indecomposable.  Were it
decomposable, the monomial $X_0^d$ could belong only to one of the summands.
The other summand would then correspond (after changing $X_0$ back to $-t$)
to a polynomial with nonnegative coefficients that is zero on $s(x) = 1$.  It
is not hard to see that such a polynomial must be identically zero.

Let us state some obvious results.

\begin{prop}
{\ }
\begin{enumerate}[(i)]
\item The only indecomposable $P \in \sI(0)$ are
polynomials $c X_0^d$ for $d \geq 1$, in particular $N(P) = 1$ and
$\operatorname{p-degree}(P) = 0$.
\item There exist indecomposable $P \in \sI(1)$
with $N(P) = 2$ and arbitrarily large p-degree.
\end{enumerate}
\end{prop}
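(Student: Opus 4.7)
The plan is to verify each statement directly from Definition~\ref{defn:basicdefs}; no deep machinery is needed, and the only minor subtlety is a parity check in part (ii).

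For (i), observe that $\sI(0)$ consists of homogeneous polynomials in the single variable $X_0$, so every nonzero element has the form $P = c X_0^d$ for some $c \neq 0$ and $d \geq 0$. The vanishing condition $P(X) = 0$ on $S(X) = X_0 = 0$ forces $d \geq 1$. Such a $P$ has exactly one monomial, so $N(P) = 1$, and any putative decomposition $P = P_1 + P_2$ with disjoint monomial supports must assign that lone monomial entirely to one summand, rendering the other trivial; hence $P$ is indecomposable. Writing $P = X_0^d \cdot c$ with remaining factor $R(X) = c$ of degree $0$ then gives $\operatorname{p-degree}(P) = 0$.

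For (ii), I would exhibit the explicit family
\[
P_k(X_0, X_1) := X_0^k - (-1)^k X_1^k, \qquad k \geq 1,
\]
homogeneous of degree $k$ in the two variables $X_0, X_1$. Substituting $X_1 = -X_0$ yields $P_k(X_0, -X_0) = X_0^k - (-1)^k(-X_0)^k = X_0^k - X_0^k = 0$, so $P_k \in \sI(1)$; the factor $(-1)^k$ is inserted precisely so that the two terms cancel on $S = 0$ for every parity of $k$, and this parity bookkeeping is the only genuine calculation in the proposition. By inspection $N(P_k) = 2$, and the two monomials $X_0^k$ and $X_1^k$ share no common variable factor, so $\operatorname{p-degree}(P_k) = k$, which is arbitrarily large. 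For indecomposability, any decomposition of $P_k$ into summands with pairwise disjoint monomial supports must separate the two monomials, giving (up to signs) a summand of the form $P_1 = X_0^k$; but $X_0^k$ does not vanish identically on $\{X_0 + X_1 = 0\}$, so $P_1 \notin \sI(1)$ and no nontrivial decomposition of $P_k$ exists.

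Because both parts reduce to one-line checks, there is no real obstacle; I would simply present the family in (ii) and verify the three bullets (membership in $\sI(1)$, $N = 2$, and indecomposability) in the order above.
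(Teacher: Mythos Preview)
Your proposal is correct and takes essentially the same approach as the paper, which for part (ii) simply exhibits the example $P(X) = X_0^d + (-1)^{d+1} X_1^d$ without further elaboration; your $P_k = X_0^k - (-1)^k X_1^k$ is the same polynomial, and you have merely spelled out the verifications that the paper leaves implicit.
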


To construct examples for the second item, note that
$P(X) = X_0^d+(-1)^{d+1} X_1^d
\in \sI(1)$ is indecomposable, $N(P) = 2$, and p-degree is $d$.
Matters become more complicated for $n=2$.
We will prove the following theorem.

\begin{thm} \label{maindim2thm}
If $P \in \sI(2)$ is indecomposable and of p-degree $d$, then
\begin{equation}
d \leq 2 N(P)-5 .
\end{equation}
\end{thm}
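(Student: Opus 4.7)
The plan is to induct on the p-degree $d$, exploiting the factorization $P = S\cdot Q$ available because $S = X_0+X_1+X_2$ is irreducible and $P$ vanishes on $\{S=0\}$. First I would reduce to the case that $P$ has no common monomial factor among $X_0,X_1,X_2$; this reduction preserves $N(P)$, indecomposability, and the p-degree, and forces $\deg P = d$, so $P = SQ$ with $Q$ homogeneous of degree $d-1$. The base case $d=1$ is immediate: the only indecomposable elements of $\sI(2,1)$ are scalar multiples of $S$, which have $N = 3$ and satisfy $1 \le 2\cdot 3 - 5$ with equality.

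For the inductive step, I would analyze the Newton diagram of $P$, i.e.\ its support in the simplex $\{(a_0, a_1, a_2) \in \Z_{\ge 0}^3 : a_0 + a_1 + a_2 = d\}$, and perform a reduction that yields a smaller related polynomial in $\sI(2)$. The basic move is to pick an extremal monomial $X^\gamma$ of $P$ on the boundary of the diagram and subtract a scalar multiple $cSX^\beta$ (with $\beta = \gamma - e_i$ for a suitable index $i$) chosen to cancel $X^\gamma$. The resulting $\tilde P = P - cSX^\beta \in \sI(2)$ has at most $N-1$ monomials and p-degree at most $d$. By choosing the extremal vertex strategically, and exploiting the $S_3$-symmetry among $X_0, X_1, X_2$ afforded by the projective formulation, I aim to arrange that the p-degree drops by at least $2$, or at worst that $\tilde P$ splits into indecomposable summands in $\sI(2)$ to which the induction hypothesis applies with acceptable total degree. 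The induction then closes via $d - 2 \le 2(N-1) - 5$.

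The main obstacle is handling the indecomposability hypothesis in place of positivity of coefficients. In the DKR setting, positivity forbids coincidental cancellations in the product $SQ$ and forces a ``staircase'' structure on the Newton diagram; without positivity a single reduction step could fail to drop the degree by two, or could produce a $\tilde P$ whose indecomposable summands behave badly. The crux of the argument will be a dichotomy at each step: either some extremal reduction drops the p-degree by at least two while removing a single monomial, or every candidate reduction forces $P$ itself to decompose as a sum of two nontrivial disjoint-support elements of $\sI(2)$, contradicting the hypothesis. Carrying out this dichotomy requires a careful case analysis along each of the three edges of the Newton triangle, paralleling the combinatorial arguments of \cite{DKR} but adapted to the symmetric projective, indecomposable setting.
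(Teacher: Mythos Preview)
Your inductive scheme has a genuine gap at the reduction step.  You assert that $\tilde P = P - cSX^\beta$ has at most $N(P)-1$ monomials, but subtracting $cSX^\beta$ cancels the single monomial $X^\gamma = X_iX^\beta$ while simultaneously introducing (or modifying) the two monomials $X_jX^\beta$ for $j\neq i$.  If those two monomials were absent from $P$, then $N(\tilde P) = N(P)+1$, not $N(P)-1$.  Nothing in the indecomposability hypothesis forces those neighbors to be present in $P$; indecomposability translates to connectedness of the support of $Q = P/S$, not of $P$ itself, and a corner monomial of $P$ can easily sit with both neighbors absent.  So the basic bookkeeping of the induction does not close.

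Even setting that aside, the dichotomy you state---either p-degree drops by two, or $P$ decomposes---is the entire content of the theorem, and you have not indicated why it should hold.  In the decomposed case you would also need that the p-degrees of the summands add up to at least $d-2$ while their monomial counts add up to at most $N-1$; neither is automatic.

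The paper takes a different route.  It passes to the Newton diagram of $Q = P/S$ (recording only the signs of the coefficients of $Q$) and observes that each monomial of $P$ corresponds to a \emph{node} of this diagram, so $N(P)\ge \#(D)$.  Indecomposability of $P$ becomes connectedness of the support of $D$, and the p-degree becomes its size.  The key step (Lemma~\ref{filling}) is that one can fill in zero entries of $D$ level by level with alternating signs without increasing the node count; once the support is the full triangle, a direct count of sign changes across adjacent rows (Lemma~\ref{dkr-lemma}) gives $\#(D)\ge \frac{d+5}{2}$, which is exactly $d\le 2N(P)-5$.  This avoids any induction on $d$ and never manipulates $P$ directly.
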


Instead of directly proving this result, we will prove a stronger lemma,
which we will also need for $n=3$, and which will imply the result.  To state
this lemma, we need to first give the discrete geometric setup.

We define the homogeneous polynomial $Q$ by:
\begin{equation}
Q(X) = \frac{P(X)}{S(X)} .
\end{equation}
We will generally work with $Q$ rather than $P$.  In fact, we will even
forget the sizes of the coefficients of $Q$ and deal only with the signs
in a so-called Newton diagram.

The condition of indecomposability will turn out to be a connectedness
condition on the Newton diagram, when the problem is translated into a
problem in discrete geometry below.  The p-degree will then be the diameter of
the set of nonzero signs in the Newton diagram.

\begin{defn}
For $m \in \Z^n$, we will write $\abs{m} = m_1 + m_2 + \cdots + m_n$.

Let $P = S \cdot Q$ as above, where $P$ is of degree $d$.
Let $\gamma \colon \Z^n \to \Z^{n+1}$
be the map
\begin{equation}
\gamma(m) := \left(d-1-\abs{m}, ~ m \right) .
\end{equation}
We define the function $D \colon \Z^n \to \{ 0, P, N \}$ as follows.
Let $C_{\gamma(m)}$ be the coefficient of $X^{\gamma(m)}$ in $Q(X)$
when we treat $\gamma(m)$ as a multi-index.  Then
\begin{equation}
D(m) :=
\begin{cases}
P & \text{ if $C_{\gamma(m)} > 0$, } \\
0 & \text{ if $C_{\gamma(m)} = 0$, } \\
N & \text{ if $C_{\gamma(m)} < 0$. }
\end{cases}
\end{equation}
We call $D$ the \emph{Newton diagram} of $Q$, and we will say that
$D$ is the Newton diagram corresponding to $P$.

We will call the $m \in \Z^n$ points of $D$, and we will call
$m$ a $0$-point if $D(m)=0$, a $P$-point if $D(m) = P$ and
an $N$-point if $D(m) = N$.  We say that the monomial $X^{\gamma(m)}$
is the monomial associated to $m \in \Z^n$ and vice-versa.
We will often identify points of $\Z^n$ with the associated monomials.
\end{defn}

Therefore, the Newton diagram of $Q$ is an array of $P$s, $N$s, and $0$s,
one for each coefficient of $Q$.
For example when $n=2$,
the monomial $C_{abc} X_0^a X_1^b X_2^c$
corresponds to the point with coordinates $(b,c)$.  In the Newton diagram we
let $D(b,c) = 0$ if $C_{abc} = 0$, we let $D(b,c) = P$ if $C_{abc} >
0$, and we let $D(b,c) = N$ if $C_{abc} < 0$.  Note that we include negative
powers in the Newton diagram, even though $D(m) = 0$ any time $m$ is not in
the positive quadrant.

We will generally ignore points $m \in \Z^n$ where $D(m) = 0$.  We can give
graphical representation of $D$ by drawing a lattice, and then
drawing the values of $D$ in the lattice.  For convenience,
when drawing the $n=2$
lattice, we put $(0,0)$ at the origin, and then let $(0,1)$ be directed
at angle $\frac{\pi}{3}$ and $(1,0)$ at angle $\frac{2\pi}{3}$.  Similarly we depict
the diagram for $n=3$.
In the figures we will use light color circles and spheres for for positive
coefficients and dark color circles and spheres for negative coefficients.
We will not draw the circles and spheres corresponding to the zero
coefficients.
See Figure~\ref{fig:newton} for sample diagrams.

\begin{figure}[h!t]
\begin{center}
\begin{minipage}[b]{0.49\linewidth}
\centering
\includegraphics{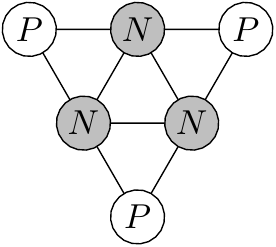}
\end{minipage}
\begin{minipage}[b]{0.49\linewidth}
\centering
\includegraphics{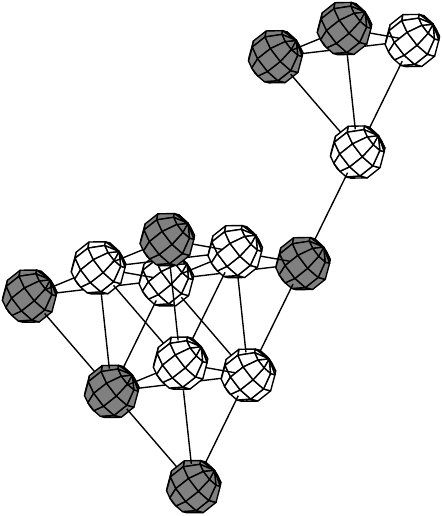}
\end{minipage}
\caption{Newton diagram for $n=2$ for
$Q(X)=X_0^2+X_1^2+X_2^2
- X_0 X_1 - X_1 X_2 - X_0 X_2$ (on left)
and a sample diagram for $n=3$ (on right).\label{fig:newton}}
\end{center}
\end{figure}

\begin{defn}
We say two points in the Newton diagram are \emph{adjacent} if the
corresponding monomials $X^\alpha$ and $X^\beta$ are such that
there exist $k$ and $j$ such that $X_j X^\alpha = X_k X^\beta$.
Between any two points (or any two monomials)
we can define the distance
$\operatorname{dist}(X^\alpha,X^\beta)$ as 1 if the corresponding
monomials are adjacent, and if they are not adjacent as the length
of the
shortest path along adjacent monomials.

The {\emph{support}} $K \subset \Z^n$ of a Newton diagram $D$ is the set
of nonzero points.  That is, $K = D^{-1}(\{ P, N \})$.  As $K$ cannot
contain any negative coordinates, we will generally consider
$K$ a subset of $\N_0^n$ (where $\N_0 = \{ 0 \} \cup \N$).

We say that the support $K$ of a Newton diagram $D$ is \emph{connected} if
any two points of $K$ are joined by a path of
adjacent points of $K$.

For a set $K \subset \Z^n$, we let $\widehat{K} \subset \Z^n$ be the smallest simplex that contains $K$. More precisely, $\widehat{K} \subset \Z^n$ is the smallest
set such that $K \subset \widehat{K}$ and for some
$a = (a_1,\ldots,a_n) \in \Z^n$
and some $k \in \N$ we have
\begin{equation}
\widehat{K} = \{x \in \Z^n \mid x_j \geq a_j \;, \; \abs{x} \leq k\}.
\end{equation}
When $K \subset \N_0^n$ then the diameter of $\widehat{K}$ in the distance defined above
is easily seen to be $k-\abs{a}$.  We define the \emph{size} of $K$ as the diameter of
$\widehat{K}$ plus one.
\end{defn}

The adjacency between points in the figures is marked by drawing a line
connecting the two circles or spheres. If $P \in \sI(n)$, then
we will show that p-degree of $P$ is equal to the size of $K$ where $K$ is
the support of the Newton diagram corresponding to $P$.

\begin{defn}
Let $D \colon \Z^n \to \{ 0, P, N \}$ be a Newton diagram.
Let $X^\alpha$ be a monomial of degree $d$.  Let $E = E(\alpha)$
be the set of points in $\Z^n$ defined by
\begin{equation}
E := \{ m \in \Z^n \mid X_k X^{\gamma(m)} = X^\alpha \text{ for some $k$} \} .
\end{equation}
We call $E$
a \emph{node} if the direct image
$D(E) = \{ P \}$, $D(E) = \{ N \}$,
$D(E) = \{ 0, P \}$, or $D(E) = \{ 0, N \}$.
Do note that we allow negative indices in $E$.
For a diagram $D$ let
\begin{equation}
\#(D) := \text{(number of nodes in $D$)} .
\end{equation}
\end{defn}

See Figure~\ref{fig:newtonnodes} for examples of diagrams with nodes marked.
In dimension 2, the nodes are marked with a triangle where each vertex of the
triangle touches a point of the node.  In dimension 3 we mark the nodes
similarly with a simplex.

\begin{figure}[h!t]
\begin{center}
\begin{minipage}[b]{0.39\linewidth}
\centering
\includegraphics{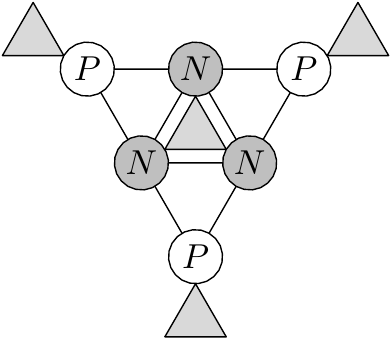}
\end{minipage}
\begin{minipage}[b]{0.59\linewidth}
\centering
\includegraphics{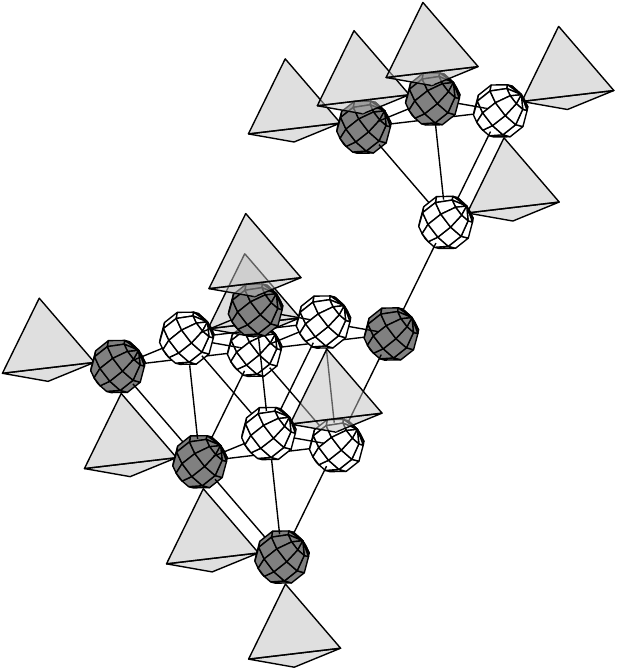}
\end{minipage}
\caption{Nodes in Newton diagrams for $n=2$ (on left) and
$n=3$ (on right).\label{fig:newtonnodes}}
\end{center}
\end{figure}

The number of nodes $\#(D)$ functions as a lower bound for the number of
non-zero coefficients for a polynomial in $\sH(n,d)$, or rather, as a
generalization thereof. Let $p \in \sH(n,d)$ and let $P$ be the induced
homogeneous polynomial with corresponding Newton diagram $D$. If the diagram
$D$ has a node at the points of the form $\frac{X_0}{X_j} X^\alpha$, with
$\alpha = (\alpha_0, \alpha^\prime)$,  then it is easy to see that for the
polynomial $p$, the coefficient of the monomial $x^{\alpha^\prime}$ cannot be
zero. The non-negativity of the coefficients of $p$ puts a restriction on the
kind of nodes that can occur in each mutually connected collection of nonzero
points,
but as we are also interested in allowing coefficients of the polynomials $p$
to be negative we will ignore this restriction as much as possible.

In two dimensions it will be sufficient to obtain only the very mild
restrictions on the support that are given by the next two lemmas.

\begin{lemma}\label{lemma:connectedness1}
Let $P \in \sI(n)$.  Write
$P = S \cdot Q$.
Let $D$ be the Newton diagram of $Q$, with support $K$.
\begin{enumerate}[(i)]
\item
If $P$ is indecomposable, then $K$ is connected.
\item
The size of $K$ is equal to the p-degree of $P$.
\end{enumerate}
\end{lemma}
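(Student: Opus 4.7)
The plan is to treat the two parts separately; both amount to bookkeeping about which monomials of $P = S \cdot Q$ arise from which monomials of $Q$, using that $S$ is the sum of the coordinate variables.

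For (i) I would argue the contrapositive: a disconnected support $K$ produces a nontrivial decomposition of $P$. Suppose $K = K_1 \sqcup K_2$ with $K_1$ a connected component of $K$ and $K_2$ its nonempty complement, so that no point of $K_1$ is adjacent to any point of $K_2$. Split $Q = Q_1 + Q_2$ by grouping its monomials according to this partition, and set $P_i := S \cdot Q_i$. Each $P_i$ is a nonzero multiple of $S$, hence nontrivial and in $\sI$. The essential claim is that $P_1$ and $P_2$ share no monomial: if some $X^\beta$ had nonzero coefficient in both, then inspecting $S Q_i = \sum_k X_k Q_i$ would force the existence of some $(m_1, j_1)$ with $m_1 \in K_1$ and some $(m_2, j_2)$ with $m_2 \in K_2$ satisfying $X_{j_1} X^{\gamma(m_1)} = X^\beta = X_{j_2} X^{\gamma(m_2)}$. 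This equation is precisely adjacency of $m_1$ and $m_2$, contradicting the separation of $K_1$ and $K_2$.

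For (ii) I would compute the greatest common divisor of the monomials of $P$ explicitly and match it against the simplex $\widehat{K}$. Every monomial of $P$ has the form $X_k X^{\gamma(m)}$ for $m \in K$ and $k \in \{0, 1, \ldots, n\}$, and since $n \geq 1$ there is always some index $k \neq j$ available, so the minimum exponent of $X_j$ over all monomials of $P$ is attained with $\delta_{jk} = 0$. This yields GCD-exponents $\alpha_0 = d - 1 - \max_{m \in K} \abs{m}$ and $\alpha_j = \min_{m \in K} m_j$ for $j \geq 1$. Hence the p-degree of $P$ equals
\begin{equation*}
d - \abs{\alpha} = 1 + \max_{m \in K} \abs{m} - \sum_{j=1}^{n} \min_{m \in K} m_j,
\end{equation*}
which is precisely the size of $K$: the defining data for $\widehat{K}$ is $a_j = \min_{m \in K} m_j$ and $k = \max_{m \in K} \abs{m}$, giving diameter $k - \abs{a}$ and size $k - \abs{a} + 1$, the same expression.

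The main subtlety, more a fine point than a genuine obstacle, is justifying the appeal to cancellations in (i): a monomial $X^\beta$ can appear with nonzero coefficient in $S Q_i$ only if at least one pair $(m, j)$ with $m \in K_i$ and $X_j X^{\gamma(m)} = X^\beta$ contributes, and the existence of one such pair on each side already forces the adjacency contradiction, regardless of how other contributions may combine.
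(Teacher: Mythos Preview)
Your argument for (i) is correct and matches the paper's proof: both take the contrapositive, split $Q$ along the connected components of $K$, and observe that a shared monomial between $SQ_1$ and $SQ_2$ would witness an adjacency between a point of $K_1$ and a point of $K_2$. Your final paragraph on cancellations is exactly the right justification.

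For (ii) your approach is more explicit than the paper's (which argues tersely via divisibility of $Q$ by powers of each $X_j$), and your final formula is correct. There is, however, a small gap. You assert that the minimum $X_j$-exponent over the monomials of $P$ equals $\min_{m\in K} m_j$, but your argument only shows that every monomial of $P$ lies in the set $\{X_k X^{\gamma(m)} : m\in K,\ 0\le k\le n\}$, which gives the inequality $\alpha_j \ge \min_{m\in K} m_j$. Because cancellations can occur when forming $SQ$, the monomials of $P$ are in general a proper subset of that set, so the reverse inequality is not automatic from what you wrote. The fix is a single line: since $S$ is irreducible and not divisible by any $X_j$, the relation $X_j^{c}\mid P=SQ$ forces $X_j^{c}\mid Q$, hence $c\le \min_{m\in K} m_j$; the analogous statement for $X_0$ gives $\alpha_0 = d-1-\max_{m\in K}|m|$. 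Equivalently, the monomial GCD of $P$ coincides with that of $Q$, and for $Q$ there is no cancellation issue. With this added, your identification of the p-degree with $1+\max_{m\in K}|m|-\sum_{j=1}^n \min_{m\in K} m_j$, and hence with the size of $K$, is complete.
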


\begin{proof}
Let us prove the first item.
Suppose that $K$ is disconnected.  Then take $K_1,\ldots,K_r$
be the connected components.  Define the $Q_j$ by taking those
monomials of $Q$ that are in $K_j$.  As if $j\not=k$,
then no points in $K_k$ is adjacent to any point of $K_j$.
Therefore $Q_k S$ has no monomials in common with $Q_j S$.
Thus define $P_j = Q_j S$.  We see that $P = P_1 + \cdots + P_r$
and $P$ is decomposable.

Now let us prove the second item.
By taking $a = (0,\ldots,0)$ in the definition of $\widehat{K}$ and
the size of $K$,
it is not hard to see that the size of $K$ is at least the p-degree of
$P$.  Suppose that $\widehat{K}$ does not contain any points $m \in \Z^n$
with $m_1 \leq k$ for some $k$.  Then $Q$ is divisible by $X_1^{k+1}$.  The
result follows by iterating through all the variables.
\end{proof}

\begin{lemma}\label{lemma:connectedness}
Let $p \in \sH(n,d)$, let $P = S \cdot Q$ be the induced homogeneous
polynomial and $D$ the Newton diagram of $Q$, with support $K$. Then $K$ is
connected, contains $(0, \ldots, 0)$ and is of size $d$.
\end{lemma}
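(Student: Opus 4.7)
The plan is to address the three conclusions—connectedness of $K$, containment of the origin, and size equal to $d$—separately, reducing each either to the previous lemma or to a one-line coefficient computation.

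Connectedness is immediate: by the observation made right after Definition~\ref{defn:basicdefs}, any $p \in \sH(n,d)$ induces an indecomposable $P \in \sI$, so Lemma~\ref{lemma:connectedness1}(i) applies. The claim that $K$ has size $d$ is then reduced by Lemma~\ref{lemma:connectedness1}(ii) to checking that $\operatorname{p-degree}(P) = d$. Since $\deg P = d$, this amounts to showing that no single variable $X_j$ divides every monomial of $P$. For $j = 0$ this will follow because $\deg p = d$ forces some degree-$d$ term in $p$, which homogenizes to a pure monomial in $X_1, \ldots, X_n$. For $j \geq 1$ it will follow from the non-vanishing of the coefficient of $X_0^d$ in $P$, which a direct substitution shows equals $(-1)^d(p_0 - 1)$.

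The containment $(0, \ldots, 0) \in K$ is exactly the statement that this same coefficient is non-zero, because via $P = S \cdot Q$ the coefficient of $X_0^d$ in $P$ equals the coefficient of $X_0^{d-1}$ in $Q$, and that is precisely what the Newton diagram of $Q$ records at $(0,\ldots,0)$. So the entire lemma hinges on a single numerical inequality, namely $p_0 \neq 1$, or more sharply $p_0 < 1$.

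I expect $p_0 < 1$ to be the main (and essentially the only) obstacle, and I would prove it by evaluating $p$ at any strictly positive point of the simplex, for instance the barycenter $x^{*} = (1/n, \ldots, 1/n)$. Since $\deg p \geq 1$ and the coefficients of $p$ are non-negative, some $c_{\beta}$ with $\beta \neq 0$ is strictly positive, and because $(x^{*})^{\beta} > 0$ we get
\[
1 = p(x^{*}) = p_{0} + \sum_{\beta \neq 0} c_{\beta} (x^{*})^{\beta} > p_{0}.
\]
Once this inequality is in hand, both of the remaining claims follow immediately from the coefficient computation described above, finishing the proof.
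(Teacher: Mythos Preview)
Your proof is correct and follows essentially the same route as the paper's: both reduce connectedness to indecomposability via Lemma~\ref{lemma:connectedness1}, and both identify $p_0 \neq 1$ as the key fact behind $(0,\ldots,0)\in K$ and the size computation. The only cosmetic differences are that you establish $p_0 < 1$ by an explicit evaluation at the barycenter (the paper instead argues by contradiction, reusing the ``non-negative polynomial vanishing on $s=1$ must be zero'' observation), and you route the size claim through the p-degree and Lemma~\ref{lemma:connectedness1}(ii) rather than arguing directly from $(0,\ldots,0)\in K$ together with a top-level point of $K$.
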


\begin{proof}
First we note that $P$ is indecomposable.  Suppose that $P$ were
decomposable.
That would in fact imply that there would exist two nontrivial
polynomials $p_1$
and $p_2$ with positive coefficients such that $p_1 + p_2 = p$,
$p_1$ and $p_2$ are constant on the hyperplane $s=1$, and have no monomials
in common.  In fact, the constant term in $p-1$ can only be in one of
$p_1$ or $p_2$ and hence one of them is zero on the hyperplane.  So suppose
that $p_2 = 0$ on $s=1$.  As all coefficients of $p_2$ are positive,
then they are all zero.  Hence $p_2 \equiv 0$.  Therefore $P$ is
indecomposable.

Thus by Lemma~\ref{lemma:connectedness1}, $K$ is connected.  If we let
$q = \frac{p-1}{s-1}$.  If $p(0) = 1$, then we obtain a contradiction
as above, $p-1$ would have all positive coefficients and be
zero on $s=1$.  Therefore $p(0) \not= 1$.  Then it is not hard to see
that $q$ must contain a nonzero constant.  Therefore so does $Q$ and
hence $(0,\ldots,0) \in K$.

Once $(0,\ldots,0) \in K$, we see that the size of $K$ cannot be any smaller
than $d$.  Since for all points $m \in K$ we have $\abs{m} < d$, then
$K$ is of size $d$.
\end{proof}

In fact,
in two dimensions we will be able to prove the estimate on $\#(D)$
without requiring that $(0,0) \in K$.  Compare
Theorem~\ref{maindim2newtonthm} and Proposition~\ref{twodim}.

Our proof in three dimensions requires stronger restrictions on $K$. We make
the following definition.

\begin{defn}
A set $K \subset \Z^2$ has \emph{left overhang} if there exists
a point $(a,b) \in K$, $(a,b) \not= (0,0)$ such that
$(a,b-1) \notin K$ and $(a-1,y) \notin K$ for all $y \geq b$.

A point $(a,b) \in K$ is \emph{right overhang} if it is a left overhang
after swapping variables.  We say simply that $(a,b)$ is an \emph{overhang}
if it is a left or a right overhang.

Let $K \subset \Z^3$.
Define $\pi_j \colon \Z^3 \to \Z^2$, $j=1,2,3$, by
$\pi_1(m) = (m_1,m_2+m_3)$,
$\pi_2(m) = (m_1+m_3,m_2)$, and
$\pi_3(m) = (m_1+m_2,m_3)$.  Then
we say that
$m \in K$ is an overhang if $\pi_j(m)$ is an overhang of
$\pi_j(K)$ for some $j = 1,2,3$.

When no overhang exists in
$K \subset \Z^n$ we say that $K$ satisfies the \emph{no-overhang}
condition.  We will sometimes say that a Newton diagram $D$ satisfies
the no-overhang condition,
which means that the support $K$ of $D$ has no overhang.  Similarly,
when we say that $P \in \sI(n)$ satisfies the no-overhang condition
we mean that the support of the corresponding Newton diagram
satisfies the no-overhang condition.
\end{defn}

The projection $\pi_1$ is equivalent to setting the 2nd and 3rd variable
equal to each other in the underlying polynomial.  Similarly for $\pi_2$ and
$\pi_3$.
The Figure~\ref{fig:noover} illustrates a diagram with an overhang.  The
overhang is marked with a circle in the figure.  The second view is along one
axis signifying the projection of the support by a $\pi_j$ for some $j$.
The overhang in 2 dimensions is then easy to see.

\begin{figure}[h!t]
\begin{center}
\includegraphics{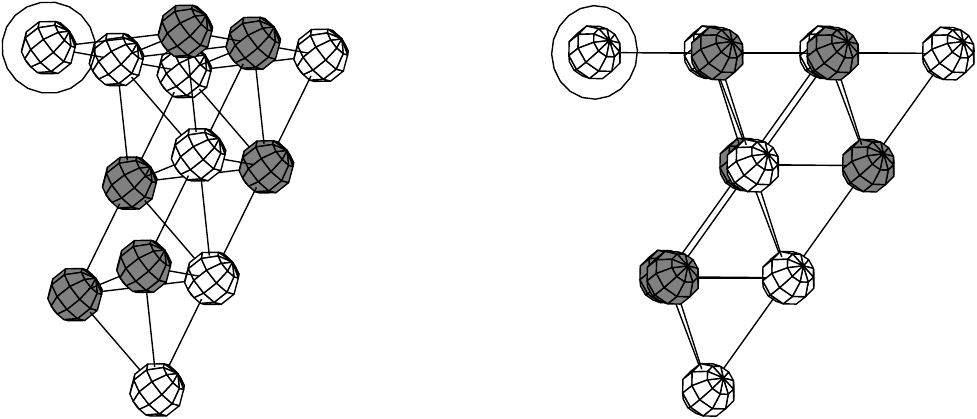}
\caption{Two views of a diagram violating the \emph{no-overhang}
condition.\label{fig:noover}}
\end{center}
\end{figure}

\begin{lemma}\label{lemma:overhang}
Let $p \in \sH(n,d)$, $2 \leq n \leq 3$, and $P, Q, D, K$ be as in
Lemma~\ref{lemma:connectedness}.  Then $K$ has no overhang.
\end{lemma}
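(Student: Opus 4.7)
The plan is to distill the non-negativity of the coefficients of $p$ into a single pointwise inequality on the coefficients of the polynomial $q(x) := (p(x) - 1)/(s(x) - 1)$, and then to propagate a strict sign of $q$ along a ray until we must leave its support, contradicting that $q$ has bounded degree. I would first note that $q$ is a polynomial (since $p - 1$ vanishes on $s = 1$) whose coefficient array $q_\mu := [x^\mu] q$ is nonzero exactly on $K$, and that expanding $p = 1 + (s - 1) q$ gives, for each $\mu \in \N_0^n$ with $\mu \neq 0$,
\begin{equation} \label{keyineq}
p_\mu = \sum_{k=1}^n q_{\mu - e_k} - q_\mu ,
\end{equation}
with the convention $q_\nu = 0$ if $\nu$ has a negative coordinate. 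The non-negativity $p_\mu \geq 0$ then yields the key inequality $q_\mu \leq \sum_{k=1}^n q_{\mu - e_k}$.

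For $n = 2$ I proceed by contradiction: suppose $(a, b) \in K$ is a left overhang (the right-overhang case is the mirror image, inducting in the $a$-direction). The overhang hypothesis gives $q_{(a, b-1)} = 0$ and $q_{(a-1, y)} = 0$ for every $y \geq b$, so the key inequality at $\mu = (a, b)$ forces $q_{(a, b)} \leq 0$ and hence $q_{(a, b)} < 0$. Iterating, the key inequality at $(a, b+k)$ reduces to $q_{(a, b+k)} \leq q_{(a, b+k-1)}$, so $q_{(a, b+k)}$ remains strictly negative for every $k \geq 0$. This contradicts $q_{(a, b+k)} = 0$ as soon as $k > d - 1 - a - b$.

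For $n = 3$ the main obstacle is that an overhang is defined through the set-image $\pi_j(K)$, so cancellations inside a single fiber could in principle block the direct analogue of the previous argument. I would bypass this by summing the key inequality along fibers. By the symmetry of \eqref{keyineq} under permutations of $e_1, e_2, e_3$ and by the symmetry of the overhang definition, it suffices to treat a left overhang of $\pi_1(K)$ at $(a, b) = \pi_1(m)$; the five other cases (the two other projections and the right-overhang versions) follow by the same argument with the direction of induction and the multiplicative constant possibly different. Set $\bar q_{(a', b')} := \sum_{i + j = b'} q_{(a', i, j)}$. The overhang hypotheses translate into $q_{(a, i, j)} = 0$ for $i + j = b - 1$ and $q_{(a - 1, i, j)} = 0$ for $i + j \geq b$. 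Applying \eqref{keyineq} at each $\mu = (a, i, j)$ with $i + j = b$ makes all three right-hand terms vanish, so $q_{(a, i, j)} \leq 0$; since $(a, b) \in \pi_1(K)$ at least one such coefficient is nonzero and hence strictly negative, giving $\bar q_{(a, b)} < 0$. For $k \geq 1$, at each $\mu = (a, i, j)$ with $i + j = b + k$ the term $q_{(a-1, i, j)}$ vanishes, so \eqref{keyineq} reduces to $q_{(a, i, j)} \leq q_{(a, i-1, j)} + q_{(a, i, j-1)}$; summing over the fiber and reindexing yields $\bar q_{(a, b+k)} \leq 2 \bar q_{(a, b+k-1)}$. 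Iterating shows $\bar q_{(a, b+k)} < 0$ for all $k \geq 0$, which contradicts $\bar q_{(a, b+k)} = 0$ as soon as $a + b + k > d - 1$.
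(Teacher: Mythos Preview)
Your proof is correct. For $n=2$ it is essentially identical to the paper's argument: both unfold $p = 1 + (s-1)q$, read off the inequality $q_\mu \leq \sum_k q_{\mu - e_k}$ from non-negativity, and chase a strictly negative coefficient of $q$ up a column until it escapes the support.

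For $n=3$ you take a genuinely different route. The paper reduces to the two-variable case by the substitution $\tilde p(x_1,x_2) = p(x_1,\, t x_2,\, (1-t)x_2)$ for a generically chosen $t \in (0,1)$, observing that $\tilde p \in \sH(2,d)$, that the support $\tilde K$ of the associated $\tilde q$ sits inside $\pi_1(K)$, and that $t$ can be chosen so that the overhang point survives in $\tilde K$; then the $n=2$ case applies. You instead stay in three variables and sum the key inequality along the fibers of $\pi_1$, obtaining the recursion $\bar q_{(a,b+k)} \leq 2\,\bar q_{(a,b+k-1)}$ and pushing the strict negativity of $\bar q_{(a,b)}$ out to infinity. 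The two approaches are closely related (your $\bar q_{(a,b')}$ is, up to the positive factor $2^{-b'}$, the paper's $\tilde q$-coefficient at $t=\tfrac12$), but yours is more self-contained: you replace the genericity step ``pick $t$ so that the coefficient is nonzero'' by the direct observation that every $q_{(a,i,j)}$ on the base fiber is $\leq 0$ and at least one is strictly negative. The paper's reduction is more modular and would port immediately if one ever wanted analogous statements in higher $n$; your argument makes the sign-propagation mechanism fully explicit and removes any appeal to a generic choice.
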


\begin{proof}
We start with $n=2$.  Let $p \in \sH(2,d)$.  Suppose that $(a,b) \in K$
be an overhang.  Without loss of generality assume $(a,b)$ is
a left overhang.  Since $D(a-1,b) = D(a,b-1) = 0$, then we have a node
and $p$ contains the monomial $x_1^a x_2^b$.  As $(a,b) \not= (0,0)$,
the coefficient of $x_1^a x_2^b$ has to be positive.  Let $q =
\frac{p-1}{s-1}$, the coefficient of $x_1^a x_2^b$ must be negative.  Because
we are multiplying $q$ by $s-1 = x_1 + x_2 - 1$, we note that the coefficient
of $x_1^a x_2^{b+1}$ in $q$ must be negative as well, otherwise we would get
a negative coefficient of $x_1^a x_2^{b+1}$ in $p$.  But that means
that the coefficient of
$x_1^a x_2^{k}$ in $q$ for all $k \geq b$ must be negative.  But $q$ is a
polynomial and hence for some $k$, the coefficient is zero.  Then we get
a negative term in $p$ which is a contradiction.
Therefore the lemma holds for $n=2$.

Now let $n=3$.  Let $p \in \sH(3,d)$.
Suppose for contradiction that $(a,b,c) \in K$ is an overhang.
Without loss of generality suppose that $\pi_1(a,b,c) = (a,b+c)$ is
an overhang of $\pi_1(K)$.
For some $t \in (0,1)$, let $\tilde{p}(x_1,x_2) = p(x_1,tx_2,(1-t)x_2)$.
Define $\tilde{q} = \frac{\tilde{p}-1}{s-1}$.
We can pick $t$ such that
the coefficient of $x_1^a x_2^{b+c}$ in $\tilde{q}$ is nonzero,
which we can certainly do.
Let if we homogenize $\tilde{p}$ to
create $\tilde{P}$, find $\tilde{Q}$ and the Newton diagram $\tilde{D}$,
then if $\tilde{K}$ is the support of $\tilde{D}$, then
it is easy to see that $\tilde{K} \subset \pi_1(K)$.
Furthermore, $(a,b+c) \in \tilde{K}$.
If $(a,b+c)$ was an overhang of $\pi_1(K)$,
then it is an overhang of $\tilde{K}$.  The fact that
$\tilde{p} \in \sH(2,d)$ leads to a contradiction.
\end{proof}

In the next two sections we will often refer to directions of a support $K$,
imagining the $(0, \ldots, 0)$ point at the {\emph{bottom}} and the points
$m$ with $\abs{m} = d-1$ at the \emph{top}.  Note that points
at the top correspond to monomials of degree $d-1$ in nonhomogeneous
coordinates obtained by setting $X_0 = 1$.
Similarly by
{\emph{up}} (resp.\@ {\emph{down}})
we will mean the direction of increasing
(resp.\@ decreasing) $\abs{m}$ (that is, degree in nonhomogeneous
coordinates).  By a {\emph{side}} of $K$ we mean a collection of
points
for which a fixed coordinate is equal to zero. In the two dimensional case we
will use {\emph{left}} and {\emph{right}} without worrying about which is
which.  By the $k$th \emph{level} we will mean points $m$ where $\abs{m} =
m_1 + \cdots + m_n = k$.


\section{Two-dimensional results}\label{twodimensions}

Let us start by recalling some ideas of the proof of the $2$-dimensional
result by D'Angelo, Kos, and Riehl. We start with a polynomial $p(x,y)$ with
non-negative coefficients that satisfies $p = 1$ when $s = x+y = 1$. We
assume that $p$ is of degree $d$ and we wish to estimate from below the
number of non-negative coefficients of $p$.

D'Angelo, Kos and Riehl then write $p = (s-1)q + 1$ for some polynomial $q$,
but instead we will use the induced homogeneous polynomial $P$ and the
notation $P = S \cdot Q$ as was introduced in the previous section. We obtain
the Newton diagram $D$  of $Q$ and as we discussed in the previous section,
the number of nodes of $D$ is a lower estimate for the number of positive
coefficients of $p$. The positivity of $p$ puts strong restrictions on the
diagram $D$, but for now we only require that the support $K$ of $D$ is
connected.  We will at first require that
$(0,0) \in K$ but we will remove this restriction later.

By manipulating the Newton diagram $D$ without increasing the number of
nodes, D'Angelo, Kos and Riehl were able to show that the number of nodes of
$D$ must be at least $\frac{d+5}{2}$. Since the node involving the points
$(0, 0)$, $(0,-1)$, and $(-1,0)$ does not correspond to a
positive coefficient of $p$ but all other nodes do, we see that $p$ must have
at least $\frac{d+3}{2}$ positive coefficients. Considering that generally
many of the coefficients of the polynomial $p$ are ignored when counting
nodes, it is truly remarkable that this argument gives the sharp bound for
$N(p)$.

Their method is the following: they make small
local changes to $D$ such that after every step the number of nodes has not
increased and the lowest and left most node has moved to the right or up.
After a finite number of steps all nodes must be at the top level and it is
then easy to give a lower estimate on the number of nodes.

Instead of following their argument we give a slightly different proof. The
reason is that the above argument cannot possibly work in dimensions $3$ and
higher. It is not hard to see that if all nodes are at the top level then the
estimate on the number of nodes would be at least quadratic in $d$. But since
it is easy to give examples where the number of coefficients of $p$ (and thus
also the number of nodes of $D$) is at most linear in $d$, it cannot always be
possible to push the nodes to the top level without increasing their number.
Fortunately the argument that we will give here is also a bit simpler than
the original proof.

Instead of manipulating $D$ to remove the lowest nodes, we will repeatedly
eliminate the lowest zero, and end up with a Newton diagram that consists
entirely of $P$s and $N$s.

\begin{prop}\label{twodim}
Let $D$ be a Newton diagram with support $K$ in two variables.
Suppose that $K$ is of size $d$,
is
connected, and contains $(0,0)$. Then
\begin{equation}
\#(D) \geq \frac{d+5}{2}.
\end{equation}
\end{prop}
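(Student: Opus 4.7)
The plan is to follow the strategy sketched in the text preceding the proposition: iteratively fill in the zeros of $D$ without increasing $\#(D)$, thereby reducing to a Newton diagram with support equal to $\widehat{K}$, and then count nodes directly in the resulting full triangular diagram.

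For the filling step, suppose $m \in \widehat{K}$ has $D(m) = 0$; choose $m$ at the smallest level $|m|$, breaking ties by the smallest $m_1$. The point $m$ lies in at most three nodes: a top-type node $E_T$ where $m = (\alpha_1,\alpha_2)$ is the high vertex and the other two vertices $(m_1-1,m_2)$, $(m_1,m_2-1)$ are at level $|m|-1$, together with two side-type nodes $E_L, E_R$ in which $m$ is a lower vertex. Flipping $D(m)$ from $0$ to $s \in \{P,N\}$ creates a node only when the other two vertices of a node containing $m$ are both $0$, and destroys a node only when the other two vertices lie uniformly in $\{0,-s\}$. By the minimality of $m$, the two non-$m$ vertices of $E_T$ already lie in $K$ and carry definite signs, which rules out the creation scenario for $E_T$. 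A case analysis on the sign configurations of the non-$m$ vertices of $E_L$ and $E_R$ then shows that at least one of $s = P$ or $s = N$ results in no net increase in $\#(D)$.

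Once $K = \widehat{K}$, I would establish the bound by induction on $d$. The base case $d = 1$ has $K = \{(0,0)\}$ and exactly $3$ corner nodes, matching $\tfrac{1+5}{2} = 3$. For the inductive step, peel off the top row (the $d$ lattice points at level $d-1$); the remaining diagram is a full size-$(d-1)$ triangle satisfying the hypotheses, so by induction it contributes at least $\tfrac{d+4}{2}$ nodes whose vertices lie at levels $\le d-2$. The top row introduces two new corner nodes corresponding to $\alpha = (0,d,0)$ and $\alpha = (0,0,d)$, together with $d-1$ top-edge nodes (one for each adjacent pair of same-sign top-row points) and $d$ down-triangles straddling the top two rows. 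A pigeonhole observation on the $\pm 1$ pattern along the top row shows that the top row contributes on average at least $\tfrac{1}{2}$ additional node beyond the two new corners, closing the induction.

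The principal obstacle is the filling step, and more precisely the case analysis for the side-type nodes $E_L, E_R$, which are less constrained by the minimality of $m$ than $E_T$ because their non-$m$ vertices can lie at level $|m|$ or $|m|+1$ and may themselves be zero. The delicate point is verifying that, in every configuration compatible with $m$ being the lowest-leftmost zero, at least one sign choice for $D(m)$ avoids a strict increase in $\#(D)$; the counting step in the full triangular case is essentially bookkeeping once this reduction is accomplished.
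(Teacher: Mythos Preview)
Your overall strategy---reduce to a maximally supported triangle by filling zeros, then count---matches the paper's. But the filling step as you describe it has a genuine gap.

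The claim that ``at least one of $s=P$ or $s=N$ results in no net increase'' fails for your lowest-leftmost ordering. Take $K = \{(0,0),(1,0),(2,0),(3,0)\}$, all with sign $P$, so $d=4$. At level~$1$ the unique zero is $(0,1)$; your analysis correctly fills it with $N$ (net $-1$). At level~$2$ the zeros are $(0,2)$ and $(1,1)$, and you pick $m=(0,2)$. Now both non-$m$ vertices of $E_R$ (namely $(-1,3)$ and $(0,3)$) are zero, and both non-$m$ vertices of $E_L$ (namely $(1,1)$ and $(1,2)$) are zero, so \emph{both} side nodes are created regardless of $s$. Meanwhile $E_T$ can destroy at most one node. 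Both sign choices therefore strictly increase $\#(D)$. The problem is that when $m$ is on the $m_1=0$ boundary \emph{and} its neighbour $(1,m_2-1)$ on the same level is also zero, your minimality hypothesis gives you no control over $E_L$.

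The paper (Lemma~\ref{filling}) avoids this by filling an entire connected run of zeros at level $k$ simultaneously with \emph{alternating} signs. Alternation kills all the internal ``would-be'' nodes between newly filled points, and when the run reaches a side of the triangle one chooses the sign at the boundary to cancel the inevitable new corner node against an old one. In the example above, filling $(1,1)=N$, $(0,2)=P$ gives net $-1$. A one-at-a-time variant also works if you always fill a zero \emph{adjacent to a nonzero point on the same level}, rather than the leftmost one.

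Your counting sketch is also too loose. Peeling off the top row does not cleanly give ``$\tfrac{d+4}{2}$ nodes whose vertices lie at levels $\le d-2$'': the inductive bound for the sub-triangle includes its own top-edge nodes, which sit at level $d-1$ and can be destroyed when you restore the top row. One can make an inductive argument work, but it requires tracking exactly how many level-$(d-1)$ triangles lose node status versus how many level-$d$ triangles gain it. The paper's proof of Lemma~\ref{dkr-lemma} sidesteps this bookkeeping: it counts sign changes $m_k$ along each row, observes that rows $k$ and $k+1$ force at least $\tfrac{|m_k-m_{k+1}|}{2}$ nodes between them, and telescopes from the single point at level~$0$ up to the $d-1-\ell$ top-edge nodes plus two corners at the top.
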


The proof of Proposition \ref{twodim} will consist of two short lemmas.
We will say that a diagram $D^\prime$ is an \emph{extension} of $D$
if the support $K^\prime$ of $D^\prime$ contains the support $K$ of $D$
and such that $D^\prime|_K = D|_K$.

\begin{lemma}\label{filling}
Let $D$ and $K$ be as in Proposition \ref{twodim}.  Let $k < d$ and suppose that
$K$ contains all points $a$ with $\abs{a} \leq k-1$.  Then there exists an
extension $D^\prime$ of $D$ that has equal or fewer nodes, and whose support
$K^\prime$ contains all points $a$ with $\abs{a} \leq k$.
\end{lemma}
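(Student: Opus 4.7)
The plan is to construct a specific extension $D'$ that fills every missing position at level $k$ with a sign $P$ or $N$, and to show by direct case analysis that $\#(D') \leq \#(D)$. First I would observe that since $K$ is connected, contains $(0,0)$, and has size $d>k$, the set $M \subseteq \{0,1,\ldots,k\}$ of missing positions (the $a$ with $(a,k-a)\notin K$) is a \emph{proper} subset: if $M=\{0,\ldots,k\}$ then no level-$k$ point lies in $K$, so by connectedness $K$ can contain no point at any level $\geq k$, contradicting size $d$. Partition $M$ into maximal runs of consecutive integers, and write $s_j:=D(j,k-1-j)$ for the nonzero level-$(k-1)$ signs and $\tau_c:=D(c,k+1-c)$ for the (possibly zero) level-$(k+1)$ values.

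The sign rule is: on each maximal run, fill the missing positions with alternating signs. If the run contains the position $0$, choose the starting sign so that $\sigma'_0\neq s_0$, and if it contains $k$, so that $\sigma'_k\neq s_{k-1}$. These two edge constraints never conflict, since no run contains both $0$ and $k$ by the previous paragraph.

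Counting $\Delta:=\#(D')-\#(D)$, only two families of triangles can change status: the \emph{downward} triangles $\{(a,k-a),(a-1,k-a),(a,k-a-1)\}$ with apex a newly filled point, and the \emph{upward} triangles $\{(c,k+1-c),(c-1,k+1-c),(c,k-c)\}$ whose base contains a newly filled point. A short case check in $(s_{a-1},s_a)$ shows $\Delta_D(a)\leq 0$ for either sign choice: the pre-fill triangle $\{0,s_{a-1},s_a\}$ is a node iff $s_{a-1}=s_a$, and the post-fill triangle is a node iff additionally $\sigma'_a$ matches, so filling never creates a new downward node. For an upward triangle, $\Delta_U(c)=+1$ can happen only when $\tau_c=0$ \emph{and} both base values were zero before filling. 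The alternating rule ensures $\sigma'_{c-1}\neq\sigma'_c$ at every apex interior to a run, so the filled triangle contains both $P$ and $N$ and $\Delta_U(c)\leq 0$. At an apex bordering a run (exactly one missing base) the fixed nonzero base keeps $\Delta_U(c)\leq 0$ in each subcase of $\tau_c\in\{0,P,N\}$. The only unavoidable $+1$ arises at a corner apex $c=0$ or $c=k+1$ whose in-range base is missing and for which $\tau_c=0$; there the prescribed edge fill delivers a matching $\Delta_D=-1$ at the corresponding boundary downward triangle, exactly compensating. Summing over runs gives $\Delta\leq 0$.

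The main obstacle is the systematic case analysis for $\Delta_U(c)$ in the ``one missing base'' scenario, and verifying that the alternating pattern together with the two edge prescriptions is always self-consistent regardless of the parity of a run's length (which ultimately works because no run can touch both edges). Once the rule is fixed the remaining inequalities are routine local combinatorics on the hexagonal lattice.
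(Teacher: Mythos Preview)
Your proof is correct and follows essentially the same approach as the paper's: fill each maximal run of zeros at level $k$ with alternating signs, and at a run touching a boundary choose the boundary sign to kill the existing vertex/edge node there, so that any new boundary node it creates is compensated. The paper's own proof is a three-sentence sketch of exactly this idea; your version supplies the explicit triangle-by-triangle bookkeeping (the $\Delta_D$, $\Delta_U$ analysis and the observation that $M$ proper prevents the two edge prescriptions from colliding) that the paper leaves implicit.
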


\begin{proof}
We change the $0$-points at level $k$ into alternating $N$- and $P$-points, one
connected group of $0$-points of level $k$ at a time.
Since there must be at least one
$P$- or $N$-point at the $k$-level, there are two cases that we should
consider: one is that the group has $N$- or $P$-points on both sides, and the
other is that the group extends to one of the boundaries.  In
the former case no new nodes can be created by changing the points and we are
done. In the latter case it is possible to create a node at the boundary, but
by choosing the boundary point correctly one boundary node is eliminated as
well and we are done.
See Figure~\ref{fig:filling}.  The circles with the dark borders
are the nonzero points that were added in a single step.
\end{proof}

\begin{figure}[h!t]
\begin{center}
\includegraphics{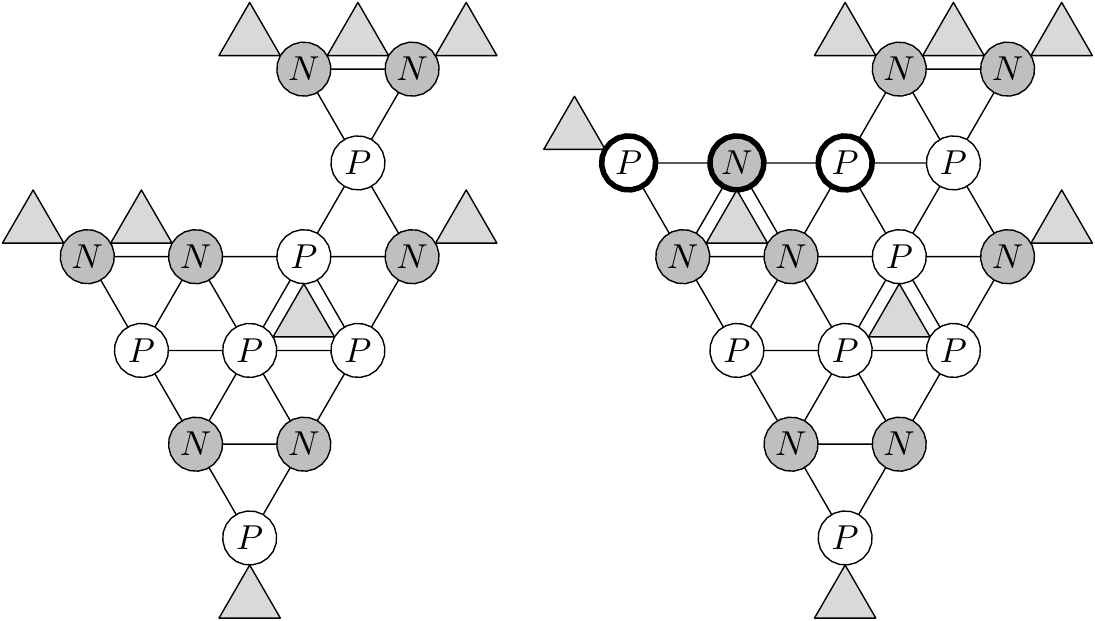}
\caption{Filling a 2-dimensional diagram.\label{fig:filling}}
\end{center}
\end{figure}

Using Lemma~\ref{filling} repeatedly we obtain a diagram with
support
$K = \{(a,b) \in \N_0^2 \mid a+b < d\}$.
Proposition~\ref{twodim} now follows from the following Lemma:

\begin{lemma}\label{dkr-lemma}
Let $D$ be a Newton diagram in two variables with support $K$ of size $d$
and suppose that $K$ is
maximal: it contains all $(a,b) \in \N_0^2$
with $a+b < d$. Then $D$ has at least
$\frac{d+5}{2}$ nodes.
\end{lemma}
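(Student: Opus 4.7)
My plan is to reduce the lemma to a clean combinatorial inequality on $2$-colorings of the triangular lattice and then prove that inequality using a dual-graph analysis of the "defect" edges.

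Since $K$ is maximal every point of $K$ carries a sign in $\{P,N\}$, and a node is an upward-pointing unit triangle of the lattice whose real vertices are all monochromatic (vertices outside $K$ count as $0$ and do not break monochromaticity). I would classify the upward triangles by how many of their vertices lie in $K$: the three corner triangles at apex $(0,0),(0,d),(d,0)$ have only one real vertex and are automatic nodes; the $3(d-1)$ boundary-edge triangles along the three sides of $K$ have two real vertices and are nodes iff the corresponding boundary edge of $K$ is monochromatic; and the $(d-1)(d-2)/2$ interior triangles have all three vertices in $K$ and are nodes iff their three vertices share a sign. With only two sign values, every small triangle (up or down) of the planar triangulation of $K$ has an even number of defect edges, namely $0$ or $2$. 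Letting $c_{\mathrm{bdy}}$ and $c_{\mathrm{int}}$ denote the number of defect edges on the boundary cycle and in the interior of $K$, a direct count gives the identity
\begin{equation*}
\#(D) \;=\; \frac{(d+1)(d+2)}{2} - c_{\mathrm{bdy}} - \tfrac{1}{2}c_{\mathrm{int}},
\end{equation*}
so the target $\#(D) \geq (d+5)/2$ is equivalent to $2c_{\mathrm{bdy}} + c_{\mathrm{int}} \leq (d-1)(d+3)$.

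To attack this inequality I would pass to the planar dual of the triangulation of $K$. Since the defect subgraph has degree $0$ or $2$ at every in-$K$ triangle, it decomposes into disjoint simple cycles in the interior of $K$ and simple paths whose two endpoints are boundary defect edges; in particular the number of defect paths is exactly $c_{\mathrm{bdy}}/2$, and these paths and cycles are pairwise vertex-disjoint in the dual. Summing the defect degrees two ways yields the auxiliary identity $c_{\mathrm{bdy}} + 2c_{\mathrm{int}} = 2[(d-1)^2 - M_T]$, where $M_T$ is the total number of monochromatic triangles. Substituting this into the target inequality rewrites it as the single clean lower bound $2M_T \geq 3c_{\mathrm{bdy}} - 8(d-1)$.

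The main obstacle is establishing this last inequality tightly. When $c_{\mathrm{bdy}} \leq 8(d-1)/3$ it is trivial, so the real work is the regime where $c_{\mathrm{bdy}}$ approaches its maximum $3(d-1)$, where one must force $M_T \geq (d-1)/2$. I expect this to follow from analyzing how the $c_{\mathrm{bdy}}/2$ disjoint defect paths can fit inside a region containing only $(d-1)(d-2)/2$ up-triangles and $d(d-1)/2$ down-triangles, using that each path must consume at least one triangle and that paths anchored at nearby boundary defects are forced to occupy disjoint corridors in $K$. The extremal configuration is a perfectly alternating boundary $PNPN\cdots$ around the cycle of length $3(d-1)$ (possible for $d$ odd), which saturates $c_{\mathrm{bdy}} = 3(d-1)$ and realizes equality $\#(D) = (d+5)/2$, and this extremal example should suggest precisely the combinatorial arrangement of paths that must be ruled out in the general argument.
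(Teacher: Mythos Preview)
Your reformulation and the identity
\[
\#(D)\;=\;\frac{(d+1)(d+2)}{2}-c_{\mathrm{bdy}}-\tfrac12\,c_{\mathrm{int}}
\]
are correct, so the target is indeed $2c_{\mathrm{bdy}}+c_{\mathrm{int}}\le(d-1)(d+3)$. The gap is at the end: you substitute in $M_T$, arrive at $2M_T\ge 3c_{\mathrm{bdy}}-8(d-1)$, and then only \emph{expect} this to follow from a path-packing argument that you never carry out. That detour makes the problem look harder than it is. Stay with the original inequality and use the down-triangles instead of the up-triangles: every edge of the triangulated region $K$ lies in exactly one \emph{downward} unit triangle (the upward neighbour of a boundary edge always has a vertex outside $K$), and each downward triangle has $0$ or $2$ defect edges. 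Summing gives
\[
c_{\mathrm{bdy}}+c_{\mathrm{int}}\;\le\;2\cdot\frac{d(d-1)}{2}\;=\;d(d-1),
\]
and together with the trivial $c_{\mathrm{bdy}}\le 3(d-1)$ this yields
\[
2c_{\mathrm{bdy}}+c_{\mathrm{int}}\;=\;c_{\mathrm{bdy}}+(c_{\mathrm{bdy}}+c_{\mathrm{int}})\;\le\;3(d-1)+d(d-1)\;=\;(d-1)(d+3).
\]
So your approach does give a complete proof; you just missed the one-line finish.

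This is a genuinely different argument from the paper's. The paper counts the number $l_k$ of sign changes along each horizontal level, observes that two adjacent levels force at least $\tfrac12|l_{k+1}-l_k|$ nodes between them, and telescopes from $l_0=0$ up to the top row; adding the $d+1-l_{d-1}$ top nodes and the single bottom node gives $\#(D)\ge d+2-\tfrac12 l_{d-1}\ge\tfrac{d+5}{2}$. Your global defect-edge count is cleaner for this lemma taken in isolation. The advantage of the paper's row-by-row method is that it adapts with essentially no change to the weighted surface count $SC(D)$ of Lemma~\ref{sharper-lemma}, which is the refinement actually needed for the three-dimensional theorem; extracting that weighted version from your formulation would require additional work.
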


\begin{proof}
Let us consider a horizontal row $\{(a,b) \in \N_0^2 \mid a+b = k\}$ at level $k < d-1$. Suppose that, starting with $(k,0)$ and ending with $(0,k)$, the sign changes $m$ times from $P$ to $N$ or vice-versa. Also suppose that there are $n$ sign changes on the next higher row $\{(a,b) \mid a+b = k+1\}$. Then there are at least $\frac{\abs{m-n}}{2}$ nodes involving points only on these two rows.

Now suppose that there are $l$ sign changes on the highest row $\{(a,b) \mid a + b = d-1\}$. Then there are exactly $d-l+1$ nodes on this highest row. Since there are zero sign changes on the lowest row $\{(0,0)\}$, it follows from the observation in the previous paragraph that there must be at least $\frac{l}{2}$ nodes involving some point $(a,b)$ with $1 < a+b < d-1$, as well as the one node involving the points $(0,0), (-1,0)$ and $(0,-1)$. Adding the three numbers gives a total of at least $d - \frac{l}{2} + 2$ nodes. The minimum number of nodes can therefore only be achieved when $l = d-1$, the maximum number of sign changes on the row $\{(a,b) \mid a+b = d-1\}$. Inserting $l=d-1$ shows that there are at least $\frac{d+5}{2}$ nodes.
\end{proof}

We end this section with the proof of Theorem~\ref{maindim2thm}.  We will
prove the following stronger result about Newton diagrams that at once implies
Theorem~\ref{maindim2thm}.

\begin{thm} \label{maindim2newtonthm}
Let $D$ be a Newton diagram with connected support $K$ of size $d$
in two variables. Then
\begin{equation}
\#(D) \geq \frac{d+5}{2}.
\end{equation}
\end{thm}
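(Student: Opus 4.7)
The plan is to reduce Theorem~\ref{maindim2newtonthm} to Proposition~\ref{twodim} by a careful extension argument. First observe that translating the support $K \subset \Z^2$ preserves $\#(D)$, connectedness, and the size $d$, so we may assume $\widehat{K}$ is the standard simplex $\{(a,b) \in \N_0^2 : a+b \leq d-1\}$. If $(0,0) \in K$ then Proposition~\ref{twodim} applies directly; hence suppose $(0,0) \notin K$, and set $\ell := \min\{|m| : m \in K\} \geq 1$. Connectedness of $K$ together with the fact that $\widehat{K}$ is the smallest enclosing simplex forces $K$ to meet every side of $\widehat{K}$, and by the connectedness argument from Section~\ref{section:projprob} the set of levels present in $K$ is the full interval $[\ell,d-1]$.

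The strategy is to mimic the two-step proof of Proposition~\ref{twodim}, playing the role of ``bottom level~$0$'' with ``bottom level~$\ell$''. A generalized filling lemma extends Lemma~\ref{filling}: if $K$ contains all lattice points in $\N_0^2$ at levels $\ell,\ell+1,\ldots,k-1$, then $D$ can be extended (with equal or fewer nodes) so that $K$ contains all lattice points at levels $\ell,\ldots,k$. The proof is the same alternating-signs case analysis as in Lemma~\ref{filling}; whether $(0,0) \in K$ is irrelevant. To launch this iteration one first needs the row at level $\ell$ to be complete, which requires a separate short argument filling that bottom row.

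Once the filling is complete, the support is the truncated filled simplex $K^\ast = \{m \in \N_0^2 : \ell \leq |m| \leq d-1\}$, and we count nodes as in Lemma~\ref{dkr-lemma}. Each of the $\ell+1$ lattice points on the bottom row is automatically a node, since its two lower neighbours lie at level $\ell-1$ and are outside $K^\ast$. The top row (level $d-1$) contributes $d+1-l$ nodes at level~$d$, where $l$ is the number of sign changes on the top row. Between the two extremes, the sign-change propagation argument of Lemma~\ref{dkr-lemma} contributes at least $|l - l_\ell|/2$ intermediate nodes, where $l_\ell \in [0,\ell]$ is the number of sign changes on the bottom row. Summing the three contributions and minimising over $l \in [0,d-1]$ and $l_\ell \in [0,\ell]$ yields $\#(D) \geq (\ell + d + 5)/2 \geq (d+5)/2$, with strict inequality when $\ell \geq 1$; this absorbs the loss of the distinguished ``$(0,0)$ node'' that featured in the proof of Lemma~\ref{dkr-lemma}.

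The main obstacle is the base case of the filling, namely completing the bottom row at level $\ell$. Unlike filling a higher row, every point newly added at level~$\ell$ automatically becomes a bottom node, so the ``no new nodes'' argument of Lemma~\ref{filling} has to be modified to account for these unavoidable new nodes. The expected remedy is a careful sign assignment that compensates by removing existing nodes at level $\ell+1$: a case-by-case analysis of the connected blocks of zeros on the bottom row, mirroring the two boundary cases in the proof of Lemma~\ref{filling}, should allow one to show that the total count $\#(D)$ does not decrease under the extension. With this technical point in hand, the argument sketched above goes through and yields the required estimate.
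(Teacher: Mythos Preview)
Your approach is genuinely different from the paper's, and the step you yourself identify as the ``main obstacle'' is a real gap that cannot be repaired along the lines you suggest.

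The paper's proof never attempts to fill the bottom row. Instead, after translating so that $K$ meets both sides $m_1=0$ and $m_2=0$, one takes a connected path in $K$ from one side to the other and fills all zeros \emph{above} this path (which is unproblematic: every newly filled point sits above a nonzero point, so the bottom node it creates can always be cancelled, exactly as in Lemma~\ref{filling}). The filled support then contains the top corner $(d-1,0)$. Now one swaps the variables $X_0 \leftrightarrow X_1$; under this symmetry of the simplex the point $(d-1,0)$ is sent to $(0,0)$, the node count is unchanged, and Proposition~\ref{twodim} applies directly. The variable swap is the device that lets the paper avoid the bottom-row issue entirely.

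Your proposed remedy for the bottom row does not work. Filling a zero point $p$ at level~$\ell$ always creates a new bottom node, and you hope to compensate by destroying a node at level $\ell+1$. But there need not be any such node to destroy: if the level-$(\ell+1)$ point above $p$ is itself zero, or if the two nonzero neighbours already have opposite signs, then the triangles at level $\ell+1$ containing $p$ were not nodes before the extension, and filling $p$ can only create nodes, never remove them. Concretely, take $d=5$, $K=\{(0,2),(0,3),(1,2),(2,2),(3,1),(4,0)\}$ with $D(1,2)=N$ and all other values $P$. This $K$ is connected, has $\widehat{K}$ equal to the standard size-$5$ simplex, and $\ell=2$. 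The zeros on the bottom row are $(1,1)$ and $(2,0)$. For \emph{any} choice of signs at these two points one checks that $\#(D)$ increases by at least~$3$: the two new bottom nodes are forced, the triangle with apex $(3,0)$ becomes a node, and none of the triangles with apex $(1,2)$, $(2,1)$, $(3,0)$ was a node beforehand (the first has mixed signs, the other two are all zero). So ``a careful sign assignment that compensates by removing existing nodes at level $\ell+1$'' is impossible here, and the inequality $\#(D') \le \#(D)$ you need (you wrote ``does not decrease'', but you need ``does not increase'') fails.
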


\begin{proof}
Without loss of generality we may assume
that the support $K$ contains points on both sides, that is
point $(a,0)$ and a point $(0,b)$.  By connectedness $K$ must
contain a connected path from the left side to the right. By the proof of
Lemma~\ref{filling} we can change all the $0$-points of $D$ above this
connected path into $P$- and $N$-points without increasing the number of
nodes. By switching the roles of $X_0$ and $X_1$ we then obtain a new Newton
diagram with the same number of nodes whose support is connected, of size $d$
and contains $(0,0)$, which by Proposition~\ref{twodim} completes the proof.
\end{proof}


\section{Preparing for three dimensions}

The main idea that we use for estimating the number of nodes of a
$3$-dimensional Newton diagram $D$ is to ignore the nodes that might occur in
the interior of the diagram and to only count the number of nodes that occur
on each of the faces of the support $K$. For each of these faces we can then use
two dimensional estimates. Of course, by counting the number of nodes on each
face separately and then adding the estimates we are in danger of counting
some nodes more than once, namely nodes involving points that lie on more than
one face. We therefore adjust the node count.

\begin{defn}
Let $D$ be a $2$-dimensional Newton diagram. A node that involves no zeroes
is called an {\emph{interior node}}, a node with exactly one zero is called
an {\emph{edge node}}, and a node with two zeroes is called a {\emph{vertex
node}}. When both zeroes of a vertex node lie at the bottom of the node, the
node is called a {\emph{bottom node}}.

The {\emph{weighted surface count}} of $D$ is the number of interior nodes, plus half
the number of edge and vertex nodes, but ignoring the number of corner nodes:
\begin{multline}
SC(D) := \text{(\# of interior nodes)}
+ \frac{1}{2} \times \Bigl(\text{(\# of edge nodes)} + {} \\
\text{(\# of vertex nodes)} - \text{(\# of bottom nodes)} \Bigr) .
\end{multline}
\end{defn}

Essentially the same proof as that of Lemma~\ref{dkr-lemma} gives us
then the following, perhaps somewhat surprising result.  While
edge nodes have weight only $\nicefrac{1}{2}$, we still get the same
estimate.

\begin{lemma}\label{sharper-lemma}
Let $D$ be a Newton diagram in two variables with support $K$
of size $d$ and suppose that its support $K$ is maximal, that is,
contains all $(a,b)$ with $a+b < d$. Then
\begin{equation}
SC(D) \geq \frac{d+1}{2}.
\end{equation}
\end{lemma}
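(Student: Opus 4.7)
The plan is to reduce the claimed bound to an inequality on the number of valid interior nodes, and then establish that inequality via a dual-graph argument on the triangulation of $K$.

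Since $K$ is maximal, every potential edge and vertex node can be enumerated. Each of the three sides of $K$ (top, left, right) has length $d-1$ and supports $d-1$ potential edge nodes, each of which is valid precisely when the corresponding two in-region points share the same sign. Writing $l$, $L$, $R$ for the numbers of sign changes along the top, left, and right sides respectively, the counts of valid edge nodes on these sides are $d-1-l$, $d-1-L$, $d-1-R$. The two non-bottom vertex nodes (at $(0,d)$ and $(d,0)$) are always valid and each contribute weight $\frac{1}{2}$, while the bottom vertex contributes $0$ by definition. Letting $I$ denote the number of valid (monochromatic) interior nodes, a short calculation gives
\begin{equation*}
SC(D) \;=\; I + \frac{3d - 1 - (l + L + R)}{2},
\end{equation*}
so the asserted bound $SC(D) \geq (d+1)/2$ is equivalent to $I \geq \frac{1}{2}(l + L + R) - (d-1)$.

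To prove this, I would use the standard triangulation of $K$, whose $(d-1)^2$ small triangles split as $\frac{(d-1)(d-2)}{2}$ ``upward'' triangles (the possible interior-node positions) and $\frac{d(d-1)}{2}$ ``downward'' triangles, with bipartite dual. Call an edge \emph{bad} if its endpoints carry different signs; with a two-valued sign assignment, each interior face has $0$ or $2$ bad edges, and a face is bad exactly when it is not monochromatic. The bad edges, together with an ``outside'' vertex attached to all bad boundary edges, form a graph in which every bad interior face has degree $2$ and the outside vertex has degree $l + L + R$; its connected components are \emph{arcs} (paths from outside to outside through bad faces) and \emph{loops} (cycles among bad faces). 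The number of arcs is $A := \frac{1}{2}(l + L + R)$. The key geometric observation is that \emph{every boundary edge of $K$ is adjacent on the interior side to a downward triangle}, verified by a direct check on each of the three sides. Combined with bipartiteness of the dual, this forces every arc to begin and end at a downward triangle and to alternate down--up--down--$\cdots$; hence each arc has odd length containing exactly one more down bad face than up bad face, whereas loops, being even cycles, contain equal numbers of each type. Summing over all components gives the telescoping identity
\begin{equation*}
\#(\text{bad down}) - \#(\text{bad up}) = A.
\end{equation*}
Because $\#(\text{bad down})$ cannot exceed the total number $\frac{d(d-1)}{2}$ of downward triangles, we obtain $\#(\text{bad up}) \leq \frac{d(d-1)}{2} - A$, and therefore
\begin{equation*}
I \;=\; \frac{(d-1)(d-2)}{2} - \#(\text{bad up}) \;\geq\; A - (d-1) \;=\; \frac{l + L + R}{2} - (d-1),
\end{equation*}
which is precisely the reduction needed.

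I expect the most delicate step to be the boundary-adjacency claim together with the resulting arc-parity argument — ensuring that every arc genuinely begins and ends at a downward triangle, so that it contributes exactly one extra down bad face to the count. Once that is set up, the telescoping identity, the trivial upper bound on bad downward faces, and the substitution back into the expression for $SC(D)$ are all routine and yield the bound $(d+1)/2$.
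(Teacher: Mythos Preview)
Your argument is correct and takes a genuinely different route from the paper's. The paper proceeds level by level, exactly as in the proof of Lemma~\ref{dkr-lemma}: if two adjacent rows carry $m$ and $n$ sign changes, the weighted node count between them (edge nodes at the two sides contributing $\nicefrac{1}{2}$) is at least $\frac{|m-n|}{2}$; telescoping from the bottom row ($0$ sign changes) to the top row ($l$ sign changes) and then adding the $d-1-l$ top edge half-nodes and the two top vertex half-nodes gives $\frac{l}{2}+\frac{d-1-l}{2}+1=\frac{d+1}{2}$. Your approach is instead global and symmetric in the three sides: the reduction to $I \geq \tfrac{1}{2}(l+L+R)-(d-1)$ is the same arithmetic, but you establish it via the bipartite dual of the triangulation and the arc/loop decomposition of the ``bad'' subgraph, yielding the clean identity $\#(\text{bad anti-node})-\#(\text{bad node})=\tfrac{1}{2}(l+L+R)$. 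The paper's row-by-row viewpoint feeds directly into the constructive companion Lemma~\ref{prescribed} (building sharp examples one row at a time), whereas your argument makes the three-fold symmetry manifest and isolates a structural fact that may be of independent use. One minor terminological remark: in the paper's drawing convention the node triangles point \emph{downward}, so your ``upward/downward'' labels are reversed relative to the figures; your counts $\frac{(d-1)(d-2)}{2}$ and $\frac{d(d-1)}{2}$ are attached to the correct triangle types, so this is purely cosmetic.
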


\begin{proof}
As we said, the proof is similar to proof of Lemma~\ref{dkr-lemma}.
Again, we consider two adjacent levels in the diagram.
If the higher level has $n$ sign changes and the
lower level has $m$ sign changes then there must be at least
$\frac{\abs{m-n}}{2}$ nodes involving points only on these two levels.
This weighted count now includes half nodes at the edges.

What is left is to count the edge nodes that only involve the top level.
There are always 2 vertex nodes, plus if there $m$ sign changes at
the top level, then there are $d-m-1$ edge nodes.  An induction argument
completes the proof.
\end{proof}

The result is surprising because even though the nodes on the edges
and at the vertices have weight only half,
we still obtain the same estimate as in
Lemma~\ref{dkr-lemma}, except for the obvious two times $\nicefrac{1}{2}$
that we
lose for the two vertex nodes on the top level and the $1$ that we lose for
the bottom node. Thus Lemma~\ref{dkr-lemma} follows immediately from
Lemma~\ref{sharper-lemma}.  Do note that we now allow more half nodes at
the edge.  Therefore for a diagram with $d$ edge nodes and 3 vertex nodes
we again obtain $SC(D) = \frac{d+1}{2}$.

The result in Lemma~\ref{sharper-lemma} is
necessarily also sharp but we even have this stronger sharpness result that
will be useful later:

\begin{lemma}\label{prescribed}
Let $D$ be a maximal Newton diagram with support of size $d$.
Then we can find a
Newton diagram $\tilde{D}$ with support of size $d$
that is maximal, agrees with $D$
for all $(a,b)$ with $a+b = d-1$ and such that $SC(D)$ is exactly equal to
$\frac{d+1}{2}$.
\end{lemma}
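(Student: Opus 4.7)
The plan is to exhibit $\tilde D$ explicitly by extending the prescribed top row downward through a single alternating-sign rule, and then verify $SC(\tilde D)=\frac{d+1}{2}$ by direct enumeration. Since $D$ is maximal its top row is zero-free; writing $\sigma_0,\sigma_1,\ldots,\sigma_{d-1}\in\{P,N\}$ for its values at the points $(0,d-1),(1,d-2),\ldots,(d-1,0)$ (and identifying $P=+1$, $N=-1$), I would define
\[
\tilde D(a,b):=(-1)^{d-1-a-b}\,\sigma_a \qquad \text{for }\ a+b\le d-1.
\]
By construction $\tilde D$ has maximal support of size $d$ and agrees with $D$ on the top row, so it remains only to compute $SC(\tilde D)$.

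The decisive structural feature is that $\tilde D(a,b)$ and $\tilde D(a,b-1)$ are opposite signs whenever both lie in the support; that is, moving down one row inside a fixed column flips the sign. I would first exploit this to eliminate all interior nodes: any interior upward triangle contains its top $(\alpha_1,\alpha_2)$ together with the same-column bottom $(\alpha_1,\alpha_2-1)$, which have opposite signs, so the triangle contains both a $P$ and an $N$ and is not a node. The same observation also handles every left-side triangle (top at $(0,j)$), whose two non-zero entries sit in column $0$ at two adjacent rows and so carry opposite signs.

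Next I would enumerate the remaining candidate nodes: right-side triangles with top $(j,0)$ for $1\le j\le d-1$; top-level triangles with top $(\alpha_1,\alpha_2)$ on the line $\alpha_1+\alpha_2=d$ with $1\le\alpha_1\le d-1$; and the three corner triangles at $(0,d)$, $(d,0)$, $(0,0)$. A short parity computation using the factor $(-1)^{d-1-j}$ shows that a right-side triangle is a node exactly when $\sigma_{j-1}\ne\sigma_j$, while a top-level triangle is a node exactly when $\sigma_{\alpha_1-1}=\sigma_{\alpha_1}$. Letting $l$ denote the number of sign changes on the top row, this yields $l$ right-side edge nodes, $(d-1)-l$ top-level edge nodes, three corner vertex nodes, and one bottom node (at $(0,0)$).

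Summing the weighted contributions gives
\[
SC(\tilde D)=0+\tfrac12\bigl((d-1-l)+l+3-1\bigr)=\tfrac{d+1}{2},
\]
as required. The one step that demands care is the sign-parity bookkeeping for the right-side triangles: it is there that the $l$ top-row sign changes must translate into exactly $l$ right-boundary edge nodes, and there that one must verify the asymmetry with the left side, where no boundary edge nodes appear at all. Everything else is immediate from the column-flip structure of the construction.
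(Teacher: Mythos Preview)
Your proof is correct and follows essentially the same approach as the paper's: the explicit formula $\tilde D(a,b)=(-1)^{d-1-a-b}\sigma_a$ is precisely the construction the paper describes as ``remove the leftmost entry of the previous level and change all the signs,'' and your node-by-node verification recovers the paper's count of $l$ side edge nodes, $d-1-l$ top edge nodes, and two top vertex nodes. Your writeup is simply more explicit in carrying out the bookkeeping.
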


\begin{proof}
Construct the example level by level, starting with the top level,
which is already
determined by $D$. Each next lower level is obtained by removing the leftmost
entry of the previous level, and then changing all the signs (so $P$ becomes
$N$ and vice verse). Using this construction there cannot be any interior
nodes, and the number of edge nodes on the sides is equal to the number of
sign changes on the top level. The number of edge nodes on the top level
is equal
to $d-1$ minus the number of sign changes. Adding these two numbers plus the
two vertex nodes on the top level gives $d+1$ nodes that each have weight 
$\nicefrac{1}{2}$.
\end{proof}

In the three dimensional proof we will perform several different operations
on $3$-dimensional Newton diagrams that change the shape of one or more
faces. We end the section with three $2$-dimensional lemmas that will be
needed to estimate how the number of nodes on the faces change with these
operations.

\begin{lemma}\label{SC-filling}
Let $D$ be a two-dimensional Newton diagram, and suppose that its support $K$
contains all points $(a,b)$ with $a+b < k$ and at least one point $(a,b)$
with $a+b = k$. Then there exist a Newton diagram $\tilde{D}$ that is an
extension of $D$ (i.e.\@ the $P$- and $N$-nodes remain unchanged) and
the support $\tilde{K}$ of $\tilde{D}$ contains
all points $(a,b)$ with $a+b = k$ and satisfies
\begin{equation}
SC(\tilde{D}) \leq SC(D).
\end{equation}
Finally the set $\tilde{K} \setminus K$ only contains points $(a,b)$
with $a+b=k$.
\end{lemma}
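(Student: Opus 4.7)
I will imitate the filling procedure of Lemma~\ref{filling}, processing one maximal connected group $G$ of $0$-points at level $k$ at a time and inserting alternating $P$s and $N$s into $G$, but now I track the weighted count $SC$ instead of the raw node count. The hypothesis that level $k-1$ is completely filled is essential: for every newly filled point $(b,c)\in G$, the two level-$(k-1)$ neighbors $(b-1,c)$ and $(b,c-1)$ already lie in $K$ and are nonzero, so the ``bottom'' triangle $\{(b,c),(b-1,c),(b,c-1)\}$ has a base whose signs are fixed by $D$.

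For an interior group (one bounded on both ends at level $k$ by nonzero entries of $D$) the effect of filling a single $(b,c)$ on its bottom triangle is simple to enumerate: if the two base entries agree in a sign $t$, the triangle was an edge node of weight $\tfrac12$ in $SC(D)$, and after filling it becomes an interior node (weight $1$, giving $\Delta SC = +\tfrac12$) if I fill with $t$, or a non-node ($\Delta SC = -\tfrac12$) if I fill with $-t$; if the base entries disagree, the triangle is a non-node both before and after. Sweeping along $G$, the alternating pattern forces a balanced accounting of matches and mismatches with the sign pattern at level $k-1$, precisely in the spirit of the sign-change argument in Lemma~\ref{sharper-lemma}, and the free choice of starting sign lets me arrange that the sum of these $\pm\tfrac12$ contributions is nonpositive. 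The ``top'' triangles of the form $\{(b+1,c),(b,c),(b+1,c-1)\}$ or $\{(b,c+1),(b-1,c+1),(b,c)\}$ (top vertex at level $k+1$) are handled analogously: since no points at level $>k$ are added to $K$, the only data that remains fixed for them is the pre-existing level-$(k+1)$ top vertex, and the same alternation controls their contribution.

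For a boundary group (one that extends to $b=0$, $c=0$, or across all of level $k$) the extreme point of $G$ may form a vertex or edge node against the zeros outside the support, carrying weight $\tfrac12$ (or $0$ in the bottom-node case, hence free). This $\tfrac12$ cost is absorbed either by picking the starting sign to destroy an already-present boundary vertex or edge node of equal weight, or, when no such node exists on either side of $G$, by exploiting the extra sign-change budget inside $G$ granted by the hypothesis that some point at level $k$ was already nonzero. Since the extension adds no points at levels $>k$ and removes none from $K$, all nodes whose support lies at levels $\geq k+2$ are unaffected, so the local accounting is global.

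The main obstacle will be the boundary-group case in which the boundary endpoint of $G$ sits at a side of the diagram while the opposite endpoint lies in the interior: there is then no symmetric boundary node to cancel the $\tfrac12$ cost, and one must check directly that reversing the direction of the alternation (or peeling off the first entry of $G$) always yields an additional interior mismatch sufficient to compensate. Apart from this case analysis, the proof is a straightforward adaptation of the sign-change bookkeeping already used in Lemmas~\ref{dkr-lemma} and \ref{sharper-lemma}.
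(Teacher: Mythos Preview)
Your overall strategy---fill level $k$ without increasing $SC$---matches the paper's, but the mechanism you use differs from the paper's in an important way, and the difference is exactly what makes your boundary case delicate.

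You adapt Lemma~\ref{filling} directly: take a maximal block $G$ of zeros at level $k$, fill all of $G$ at once with a rigid alternating pattern, and use the single free bit (the starting sign) to make the total contribution nonpositive. The paper instead abandons the block-at-a-time alternating scheme and fills \emph{one} $0$-point at a time, always choosing a $0$-point adjacent to a nonzero level-$k$ entry. The crucial observation is then purely local: among the triangles touching the point being filled, those that are weight-$\tfrac12$ nodes of the current diagram change antisymmetrically (assigning $P$ adds some $c$, assigning $N$ subtracts the same $c$), while at most one additional triangle---the forward bottom node whose other base vertex is the next unfilled $0$---can contribute $+\tfrac12$ asymmetrically. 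If $c\neq 0$ pick the sign giving $-|c|$, which absorbs the possible $+\tfrac12$; if $c=0$ pick the sign that prevents the bottom node from becoming an edge node. No separate interior/boundary dichotomy is needed, and there is a fresh choice at every step rather than a single choice for the whole block.

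Your alternating scheme does handle the interior top triangles for free (adjacent filled entries have opposite sign, so those triangles are never nodes), and your antisymmetry-under-start-sign argument for the bottom triangles and the two boundary top triangles is correct. But by committing in advance to alternation you have only two global configurations to choose between, and the obstacle you flag---a boundary group with one end on a side of the support and the other in the interior---is a real one: the side endpoint may create a new vertex/edge node worth $+\tfrac12$ with no symmetric partner to cancel it, and your single remaining bit is already spent controlling the sum of the symmetric contributions. The paper's point-by-point procedure sidesteps this because each step has its own sign choice, so the side endpoint is handled by the same $c$ versus $-c$ argument as any other point. In short, your plan is not wrong, but completing it requires the boundary case analysis you postponed, whereas the paper's one-point-at-a-time symmetry argument avoids that analysis entirely.
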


\begin{proof}
The proof is similar to that of Lemma~\ref{filling} except that we need to be
slightly more careful because different kinds of nodes receive different
weights. This time we change only one $0$-point at the time, starting with a
$0$-point that lies to the left or to the right of a $P$- or $N$-point. By assigning to this point a
$P$ or an $N$, there are a number of nodes of $D$ that have weight
$\nicefrac{1}{2}$ but that by the change either receive weight $1$ or are no
longer a node, depending on the sign we assign to the $0$-point. The situation
for these nodes is completely symmetric: if assigning a $P$ adds $c$ to the
weighted surface count then assigning an $N$ subtracts $c$ to the weighted surface count, where
$c$ is a multiple of $\nicefrac{1}{2}$. There is also at most one bottom node of
$D$ that has no weight, but that can turn into an edge node of $\tilde{D}$,
adding $\nicefrac{1}{2}$ a node to the weighted surface count depending on the choice of
sign. Now it is easy to see what can be done:

If $c=0$ then we can choose the sign such that no bottom node is changed into
an edge node. If $c \neq 0$ then we choose the sign such that the weighted surface
count is decreased by at least one half from the old $\nicefrac{1}{2}$-nodes, and
since the gain from the potential bottom node that changes into an edge node
is at most $\nicefrac{1}{2}$, the weighted surface count cannot increase.
\end{proof}

\begin{lemma}\label{sliceline}
Let $D$ be a Newton diagram with support $K$.  Suppose that
for some $k$, $(a,b) \in K$ whenever
$a+b < k$ and $(0,b), (1, b-1) \notin K$ when $b \geq k$. Then define
$K^\prime \subset \Z^2$ by
\begin{equation}
K^\prime = \{(a,b) \in \Z^2 \mid (a, b+1) \in K\}.
\end{equation}
Then there exists a Newton diagram $D^\prime$ with support $K^\prime$ such that
\begin{equation}
SC(D^\prime) \leq SC(D) - \frac{1}{2}.
\end{equation}
\end{lemma}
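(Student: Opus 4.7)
The plan is to take $D'$ to be the downward shift of $D$ defined by $D'(a, b) := D(a, b+1)$ whenever $(a, b+1) \in K$ and $D'(a, b) := 0$ otherwise. Because $(a, b) \in K'$ precisely when $(a, b+1) \in K$, this is a Newton diagram whose support is exactly $K'$. I will compare $SC(D')$ to $SC(D)$ node by node.

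The key observation is that, for any $(a, b) \in \N_0^2$, the node of $D'$ with top $(a, b)$ has entries
\begin{equation*}
D'(a, b),\; D'(a-1, b),\; D'(a, b-1) \;=\; D(a, b+1),\; D(a-1, b+1),\; D(a, b),
\end{equation*}
which are exactly the entries of the node of $D$ with top $(a, b+1)$. Moreover the apex of one corresponds to the apex of the other and likewise for the two base vertices, so these two nodes share the same classification among interior, edge, vertex, and bottom, and they contribute the same weight to their respective weighted surface counts. This sets up a weight-preserving bijection between the $D'$-nodes indexed by $(a, b)$ with $a + b \leq d - 1$ and the $D$-nodes indexed by $(a, b')$ with $b' \geq 1$, where $d$ is the common size of $K$ and $K'$ (the sizes agree because $(0, 0) \in K$ forces the minimal corner of $\widehat{K}$ to be the origin, while the shifted simplex containing $K'$ has minimal corner $(0, -1)$ and the same diameter).

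It then suffices to account for the two collections of ``leftover'' nodes that fall outside this bijection: those of $D'$ with $a + b = d$ and those of $D$ with $b = 0$. For the former, each of the three entries is $D$ evaluated at a lattice point of level $\geq d$; since $K$ lies inside $\widehat{K}$ whose maximum level is $d - 1$, all such entries vanish, so no leftover $D'$-node is actually a node and these contribute $0$ to $SC(D')$. For the leftover $D$-side I isolate a single node. Set $a^* := \max\{a : (a, 0) \in K\}$; this is well-defined because $(0, 0) \in K$, and $a^* \leq d - 1$ since all points of $K$ have level at most $d - 1$, so $(a^* + 1, 0)$ is a valid node index. Its three entries are $D(a^* + 1, 0) = 0$, $D(a^*, 0) \neq 0$, and $D(a^* + 1, -1) = 0$, making it a vertex node with exactly one zero at the top and one zero at a base vertex; this is not a bottom node, so it contributes $\tfrac{1}{2}$ to $SC(D)$. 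Combining the two accountings yields $SC(D) - SC(D') \geq \tfrac{1}{2}$.

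The main bookkeeping obstacles will be verifying that the sizes of $K$ and $K'$ truly coincide (so the node-index sets on the two sides match) and that the bottom node of $D$ at $(0, 0)$ is correctly recognized as having weight $0$ rather than being conflated with the weight-$\tfrac{1}{2}$ vertex node at $(a^* + 1, 0)$ that drives the inequality. I remark that the stronger exclusion $(0, b),\, (1, b-1) \notin K$ for $b \geq k$ is not used in the two-dimensional lemma itself; it plays a role only in the three-dimensional arguments that invoke this slicing construction on projections of higher-dimensional supports.
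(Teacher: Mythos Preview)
Your proof is correct and follows exactly the same approach as the paper, which simply defines $D'(a,b)=D(a,b+1)$ and asserts that ``the estimate follows immediately.'' You have supplied the details the paper omits: the weight-preserving correspondence between the $D'$-triangle at $(a,b)$ and the $D$-triangle at $(a,b+1)$, and the identification of the non-bottom vertex node at $(a^*+1,0)$ that accounts for the lost $\tfrac12$. Your closing observation that the extra hypothesis on $(1,b-1)$ is not actually used in the two-dimensional argument is also accurate.
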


\begin{proof}
We can define $D^\prime$ naturally by assigning
$D'(a,b) = D(a,b+1)$.
The estimate follows immediately.
\end{proof}

\begin{lemma}\label{trianglegluing}
Let $D$ be a Newton diagram with support $K$ such that for some $k$,
$(a,b) \notin K$
whenever $a+b < k$.  Suppose that $K$ contains at least one point $(a,b)$ with
$a+b = k$. Define
\begin{equation}
K^\prime = K \cup \{(a,b) \in \N_0^2 \mid a+b < k\}.
\end{equation}
Then there is a Newton diagram $D^\prime$ with support $K^\prime$ such that
$SC(D^\prime) \leq \frac{k}{2} + SC(D)$.
\end{lemma}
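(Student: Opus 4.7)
The plan is to set $D' = D$ on $K$ and extend $D'$ to the triangle $T = \{(a,b) \in \N_0^2 : a + b < k\}$ by the construction of Lemma~\ref{prescribed}: choose the top-row signs $a_0, \ldots, a_{k-1}$ of $D'$ at level $k-1$, then fill each lower row by removing the leftmost entry of the row above and negating the remaining signs. Writing $\alpha_j := D(k-j, j) \in \{0, P, N\}$ for $j = 0, 1, \ldots, k$, the hypothesis gives at least one $\alpha_j \neq 0$. The same bookkeeping as in the proof of Lemma~\ref{prescribed} shows that for any choice of top row, the nodes whose apex lies at a level $l \in \{0, 1, \ldots, k-1\}$ contribute a total weight $B = s/2$ to $SC(D')$, where $s$ is the number of sign changes in $a_0, \ldots, a_{k-1}$: on each such level $l$ only the leftmost apex can produce a node, and it does so (as an edge node of weight $1/2$) precisely when the corresponding adjacent pair in the top row has a sign change.

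Nodes of $D$ with apex at level $k$ have base entirely in the all-zero region below level $k$ and are therefore non-nodes or bottom nodes of weight $0$; for $l \geq k+1$ nodes of $D$ and $D'$ agree. Thus $SC(D') - SC(D) = B + C$, where $C$ is the total weight of apex-level-$k$ nodes in $D'$: at position $j' \in \{0, \ldots, k\}$ the relevant node set is $\{\alpha_{j'}, a_{j'-1}, a_{j'}\}$ with the convention $a_{-1} = a_k = 0$. It suffices to pick the top row so that $B + C \leq k/2$.

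\textbf{Case 1: some $\alpha_j \neq 0$ with $0 < j < k$.} Set $a_{j-1} = a_j = -\alpha_j$ and force every other adjacent pair $(a_{i-1}, a_i)$ in the top row to be of opposite sign; this determines the row uniquely and gives $s = k-2$, hence $B = (k-2)/2$. For each interior $j' \in \{1, \ldots, k-1\}$ with $j' \neq j$ the pair $(a_{j'-1}, a_{j'})$ contains both signs, so the node set contains $P$ and $N$ and is not a node; for $j' = j$ the node set is $\{\alpha_j, -\alpha_j, -\alpha_j\} = \{P, N\}$, again not a node; the two endpoint apices $j' = 0$ and $j' = k$ contribute at most $1/2$ each. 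Therefore $C \leq 1$ and $B + C \leq k/2$.

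\textbf{Case 2: all interior $\alpha_j = 0$.} By hypothesis $\alpha_0 \neq 0$ or $\alpha_k \neq 0$; by symmetry assume $\alpha_0 \neq 0$. Take the fully alternating top row with $a_0 = -\alpha_0$, so $s = k-1$ and $B = (k-1)/2$. The apex at $j' = 0$ yields $\{\alpha_0, -\alpha_0, 0\}$, not a node; each interior apex has $\alpha_{j'} = 0$ paired with opposite-sign base, so its node set is $\{0, P, N\}$, not a node; only the apex at $j' = k$ can contribute, and at most $1/2$. Hence $C \leq 1/2$ and $B + C \leq k/2$. The main subtlety is the formula $B = s/2$ in Case~1 for a top row that is not purely alternating: one must verify that the single broken adjacency in the top row propagates through the shift-and-negate rule without producing new interior nodes. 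This holds because the broken pair shifts one position leftward in each successively lower row and falls off the left edge after $j$ rows, so on every level the only candidate for an internal node remains the leftmost apex, exactly as in Lemma~\ref{prescribed}.
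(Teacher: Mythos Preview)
Your proof is correct and follows essentially the same strategy as the paper: fill the triangle $T$ below level $k$ via the shift-and-negate construction of Lemma~\ref{prescribed}, choosing the top row $a_0,\ldots,a_{k-1}$ so that it meshes well with the existing level-$k$ entries $\alpha_j$ of $D$. The paper's version is more compressed: it simply quotes $SC(E)=\frac{k+1}{2}$ for the filled triangle $E$ and asserts that one can pick the top row so that merging with $D$ creates no new interior nodes and cancels at least one top-level half-node of $E$. Your explicit split $SC(D')-SC(D)=B+C$ and the two-case analysis (some interior $\alpha_j\neq 0$ versus only corner $\alpha_j$'s nonzero) make that cancellation step fully precise; in particular, your Case~2 handles the situation where $E$ has no interior top edge node available to cancel, a point the paper glosses over. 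Your closing verification that $B=s/2$ holds for \emph{any} top row (because every interior apex at a lower level always sees both a sign and its negation among its three entries) is exactly the fact the paper is implicitly relying on, so the ``subtlety'' you flag is real but your resolution of it is correct.
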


\begin{proof}
By Lemma~\ref{prescribed} we know that given any choices for the level $\{(a,b)
\mid a+b = k-1\}$, there exist a Newton diagram $E$ of degree $k-1$ with the
chosen top level and maximal $K$ such that the $SC(E) = \frac{k+1}{2}$. We can
choose the top level of $E$ such that by merging $E$ with $D$ no new interior
nodes are created, and such that at least one top level edge node of $E$ is
canceled by a point of $D$ at level $k$. If $D^\prime$ is the Newton diagram
obtained by merging $E$ with $D$ then it follows that
\begin{equation}
SC(D^\prime) \leq \frac{k+1}{2} + SC(D) -
\frac{1}{2}.
\end{equation}
\end{proof}


\section{Three-dimensional results}

Before we start proving our main result, where we use two-dimensional
estimates to do a weighted count of the number of nodes that must occur on each of the faces
of a Newton diagram, we should first carefully define what we mean by a face.

\begin{defn}
A {\emph{face point}} of a Newton diagram $D$ with support $K$ is a point of $K$
that is adjacent to a $0$-point. A {\emph{vertical face}} of $K$ is a
maximal connected family of face points $a = (a_1, \ldots, a_n)$ that lie in a
plane $\{a_j = C\}$. A {\emph{horizontal face}} is a maximal connected family
of face points that lie in a plane $\{\abs{a} = C\}$.
\end{defn}

We can think of a simplex centered at each point of $K$.  Then the union of
the simplices form a polyhedron.  Then faces of the polyhedron correspond
to the faces of $K$ as defined above.  See
Figure~\ref{fig:newtonface} where a vertical face is shaded.

\begin{figure}[h!t]
\begin{center}
\includegraphics{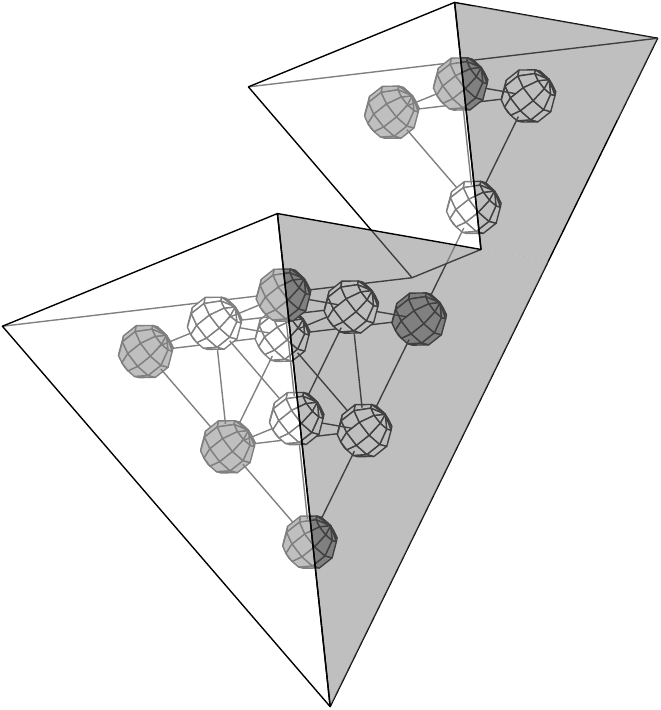}
\caption{Face in a Newton diagram.\label{fig:newtonface}}
\end{center}
\end{figure}

For each face $F$ of $K$ we define $\#(F)$ as the minimal number of nodes
that must occur for any possible configuration $F \rightarrow \{0, P, N\}$,
but counting only nodes that involve only points of $F$ and points that do not
lie in $K$. In other words, if it is possible for a node in $F$ to be
cancelled by $N$- or $P$-points outside the face $F$, we assume that they are
cancelled.

As before, we write $SC(F)$ for the number of nodes of $F$ (as above) but
weighting edge nodes as $\nicefrac{1}{2}$, weighting vertex nodes
in vertical faces
as $\nicefrac{1}{2}$ except for bottom nodes, and not counting bottom nodes
in all faces and vertex nodes in horizontal faces.
We get the following inequality:
\begin{equation}
SC(D) := \cup_{F} SC(F) \leq \#(D).
\end{equation}
Where $F$ ranges over all the faces of $K$.
Since bottom nodes are not counted for $SC(D)$ we obtain that
\begin{equation}
SC(D) \leq \#(D) - \text{(\# of bottom nodes)}.
\end{equation}

We prove the following result:

\begin{thm}
Let $D$ be a 3-dimensional Newton diagram with support $K$
of size $d$, and
assume that $K$ has no overhang.  Then $D$ has at least $2d+2$ nodes.
\end{thm}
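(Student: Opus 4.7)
My plan is to exploit the weighted surface count decomposition $SC(D) = \sum_F SC(F)$ developed in the preceding section, combined with the sharp two-dimensional estimate from Lemma~\ref{sharper-lemma} applied face by face. The key inequality $\#(D) \geq SC(D)$ reduces the theorem to proving $SC(D) \geq 2d+2$. The weighted surface count is calibrated precisely so that a $3$D node whose non-zero points lie on several faces is not overcounted: edge nodes and non-bottom vertex nodes on vertical faces each contribute $\tfrac{1}{2}$, vertex nodes on the horizontal face contribute $0$, and bottom nodes contribute $0$ on every face.

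The main steps are as follows. First, I would identify the four canonical faces of the smallest enclosing simplex $\widehat K = \{x : x_j \geq a_j, |x| \leq d-1+|a|\}$: the three vertical faces $V_1, V_2, V_3$ sitting in the planes $x_j = a_j$ and the horizontal face $H$ at the top level $|x| = d-1+|a|$. Each is naturally a $2$D Newton diagram inside a $2$D simplex of size $d$. Second, for each face $F$ I would apply Lemma~\ref{SC-filling} level by level from below, and Lemma~\ref{trianglegluing} where necessary to supply a missing lower cap, thereby extending $D|_F$ to a maximal $2$D Newton diagram of size $d$ while only decreasing $SC(F)$; here the no-overhang hypothesis is the crucial input ensuring that each face admits such filling without an adverse change in $SC(F)$. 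Third, Lemma~\ref{sharper-lemma} gives $SC(F) \geq (d+1)/2$ on each maximal face, so
\[
SC(D) = \sum_F SC(F) \;\geq\; 4 \cdot \frac{d+1}{2} \;=\; 2d+2,
\]
and hence $\#(D) \geq SC(D) \geq 2d+2$.

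The main obstacle, and the delicate heart of the proof, is the combinatorial bookkeeping around the face intersections of $\widehat K$: three vertical faces meet at the bottom corner and along the three coordinate axes, and $H$ meets each vertical face along a slanted edge. One must verify that the calibrated half-weights make every $3$D node contribute a total of at most $1$ across all its incident faces, so that $\sum_F SC(F) \leq \#(D)$ really holds — this is where the exclusion of bottom nodes and vertex nodes on $H$ pays off, but each configuration (in particular the triple-corner nodes) must be checked. A secondary obstacle is that for very sparse supports $K$ (e.g.\ $K$ collapsing toward a lower-dimensional object), the face-by-face bound $SC(F) \geq (d+1)/2$ can fail on individual faces after the filling, so one either needs the no-overhang hypothesis to guarantee the reduction genuinely produces a maximal face each time, or one must recover the deficit from bottom nodes via $\#(D) \geq SC(D) + (\text{bottom nodes})$. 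Showing that the no-overhang condition is exactly what makes the face-by-face scheme close up is, in my view, the essential difficulty.
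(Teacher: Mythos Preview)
Your outline captures the right ingredients---the face decomposition and the $2$-dimensional surface-count lemmas---but there are two genuine gaps.

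First, the arithmetic in your base case is off. On the horizontal face the three vertex nodes carry weight~$0$ (not~$\tfrac12$), so Lemma~\ref{sharper-lemma} only yields $SC(H)\ge\frac{d-1}{2}$, not $\frac{d+1}{2}$. Summing over the three vertical faces and the top face gives $3\cdot\frac{d+1}{2}+\frac{d-1}{2}=2d+1$, not $2d+2$. The missing unit is recovered from the bottom node at $(0,0,0)$, which has weight~$0$ in every face; the correct chain is $\#(D)\ge SC(D)+(\text{\# of bottom nodes})\ge (2d+1)+1$. You hint at this mechanism but your displayed computation claims $2d+2$ directly, which is false.

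Second, and more seriously, the reduction to the maximal-support case does not go through face by face. The $SC$ decomposition is a sum over the faces of $K$, not the faces of $\widehat K$; when $K$ is far from the full simplex its vertical faces need not be triangles of size~$d$, and the restriction of $D$ to a coordinate plane of $\widehat K$ need not be a $2$-dimensional Newton diagram to which Lemmas~\ref{SC-filling} or~\ref{trianglegluing} apply. The no-overhang hypothesis does not, by itself, force each face to fill out to a maximal triangle---and you correctly flag this as ``the essential difficulty'' without resolving it. The paper's proof does \emph{not} fill faces independently. Instead it argues by minimal counterexample on $d$ (with $|K|$ maximal) and modifies the three-dimensional support $K$ itself: either it \emph{slices off} an entire coordinate layer when one edge of $K\cap S_k$ is empty (this is precisely where no-overhang is invoked, to ensure the removed layer is a full triangle), or it \emph{fills} the whole level $S_k$ in $3$D when every edge is hit. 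Lemmas~\ref{sliceline}, \ref{trianglegluing}, and~\ref{SC-filling} are then used to track how $SC$ changes on the affected faces under these $3$D operations. Your scheme of treating the four faces as independent $2$D problems misses this coupling and, as stated, does not close.
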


\begin{proof}
We will actually prove a stronger statement. Instead of estimating the number
of nodes of $D$, we estimate for each face of $K$ the number of nodes that
must at least occur on that face of $K$, and add those numbers.  Of course by
doing so a node that occurs on more than one face will be counted more than
once.

A node that occurs at an edge (which means two of the involved points must be
$0$) will be counted twice, and a node that occurs at a vertex (which means
three of the involved points are $0$) is counted three times. In order to get
the right count we will give weight $\nicefrac{1}{2}$ to the edge nodes for each face.
Vertex nodes are only counted (with weight $\nicefrac{1}{2}$) on vertical
faces, and only when they are not bottom nodes (otherwise they have weight
zero).

\begin{defn}
Let $K$ be the support of a $3$ dimensional Newton diagram $D$, and let $K_1,
\ldots, K_n$ be the two dimensional faces of $K$. For a vertical face $K_j$ we
define
\begin{equation}
SC(K_j) = \min_{D_j} SC(D_j),
\end{equation}
where the minimum is taken over all $2$-dimensional Newton diagrams $D_j$
with support $K_j$. For a horizontal face we use the same definition except that
none of the vertex nodes are counted at all (all have weight zero), so then
\begin{equation}
SC(D_j) = \text{(\# of interior nodes)} + \frac{1}{2} \times \text{(\# of edge  nodes)}.
\end{equation}
Finally we define
\begin{equation}
SC(K) = \sum SC(K_j).
\end{equation}
\end{defn}

Of course the number we obtain is a lower estimate and does not have to be equal to the actual number of
nodes of $D$ for several reasons. First, interior nodes are not accounted for. Second, the $2$-dimensional Newton diagrams
$D_j$ do not have to correspond to the diagrams on the faces of $D$. In fact,
we don't require that the Newton diagrams $D_j$ ``match'' at all, so there may not
be any Newton diagram $D$ that has optimal $D_j$ as faces. Finally, bottom
nodes all have weight 0, so for each bottom node we count one node too
few. Nevertheless we will be able show that the sum is at least $2d+1$, and
since there is at least one bottom node (the node involving the point
$(0,0,0)$), this will conclude the proof of the theorem.

Before giving the specifics, we note that we can assume that the p-degree of
a polynomial corresponding to $D$ is $d$.  That is, we could translate $K$
to assume that for each $j = 1,2,3$, there is a point $m \in K$
such that $m_j = 0$, but still assume that $K \subset \N_0^3$.  Now by
requiring that $K$ has no overhang, it is not hard to see that $K$ must
contain the point $(0,0,0)$.

For the purpose of contradiction we assume that there exists a $K$ with
$SC(K) < 2d+1$. Let the $d$ be the smallest size for which such $K$
exists.  Further assume that for this $d$ the number of points of $K$,
$\abs{K}$ is maximal, i.e.\@ the number of $P$s and $N$s of $D$ are
maximal.

Now look at the largest $k$ so that $(a,b,c) \in K$ whenever $a+b+c < k$.
Either $k = d$ or $k < d$.

If $k = d$ then we can estimate the number of surface nodes of $K$ by
applying the estimates obtained in \S~\ref{twodimensions} to each of the
sides of the tetrahedron $K$. For each of the three vertical faces we obtain
the estimate $\frac{d+1}{2}$ by Lemma~\ref{sharper-lemma} and for the
horizontal face only $\frac{d-1}{2}$ since the three vertex nodes have weight zero. In total we obtain that $SC(K) \geq 2d+1$, which contradicts our
assumption.

Therefore we may assume that $k < d$. Now we look at the plane
$S_k = \{(a, b, c) \in \N_0^3 \mid a+b+c = k\}$.
Then there are two possibilities. Either $K \cap S_k$
contains at least one element on each of the edges
$\{(0, b, c)\}$, $\{(a, 0, c)\}$ and $\{(a, b, 0)\}$, or $K \cap S_k$ does not
contain an element on one of these edges. Let us deal with the latter case
first.

\textbf{One of the edges is empty: Slice off a face.}

In this case we may without loss of generality assume that $K \cap S_k$ does
not contain elements of the form $(0, b, c)$. Then it follows from the
{\emph{no overhang}} condition that $K$ does not contain any elements of the
form $(0, b, c)$ with $b+c \geq k$. Therefore if we remove from $K$ all
elements of the form $(0, b, c)$ and we subtract $1$ from the first
coordinate of all remaining elements of $K$, we obtain a new subset
$K^\prime \subset \N_0^3$ that satisfies the hypotheses of the theorem, except
that it now has size $d-1$.
See Figure~\ref{fig:newtonslice}.
By the assumption on the minimality of $d$ we have
that the number of surface nodes of $K^\prime$ must be at least $2d-1$. The
contradiction therefore follows if we show that $SC(K^\prime)$ is at least $2$
smaller than $SC(K)$.

\begin{figure}[h!t]
\begin{center}
\includegraphics{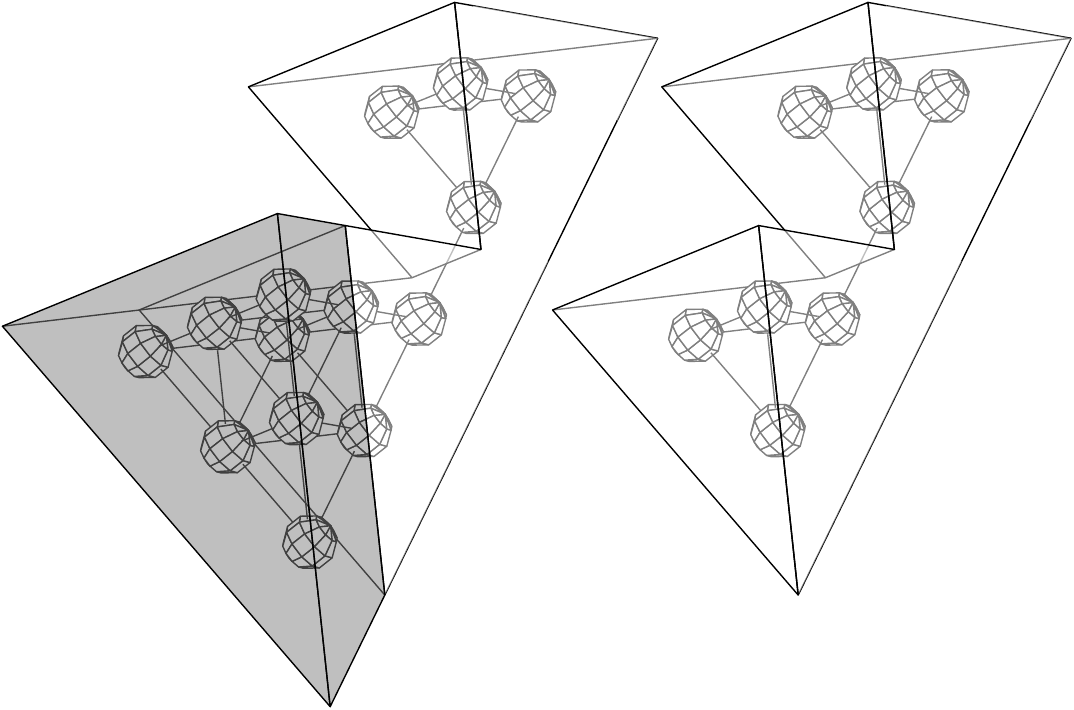}
\caption{Slicing off a face.\label{fig:newtonslice}}
\end{center}
\end{figure}

Fortunately almost all the faces of $K^\prime$ are equal to those of $K$.
There are two side faces that lose one line, namely the faces in the $(b = 0)$-
and  $(c=0)$-planes. Both of these faces lose $\nicefrac{1}{2}$
a surface node by
Lemma~\ref{sliceline}. The horizontal face in the $S_{k-1}$-plane also loses
one line but it may or may not lose half a node. If the intersection $K \cap
S_k \cap \{(1, b, c)\}$ is empty then it is clear that the horizontal face
does lose half a node.  To see this, we refer back to the proof of
Lemma~\ref{sharper-lemma}.  If the intersection is empty, we have removed a
full line of points from the face, thus removing at least half a node.
However, if this intersection is
non-empty then the horizontal face may not lose surface nodes (keep in mind
that vertex nodes have weight zero for the horizontal faces).

One direction remains to be considered. If $K \cap S_k \cap \{(1, b, c)\}$ is
empty then the face $K \cap \{(0, b, c)\}$ is replaced by the face $K^\prime
\cap \{ (0, b, c) \}$, which is just a triangle with side length one smaller.
Therefore the weighted surface count for this smaller triangle is one-half smaller and
the estimate is complete. When $K \cap S_k \cap \{(1, b, c)\}$ is not empty
the situation is slightly more complicated. Again we lose the surface nodes
on the face $K \cap \{(0, b, c)\}$, but we gain nodes when the faces of $K$
in the plane $\{(1,b,c)\}$ are glued to the triangle $\{(1,b,c) \mid b+c <
k-1\}$.  In this case it follows from Lemma~\ref{trianglegluing} that we lose
at least $1$ node which completes the estimate.

\textbf{None of the edges is empty: Fill $S_k$.}

The only case that is left to deal with is when $K \cap S_k$ contains at
least one element on each of the edges $\{(0, b, c)\}$, $\{(a, 0, c)\}$ and
$\{(a, b, 0)\}$. The idea now is to take enlarge $K$ by defining $K^\prime = K
\cup S_k$.  See Figure~\ref{fig:newtonfill}.
By our assumption on $k$ we have that $\abs{K^\prime}$ is strictly
larger than $\abs{K}$, and $K^\prime$ satisfies the no-overhang condition as
well.  If we can show that the number of surface nodes of $K^\prime$
is not larger than that of $K$, then we get a contradiction with our
assumption that $\abs{K}$ is maximal.

\begin{figure}[h!t]
\begin{center}
\includegraphics{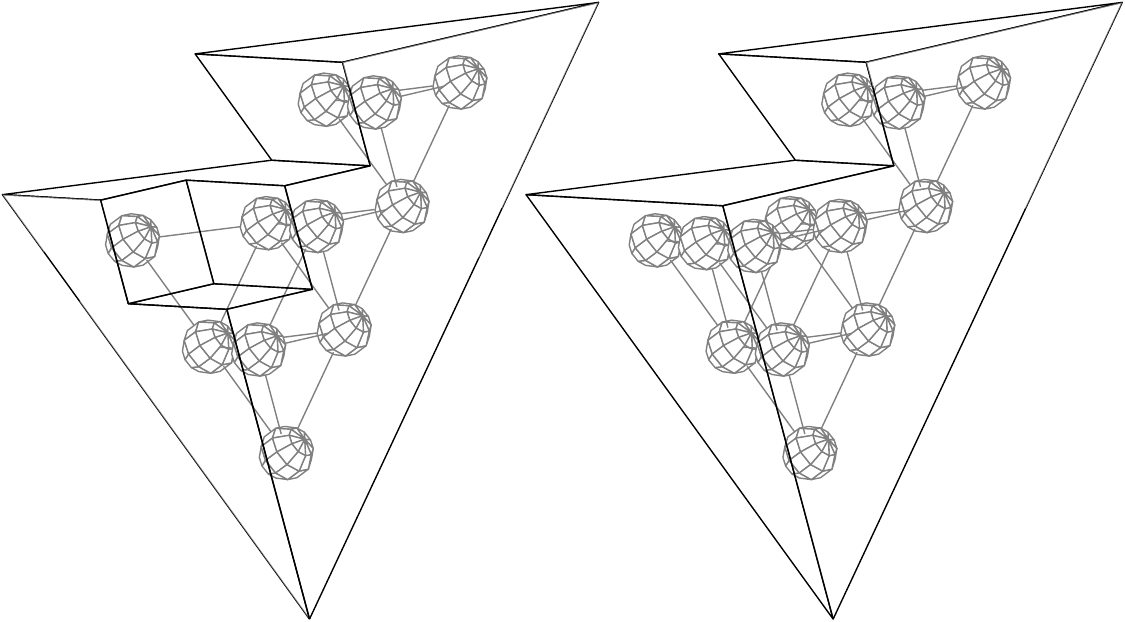}
\caption{Filling a 3-dimensional Newton diagram.\label{fig:newtonfill}}
\end{center}
\end{figure}

We apply Lemma~\ref{SC-filling} to see that
the number of surface nodes on each
of the side faces $\{a=0\}, \{b=0\}$, and $\{c=0\}$ does not increase when we
extend $K$ to $K^\prime$.
All the other vertical faces can only become smaller when we replace $K$ by
$K^\prime$, so the number of surface nodes on those faces only become
smaller.

The only faces that are left are the horizontal faces on the levels $k-1$ and
$k$. Let us say that we start with some optimal $2$-dimensional Newton
diagrams $D_j$ with supports $K_j$ corresponding to the horizontal faces at
levels $k-1$ and $k$ for set $K$. We create a configuration for the
horizontal face of $K^\prime$ at level $k$ as follows.

We keep the same Newton diagrams for the faces at level $k$ that we already
had. We replace every $0$ at level $k$ of $K$ and coordinate $(a,b,c)$, where
$a \neq 0$, by the sign of the point $(a-1, b, c)$ of the Newton diagram $D_j$
containing that point, although it is possible that we need to reverse all the
signs for a face $D_j$, as we shall see below. After we have done this
procedure, we will see that we can assign signs to the
remaining $0$-points of the form $(0, b, c)$ with $b+c = k$ in a way that does
not increase the number of nodes.

By sliding the faces at level $k$ upwards in this way there can
only be fewer internal nodes, as the faces can become smaller but not larger.
There are vertex (or edge) nodes that remain vertex (or edge) nodes so we do
not need to worry about those. However, when `gluing' these configurations to
the configurations of the old faces at level $k$ it is also possible for edge
and vertex nodes to become internal nodes, which have weight one.
This is possible in two different ways.

The first way is for an vertex node of an old $k$-level face to become an
internal node. Since the horizontal vertex had weight zero, this gives us one
extra node, but as we remarked earlier, we lose twice half a node on vertical
faces when such a vertex becomes internal, so we do not need to worry about
this case.

The other way is for an edge node of an old $k$-level face to become an
internal node. We can deal with this case by flipping the signs of old level
$k$ configurations. By doing this we get that at most half of the edge nodes
of old configurations that actually become internal nodes,
and the other edge nodes disappear, which solves this case as well.

That leaves us with the remaining $0$-points on the edge $\{(0, b, c)\mid b+c
= k\}$. As we only have to worry about the horizontal faces (as the weighted count for
the vertical faces is already taken care of) and vertex-nodes in horizontal
faces have weight zero, it is easy to assign $P$ or $N$ to each point one by
one, starting with points adjacent to non-$0$ points in the edge (which we
assumed exist), in such a way that no interior nodes are created and the
number of edge nodes remains the same. This completes the proof.
\end{proof}


\section{Sharp polynomials in three dimensions} \label{section:sharppols}

In the previous section we have obtained some information of what a Newton
diagram with minimal number of nodes looks like.  While it is not hard to construct a polynomial $Q$ corresponding to a given Newton diagram, in
general we will get many more nonzero terms in the corresponding $P \in
\sI(3)$ than there are nodes in the diagram.

We will say an indecomposable polynomial $P \in \sI(n,d)$ is \emph{sharp} if
$N(P)$ is minimal among all indecomposable polynomials $\sI(n,d)$.
We make a similar
definition for $\sH(n,d)$;
$p \in \sH(n,d)$ is \emph{sharp} if
$N(p)$ is minimal in $\sH(n,d)$.
We will concentrate on $\sH(n,d)$ in this
section since we have proved sharp bounds only for $\sH(3,d)$
and not for $\sI(3,d)$.  Also using the nonhomogeneous notation will make
the computations and the exposition simpler.  Of course, if it turns out
that indecomposable polynomials in $\sI(3,d)$ posses the same degree bounds
as $\sH(3,d)$, then the sharp polynomials we describe in this section
will also be sharp in $\sI(3,d)$.

It is an easier problem
to list all possible Newton diagrams with the sharp
number of nodes, but it is harder to decide which diagrams correspond to
actual sharp polynomials in $\sI(n,d)$ or $\sH(n,d)$.
For $n=2$ the classification of sharp polynomials is not trivial even in
$\sH(2,d)$, see~\cites{DL:complex,LL}.
We will see that classification in dimension 2
is related to classifying the sharp
polynomials in dimension 3.

It is not hard to construct certain sharp polynomials $p \in \sH(3,d)$.
That is, $p$ achieves equality in the bound
\begin{equation} \label{sharpn3:eq}
\deg(p) \leq \frac{N(p)-1}{2} .
\end{equation}
Start with
\begin{equation}
p_1(x) := s(x) = x_1 + x_2 + x_3 .
\end{equation}
Now construct
\begin{equation}
p_d(x) := p_{d-1}(x) - x_3^{d-1} + x_3^{d-1} ( s(x) - 1 ) .
\end{equation}
It is not hard to see that $p_d \in \sH(3,d)$ and that $N(p_d) = N(p_{d-1})+2$,
and therefore $N(p_d) = 2d+1$.  Thus, $p_d$ are sharp.
These polynomials are known (see for
example \cite{DLP}) as \emph{generalized Whitney polynomials}.
At each step, instead of
$x_3^{d-1}$ we can also pick an arbitrary monomial of $p_{d-1}$ of degree $d-1$.
We then obtain all the other generalized Whitney polynomials.

The Newton diagram for the Whitney polynomials is easy to see.
In the nonhomogeneous setup, where we just take
the Newton diagram of $q = \frac{p-1}{s-1}$,
the diagram is
simply a connected path of $d$ $P$s
going from $(0,0,0)$ to $(a,b,c)$ where $a+b+c = {d-1}$.  In the homogeneous
setup, the signs are not all $P$, but instead alternate between $P$ and $N$.

It is natural to ask what all the sharp polynomials look like. In
particular, one might wonder if there are there any examples
that are not generalized Whitney polynomials.  The following
construction was given in \cite{DLP}.  Start with
\begin{equation} \label{faran:eq}
x_1^3+3x_1x_2+x_2^3,
\end{equation}
which is sharp and is in $\sH(2,3)$.  Now replace $x_2$ with $x_2+x_3$ to obtain
\begin{equation} \label{faran3:eq}
\begin{split}
p(x)
&:=
x_1^3+3x_1(x_2+x_3)+(x_2+x_3)^3 \\
&\phantom{:}=
x_1^3+3x_1x_2+3x_1x_3+x_2^3 +3x_2^2x_3 + 3x_2x_3^2 + x_3^3
.
\end{split}
\end{equation}
The $p$ obtained is in $\sH(3,3)$, and furthermore it has $2d+1 = 7$ terms
and hence is sharp.  The obtained Newton diagram has maximal support,
that is, the support $K$ of the diagram contains all points $m$
such that $\abs{m} < d$.
For degrees up to 7 we can prove the following proposition.

\begin{prop} \label{filledsharp:prop}
Suppose that $p \in \sH(3,d)$ is sharp and that the induced Newton diagram
has maximal support $K$ (that is, $K$ contains all points
$m \in \N_0^3$ with $\abs{m} < d$).
Also assume that $d \leq 7$.  Then up to
permutation of variables, $p$ is equal to either $s$ or \eqref{faran3:eq}.
\end{prop}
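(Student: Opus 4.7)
The plan is to combine rigidity of the Newton diagram forced by sharpness with a finite case analysis for each $d \leq 7$. If $p \in \sH(3,d)$ is sharp and its associated Newton diagram $D$ of $Q = P/S$ has maximal support, then the number of nodes of $D$ equals $N(p)+1 = 2d+2$ (the extra one being the bottom node at the origin, which corresponds to the constant term $1$ of $p$). Revisiting the proof of the three-dimensional theorem under the additional assumption of sharpness, equality must hold at every step: the weighted surface count attains its minimum on each face, namely $\frac{d+1}{2}$ on each of the three vertical faces and $\frac{d-1}{2}$ on the horizontal top face. In addition, $D$ has no interior nodes and no cancellations occur when assembling $P = S \cdot Q$, so each node contributes a distinct monomial to $P$.

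Next I would use Lemma~\ref{prescribed} and its proof to enumerate the $2$-dimensional sharp face configurations. The lemma shows that the sharp pattern on a face with prescribed top edge is obtained by removing one corner entry at each lower level and flipping the remaining signs, which pins down most of the face once the top edge is chosen. I would then enforce compatibility of sign patterns along the six shared edges of the tetrahedron (three between the top face and the vertical faces, three between pairs of vertical faces), and determine the interior points of $D$ level by level using the absence of interior nodes together with the non-cancellation condition. For each $d \leq 7$ this produces a finite list of candidate diagrams, which a symmetry reduction under permutations of $x_1, x_2, x_3$ further shrinks.

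For each surviving candidate $D$, I would reconstruct $p$ by assigning a positive parameter $q_\beta > 0$ to every point $\beta$ in the support of $D$, with sign dictated by $D(\beta)$, and setting $p(x) = 1 + (s(x)-1)\, q(x)$. The nonzero monomials of $p$ then correspond exactly to the nodes of $D$, so requiring non-negativity of $p$ becomes a system of linear inequalities in the parameters $q_\beta$. I would check this system for solvability: for $d = 1$ the only possibility is $q \equiv 1$, giving $p = s$; for $d = 3$ the only feasible system (up to permutation of variables) corresponds to the polynomial \eqref{faran3:eq}; and for $d \in \{2, 4, 5, 6, 7\}$ no compatible sign diagram lifts to a feasible non-negative $p$, completing the classification.

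The main obstacle will be the enumeration at $d = 5, 6, 7$, where the number of vertical-face configurations and their compatible gluings grows. Fortunately the rigidity from Lemma~\ref{prescribed} and the downward sign-flipping rule, combined with the symmetry under permutations of variables and the tight inequalities coming from non-negativity of $p$, keep the list of essentially distinct cases bounded; each case can then be dispatched by an explicit check on a small linear system.
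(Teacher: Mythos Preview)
Your approach is plausible but differs substantially from the paper's, and two steps need more than you have given them.

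The paper does not enumerate Newton diagrams. Instead it invokes Lemma~\ref{undoing:lemma}: every $p \in \sH(n,d)$ can be written as $s^d + \sum_{|\alpha|<d} c_\alpha x^\alpha (1 - s^{d-|\alpha|})$. From the four face surface counts (three vertical with $SC \geq \frac{d+1}{2}$, one horizontal with $SC \geq \frac{d-1}{2}$) the paper reads off four structural constraints on a sharp $p$ with maximal support: (i) no term of degree $<d$ depends on all three variables; (ii) each pure term $x_i^d$ occurs; (iii) a weighted count of non-pure terms missing each $x_i$ is at least $\frac{d-1}{2}$; (iv) a weighted count of non-pure degree-$d$ terms is at least $\frac{d-1}{2}$. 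One then starts from $s^d$ and subtracts a bounded number of corrections $c_\alpha x^\alpha(1-s^{d-|\alpha|})$ (at most nine for $d=7$, by (i) and (iii)) compatible with these constraints, and checks the resulting finite list. This is short because each correction is a single algebraic move on $p$, not a combinatorial choice in the diagram.

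In your route, two points are understated. First, Lemma~\ref{prescribed} is an existence statement: it produces \emph{one} two-dimensional diagram with $SC = \frac{d+1}{2}$ and prescribed top row, but the ``drop the leftmost entry and flip signs'' recipe is not the unique minimizer, so you still owe a classification of all minimizing face diagrams before you can glue. Second, the lift from $D$ to $p$ is not merely a system of inequalities: sharpness forces the coefficient of $p$ at every \emph{non}-node position to vanish, which is a large system of linear \emph{equalities} in the $q_\beta$, on top of the positivity inequalities at the nodes. Both gaps are fillable for $d \leq 7$, but they make your enumeration markedly heavier than the paper's subtraction-from-$s^d$ argument.
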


We conjecture that this proposition holds for all degrees.  Before we prove
the proposition, we will need the following lemma.  See \cite{DLP} and
\cite{DKR} for proof and more discussion of these results.

\begin{lemma} \label{undoing:lemma}
Let $p \in \sH(n,d)$.
\begin{enumerate}[(i)]
\item If $p$ is homogeneous of degree $d$, then $p = s^d$.
\item Write
\begin{equation}
p(x) = \sum_{\abs{\alpha} = d} c_\alpha x^\alpha
+
\sum_{\abs{\alpha} < d} c_\alpha x^\alpha ,
\end{equation}
then we have
\begin{equation}
p(x) = s^d
+
\sum_{\abs{\alpha} < d}
c_\alpha x^\alpha
\left( 1-s^{d-\abs{\alpha}} \right) .
\end{equation}
\end{enumerate}
\end{lemma}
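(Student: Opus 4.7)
The plan is to derive (ii) from (i) once we observe that the argument for (i) uses only homogeneity and the interpolation condition $p(x)=1$ on $\{s=1\}$, not the non-negativity of the coefficients.

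For (i), I would use a direct scaling argument. For any $x \in \R^n$ with $s(x) \neq 0$, the point $y := x/s(x)$ satisfies $s(y) = 1$, so by hypothesis $p(y) = 1$, and by homogeneity of degree $d$,
\begin{equation*}
p(x) = s(x)^d \, p(y) = s(x)^d.
\end{equation*}
Hence $p$ and $s^d$ agree on the open set $\{s \neq 0\}$, and being polynomials, they agree on all of $\R^n$.

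For (ii), the key step is to construct an auxiliary homogenization of $p$ with respect to $s$. Writing $p_d(x) = \sum_{\abs{\alpha}=d} c_\alpha x^\alpha$ for the top-degree part of $p$, I would set
\begin{equation*}
\tilde{p}(x) := p_d(x) + \sum_{\abs{\alpha} < d} c_\alpha x^\alpha \, s(x)^{d-\abs{\alpha}}.
\end{equation*}
Each summand is homogeneous of degree $d$, so $\tilde{p}$ is homogeneous of degree $d$. When $s(x) = 1$, every factor $s(x)^{d-\abs{\alpha}}$ is $1$, so $\tilde{p}(x) = p(x) = 1$ on $\{s = 1\}$. Applying the argument of (i) to $\tilde{p}$ gives $\tilde{p} = s^d$. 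Solving for $p_d$ and adding back $\sum_{\abs{\alpha}<d} c_\alpha x^\alpha$ to both sides yields the asserted identity
\begin{equation*}
p(x) = s^d + \sum_{\abs{\alpha}<d} c_\alpha x^\alpha \bigl(1 - s^{d-\abs{\alpha}}\bigr).
\end{equation*}

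There is no real obstacle here; the lemma is a purely algebraic restatement of the fact that $p-1$ vanishes on $\{s=1\}$. The one subtlety worth flagging is that (i) is stated for $p \in \sH(n,d)$, whereas the auxiliary polynomial $\tilde{p}$ in the proof of (ii) may well have coefficients of either sign. What makes the argument go through is that the proof of (i) given above does not use the positivity hypothesis at all, so it applies verbatim to $\tilde{p}$.
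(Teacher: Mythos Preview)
Your proof is correct. The paper does not actually give its own proof of this lemma; it simply cites \cite{DLP} and \cite{DKR} and moves on. Your scaling argument for (i) and the $s$-homogenization trick for (ii) are exactly the standard proofs found in those references, and you have correctly identified that the positivity hypothesis plays no role in (i), which is what lets the argument apply to the auxiliary polynomial $\tilde{p}$ in (ii).
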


\begin{proof}[Proof of Proposition~\ref{filledsharp:prop}]
First homogenize $p$, then change variables to obtain
a polynomial $P$ in $\sI(3)$.  By setting each variable to zero
in turn, we obtain four 2-dimensional diagrams with maximal support for
polynomials in $\sI(2)$.

Let $D_1, \ldots, D_4$ be the 2-dimensional diagrams obtained above.
We notice that $N(P)$ is bounded below by the number
$SC(D_1)+SC(D_2)+SC(D_3)+SC(D_4)+1$.  Now each
$SC(D_i)$ is bounded below by $\frac{d+1}{2}$ using
Lemma~\ref{sharper-lemma}.

The following statements are easy to verify for sharp polynomials.
If $p \in \sH(3,d)$ is sharp and the corresponding Newton diagram
has maximal support, then:
\begin{enumerate}[(i)]
\item \label{filledsharp:proof:i}
There exists no term in $p$ of degree strictly less than $d$ depending on all
3 variables.
\item \label{filledsharp:proof:ii}
For each $i=1,2,3$, there is precisely one pure term $x_i^d$.
\item \label{filledsharp:proof:iii}
For each $i=1,2,3$, the number of non-pure terms not depending
on $x_i$ is bounded below by $\frac{d-1}{2}$ if we weight terms of
degree $d$ as $\nicefrac{1}{2}$ a term and terms of degree less than $d$
have weight 1.
\item \label{filledsharp:proof:iv}
The number of non-pure degree-$d$ terms is bounded below by
$\frac{d-1}{2}$ if we weight terms that depend on only 2 variables for
$\nicefrac{1}{2}$ a term and the terms that depend on all 3 variables for 1 term.
\end{enumerate}
The reason for weighting some terms for $\nicefrac{1}{2}$ is, of course, because
we count them twice, once when counting the degree-$d$ terms and once when
counting terms not depending on a certain variable.
The item \eqref{filledsharp:proof:i}
follows because the polynomial is sharp, and $\sum_{j=1}^4 SC(D_j)$
is already the sharp number.
Item \eqref{filledsharp:proof:ii} was noticed by
\cite{DKR}.  It follows because a diagram for
a polynomial in $\sH(2,d)$ that has maximal support
can only contain one node for a
pure term degree $d$.  If there were any other node for a pure term,
it would force a negative pure term in $p$.

Now we use Lemma~\ref{undoing:lemma}.
We start with $s^d=(x_1+x_2+x_3)^d$.  We are allowed to
subtract from $s^d$ polynomials of the form
\begin{equation} \label{eq:subtractions}
c_\alpha x^\alpha
\left( 1-s^{d-\abs{\alpha}} \right) .
\end{equation}
where $x^\alpha$ does not depend on a single variable and is of degree
$d-1$ or less.  We are only allowed to subtract polynomials of
the form \eqref{eq:subtractions} such that we do not
violate the bounds given above.
In particular, we are only allowed to subtract \eqref{eq:subtractions}
for terms $x^\alpha$
where $x^\alpha$ does not depend on all variables; that is, it is
independent of at least one variable.
For example, for $d=7$ we are allowed to do
up to 3 such subtractions for each missing variable (so altogether up to 9
subtractions).
We must ensure that the subtractions
also remove enough terms of degree $d$.

It is not hard but it is tedious to check all the possibilities for low
degrees.  We have verified the claim up to $d \leq 7$.  We leave the details
to the reader.
\end{proof}

Proposition~\ref{filledsharp:prop} together with the geometric insight from
the proof of the
main result seem to suggest to the authors that the following construction
generates all sharp $p \in \sH(3,d)$.  Start
with $p$ being either $s$ or the polynomial \eqref{faran3:eq},
possibly with permuted variables.
Then construct others inductively.
Pick a sharp $p \in \sH(3,d)$, pick a monomial $m$ in $p$ of degree $d$.
Let $g$ be either $s$ or the polynomial \eqref{faran3:eq} (possibly with
permuted variables).
Construct $\tilde{p}$ as
\begin{equation}
\tilde{p} := p-m+mg.
\end{equation}
It is easy to see that $\tilde{p} \in \sH(3,d+k)$, where $k=1$ if $g=s$
or $k=3$ if $g$ was \eqref{faran3:eq}.  Furthermore,
$\deg{\tilde{p}} = \frac{N-1}{2}$, so $\tilde{p}$ is sharp.


\section{Higher dimensions} \label{section:higherdim}

It would have been welcome if the methods that we have developed
so far were applicable in higher dimensions. Unfortunately the combinatorics in dimensions $4$ and higher is considerably different. The proof
of the main estimate cannot possibly work for the following reason.  We prove
the main sharp estimate by noting that we can in some
sense fill the whole diagram.  Such a technique must always fail in higher
dimensions.  We make the following observation.

\medskip

\emph{For $n \geq 4$, a Newton diagram $D$ with support $K$ of
size $d \geq 2$
such that $K$ contains all $m \in \N_0^n$ with $\abs{m} < k$ has too many nodes.}

\medskip

Here \emph{too many} means more than $d(n-1)+2$ nodes.  There exist
indecomposable polynomials in $\sI(n,d)$ with precisely $d(n-1)+2$ terms.
These are induced by
the generalized Whitney polynomials as described in the previous section
or in~\cite{DLP}.  These polynomials are in $\sH(n,d)$ and have
$d(n-1)+1$ positive terms.
Therefore, the sharp estimate for $n \geq 2$,
cannot be obtained by looking at a completely filled Newton diagram.

Next we verify this observation.  We will only count nodes
that appear on two-dimensional faces of the filled-in diagram.
It also suffices to only consider $n=4$.  For $n=4$, the
support has 10 two-dimensional faces.  Let $D$ be the completely filled
diagram in 4 dimensions.  Let us not count the vertex nodes
as there are exactly 5 of those.  Let us write $D_f$ for a diagram of a
face.  Each edge node of $D_f$ contributes $\nicefrac{1}{3}$ to the total
weighted count
of $\#(D)$, as we count each edge 3 times.  Each face node contributes $1$
to $\#(D)$.  If we look at the proof of Lemma~\ref{sharper-lemma}, we notice
that
\begin{equation}
\frac{1}{3} \times \text{(\# of edge nodes)}
+
\text{(\# of face nodes)} \geq
\frac{d-1}{3} .
\end{equation}
The way to minimize this number is to only have edge nodes.  We
then must have $d-1$ edge nodes.  Hence for $d > 1$ we have
\begin{equation}
\#(D) \geq 10 \left( \frac{d-1}{3} \right) + 5 > 3d+2 .
\end{equation}
As noted above,
the generalized Whitney polynomials constructed in \cite{DLP} and the
previous section for $n=4$ give diagrams with exactly $3d+2$ nodes.

In particular, no example such as \eqref{faran3:eq} exists when $n > 3$.
In \cite{DLP} it was proved that for $n$ sufficiently large compared
to the degree $d$,
all sharp polynomials in $\sH(n,d)$ are generalized Whitney polynomials.
With
the geometric insight and the techniques developed in this paper, the
authors conjecture that all sharp polynomials in $\sI(n)$,
$n \geq 4$ are Whitney.

While we are not able to prove the sharp bound for larger dimensions,
we can use the $n=2$ results to prove a degree bound (alas not sharp)
for indecomposable polynomials in $\sI(n)$ for all $n \geq 3$ without the
positivity requirement.
We will follow the pullback
procedure from \cite{DLP} to prove the following bound.

\begin{thm} \label{thm:eekbound1}
Let $P \in \sI(n)$, $n \geq 3$ be indecomposable and suppose that $P$ contains
one pure monomial.  Then
\begin{equation}
\operatorname{p-degree}(P) \leq
\frac{2n(2N(P)-5)}{3n^2-3n-2} \leq \frac{4}{3}~\frac{2N(P)-5}{2n-3}.
\end{equation}
\end{thm}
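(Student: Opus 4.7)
The plan is to reduce the $n$-variable estimate to many instances of the two-dimensional bound of Theorem~\ref{maindim2thm}, via the pullback procedure from \cite{DLP}. After permuting variables, assume $P$ contains the pure monomial $X_0^d$; this monomial is the device that prevents the pullbacks from losing p-degree.

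First, I would construct a family of linear substitutions $\phi_r\colon \R^3 \to \R^{n+1}$ indexed by pairs $\{i,j\}\subset\{0,\ldots,n\}$ (together with generic scalar parameters) such that $S\circ\phi_r = S_2 := Y_0+Y_1+Y_2$. A natural choice is
\begin{equation}
X_i = Y_1,\quad X_j = Y_2,\quad X_l = \lambda^{ij}_l\, Y_0 \text{ for } l\notin\{i,j\},
\end{equation}
with $\sum_{l\notin\{i,j\}} \lambda^{ij}_l = 1$ and the $\lambda$'s chosen in general position. Each pullback $\tilde P_r := P\circ\phi_r$ then lies in $\sI(2)$, and the pure monomial $X_0^d$ pulls back to a nonzero monomial of degree $d$ (either $Y_0^d,Y_1^d$, or $Y_2^d$ up to a nonzero constant), so $\operatorname{p-degree}(\tilde P_r) = d$.

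Second, I would verify that, for generic choices of the parameters $\lambda^{ij}_l$, each $\tilde P_r$ is indecomposable in $\sI(2)$. The idea is that a nontrivial decomposition $\tilde P_r = \tilde P_r^{(1)} + \tilde P_r^{(2)}$ with disjoint monomial supports would, outside a proper algebraic subset of parameter space, pull back a corresponding decomposition of the monomials of $P$, contradicting the indecomposability hypothesis on $P$. With indecomposability in hand, Theorem~\ref{maindim2thm} gives
\begin{equation}
d \leq 2 N(\tilde P_r) - 5
\end{equation}
for each $r$.

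Third, I would sum these inequalities over the chosen family and carry out the combinatorial count bounding $\sum_r N(\tilde P_r)$ in terms of $N(P)$. A monomial $X^\alpha$ of $P$ lands at position $(|\alpha|-\alpha_i-\alpha_j,\alpha_i,\alpha_j)$ in $\tilde P_{ij}$; two distinct monomials $\alpha\neq\alpha'$ collapse to the same position exactly when $\alpha_i=\alpha'_i$, $\alpha_j=\alpha'_j$, and $|\alpha|=|\alpha'|$. Tracking how often each monomial contributes and, crucially, how often collisions occur over the entire family, one obtains a linear bound $\sum_r N(\tilde P_r) \leq c_n N(P) + c_n'$ with the combinatorial constants chosen so that, after dividing by the number of pullbacks and rearranging, the first displayed inequality of the theorem emerges. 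The second inequality is an elementary algebraic comparison (one checks $\frac{2n}{3n^2-3n-2} \leq \frac{4}{3(2n-3)}$ for $n\geq 2$).

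The main obstacle is the genericity argument for indecomposability of the pullbacks: without it, Theorem~\ref{maindim2thm} does not apply and the whole scheme collapses. The hypothesis that $P$ contains a pure monomial is what makes this workable, since it prevents the p-degree from silently dropping to zero in a pullback and provides an anchor term that survives every $\phi_r$. The monomial-collision bookkeeping in the final step is routine but tedious, and is where the specific constants $3n^2-3n-2$ and $2n$ appear.
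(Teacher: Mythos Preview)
Your scheme has a genuine gap: you are using \emph{linear} substitutions $\phi_r$, and linear substitutions cannot produce the factor $\frac{1}{2n-3}$ that the theorem requires. Under a linear $\phi_r$ the p-degree of $\tilde P_r$ is at most $d$, so each application of Theorem~\ref{maindim2thm} yields only $d\le 2N(\tilde P_r)-5$. Summing over your $\binom{n+1}{2}$ pairs and using the trivial bound $\sum_r N(\tilde P_r)\le \binom{n+1}{2}N(P)$ gives back nothing better than $d\le 2N(P)-5$; any improvement would have to come from collision counting, and there is no mechanism in your bookkeeping that manufactures the specific constant $3n^2-3n-2$. In fact your construction is essentially the one the paper uses for the much weaker Theorem~\ref{thm:eekbound2}, where a single linear projection yields $d\le (n-1)(2N(P)-5)$.

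The actual pullback procedure from \cite{DLP}, which the paper follows here, uses a \emph{single} substitution by the sharp degree $D=2n-3$ map $\phi\colon(u,v,t)\mapsto(u^D,v^D,c_1u^{D-2}vt,\ldots,t^D)$ from \cite{DKR}. Composition with a degree-$D$ map amplifies the degree of $P$ by roughly a factor of $D$ while $N$ does not increase, so the two-dimensional bound applied to $\tilde P\circ\tilde\phi$ gives approximately $dD\le 2N(P)-5$, i.e.\ $d\lesssim\frac{2N(P)-5}{2n-3}$. The pure monomial hypothesis is not merely an anchor against the p-degree dropping to zero: it is what allows one to compute a lower bound on the p-degree of the composition. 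Concretely, the paper pairs the pure term $X_n^d$ (which contributes $t^{Dd}$) against a monomial $X_0^{a_0}\cdots X_{n-1}^{a_{n-1}}$ with no $X_n$ (which contributes a specific lower power of $t$), and the difference of $t$-exponents bounds the p-degree of $\tilde P\circ\tilde\phi$ from below by $dD-\sum_{j=2}^{n-1}(j-1)a_j$. Optimizing over the ordering of the $a_j$ is where $3n^2-3n-2$ arises. Your proposal misses this degree-amplification mechanism entirely.
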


The bound is slightly different from \cite{DLP} and \eqref{highnboundirred}
only because in the projectivized version we count one extra term.
The following proof and computation is essentially the same as in \cite{DLP}
but is adapted to our setting and slightly simplified.

\begin{proof}
Let $P \in \sI(n)$, $n \geq 3$ be indecomposable and suppose that
$P$ contains the monomial $X_n^d$.  That is we can write
\begin{equation}
P(X) = (X_0+X_1+X_2+\cdots+X_n)Q(X) .
\end{equation}
Let $D=2n-3$.
Now take the sharp degree $D$ polynomial mapping $\phi$ from~\cite{DKR},
which has the homogenized form
\begin{equation}
(u,v,t) \mapsto (u^D, v^D, c_1 u^{D-2}vt, c_2 u^{D-4} v^2t^2,\ldots,t^D ) .
\end{equation}
The precise values of $c_j$ are not important for us.
This map has exactly $n+1$ components and furthermore
\begin{equation}
u^D + v^D + c_1 u^{D-2}vt + c_2 u^{D-4} v^2t^2 + \cdots -t^D
=
(u+v-t)q(u,v,t) .
\end{equation}
If we swap $t$ for $-t$ we obtain a map $\tilde{\phi}$
\begin{equation}
(u,v,t) \mapsto (u^D, v^D, -c_1 u^{D-2}vt, c_2 u^{D-4} v^2t^2,\ldots,t^D )
\end{equation}
that is
\begin{equation}
u^D + v^D - c_1 u^{D-2}vt + c_2 u^{D-4} v^2t^2 - \cdots +(-1)^{D+1}t^D
=
(u+v+t)\tilde{q}(u,v,t) .
\end{equation}
We modify $P$ to create $\tilde{P}$
by replacing $X_2$ with $-X_2$, $X_4$ with $-X_4$ and so on
\begin{equation}
\tilde{P}(X) := (X_0+X_1-X_2+\cdots+(-1)^{D+1}X_n)\tilde{Q}(X) .
\end{equation}
The signature of
$(X_0+X_1-X_2+\cdots+(-1)^{D+1}X_n)$ is exactly the same as the signature
of $\tilde{\phi}$.  Furthermore $N(\tilde{P}) = N(P)$.  We can now
compose $\tilde{P} \circ \tilde{\phi}$ and note that
\begin{equation}
(\tilde{P} \circ \tilde{\phi})(u,v,t) = (u+v+t)q_1(u,v,t) .
\end{equation}
Furthermore, it is easy to see that $N(\tilde{P} \circ \tilde{\phi}) \leq
N(\tilde{P}) = N(P)$.  It is also not hard to see that
$\tilde{P} \circ \tilde{\phi}$ is indecomposable since $\tilde{P}$ and
$\tilde{\varphi}$ are indecomposable.  This fact can be seen by either looking
at the Newton diagram, or simply by attempting to write
$\tilde{P} \circ \tilde{\phi}$ as a sum of two polynomials in $\sI(2)$
with distinct monomials.  What remains is to estimate the
p-degree of
$\tilde{P} \circ \tilde{\phi}$ and apply the 2-dimensional result.

Suppose that
the p-degree of $P$ is $d$.  We can divide through by any common monomial
factors to
make sure that without loss of generality $P$ is of degree $d$.
Suppose that $P$ (and hence $\tilde{P}$) contains a constant
multiple of the monomial
$X_0^{a_0}\cdots X_{n-1}^{a_{n-1}}$.
Such a monomial must exist if
the p-degree of $P$ is $d$.  After composing with $\tilde{\phi}$
we see that we see that
$X_0^{a_0}\cdots X_{n-1}^{a_{n-1}}$ will have a factor
of $t^c$ where
\begin{equation}
\begin{split}
c & =
d - (a_0D + a_1 D + a_2(D-1) + \cdots + a_{n-1}(D-n+2)) \\
& =
d - \left( D \sum_{j=0}^{n-1} a_j - \sum_{j=2}^{n-1} (j-1)a_j \right) \\
& =
d - Dd + \sum_{j=2}^{n-1} (j-1)a_j .
\end{split}
\end{equation}
We have used that $\sum a_j = d$.
Since $P$ contains the term $X_n^d$ then
\begin{equation}
\operatorname{p-degree}(\tilde{P} \circ \tilde{\phi}) \geq d-c =
dD - \sum_{j=2}^{n-1} (j-1)a_j .
\end{equation}
We apply the 2-dimensional result to get
\begin{equation}
\operatorname{p-degree}(\tilde{P} \circ \tilde{\phi}) \leq
2N(\tilde{P} \circ \tilde{\phi}) - 5
\leq
2N(P) - 5 .
\end{equation}
We combine the inequalities (and use $D=2n-3$) to obtain
\begin{equation}
\begin{split}
\operatorname{p-degree}(P) = d
& \leq
\frac{2N(P) - 5 + \sum_{j=2}^{n-1} (j-1)a_j}{2n-3} \\
& \leq
\frac{2N(P) - 5}{2n-3} +
\frac{\sum_{j=2}^{n-1} (j-1)a_j}{2n-3}  .
\end{split}
\end{equation}
We can assume that $a_j$ are in decreasing order.  We estimate
\begin{equation}
\frac{\sum_{j=2}^{n-1} (j-1)a_j}{2n-3}
\leq \frac{d}{(2n-3)n} \sum_{j=2}^{n-1} (j-1) =
\frac{d}{(2n-3)n} \binom{n-1}{2} .
\end{equation}
The expression
$\frac{1}{(2n-3)n} \binom{n-1}{2}$ is easily seen to be strictly less than
1.  Therefore,
\begin{equation}
\begin{split}
d & \leq
\frac{2N(P) - 5}{2n-3} +
\frac{\sum_{j=2}^{n-1} (j-1)a_j}{2n-3}
\\
& \leq
\frac{2N(P) - 5}{2n-3} +
d
\left(\frac{1}{(2n-3)n} \binom{n-1}{2} \right) .
\end{split}
\end{equation}
After some computation we obtain
\begin{equation}
d
\leq
\frac{2n (2N(P)-5)}{3n^2 -3n -2} .
\end{equation}
To find a simpler although slightly weaker estimate we note that when $n \geq
2$ we have
$\frac{2n}{3n^2-3n-2} \leq \frac{4}{3(2n-3)}$ and therefore
\begin{equation}
d \leq \frac{4}{3} \frac{2N(P)-5}{2n-3} .
\end{equation}
\end{proof}

The theorem is enough to prove the inhomogeneous
bound \eqref{highnboundirred}
of Theorem~\ref{thm:nonhomog}.  After homogenizing the expression
$p(x)-1$, the $-1$ will contribute a pure term
required in Theorem~\ref{thm:eekbound1}.
Unfortunately, without assuming that we have a pure monomial
we cannot estimate the p-degree after composition.  We will prove
a much weaker bound using the 2-dimensional bound
in a simpler way.  However, this
bound will work for all indecomposable polynomials in $\sI(n)$.

\begin{thm} \label{thm:eekbound2}
Let $P \in \sI(n)$, $n \geq 2$ be indecomposable.
Then
\begin{equation}
\operatorname{p-degree}(P) \leq
(n-1)(2N(P)-5) .
\end{equation}
\end{thm}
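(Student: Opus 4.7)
I propose to prove Theorem~\ref{thm:eekbound2} by induction on $n \geq 2$; the base case $n = 2$ is Theorem~\ref{maindim2thm}, which yields $\operatorname{p-degree}(P) \leq 2N(P)-5 = (n-1)(2N(P)-5)$. For $n \geq 3$ the strategy is to reduce $P \in \sI(n)$ to a polynomial in $\sI(n-1)$ via a linear substitution that collapses two carefully chosen variables into one, and then to apply the inductive hypothesis.

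For the inductive step, after dividing out any common monomial factor of $P$ I may assume $\deg(P) = d$. Because $P$ then has no common factor in any $X_k$, for each $k \in \{0, \ldots, n\}$ there is a monomial $X^{\alpha^{(k)}}$ of $P$ with $\alpha^{(k)}_k = 0$. Fixing an index $i$, the identity $\sum_{j \neq i} \alpha^{(i)}_j = |\alpha^{(i)}| \leq d$ together with pigeonhole supplies an index $j \neq i$ with $\alpha^{(i)}_j \leq d/n$. I then define the one-parameter family of linear substitutions $\phi_a \colon \R^n \to \R^{n+1}$ that merges $X_i$ and $X_j$ into a single new variable $Y_{n-1}$ via $X_i = a Y_{n-1}$, $X_j = (1-a) Y_{n-1}$, and $X_k = Y_k$ (after appropriate relabeling) for $k \neq i,j$. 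Since $\sum_k X_k = \sum_k Y_k$, the composition $\tilde P_a := P \circ \phi_a$ lies in $\sI(n-1)$. For generic $a$ no coefficient cancellation occurs when monomials of $P$ collapse, so $N(\tilde P_a) \leq N(P)$; moreover $\deg(\tilde P_a) = d$, and the common exponent of $Y_{n-1}$ in $\tilde P_a$ equals $\min_\alpha(\alpha_i + \alpha_j) \leq \alpha^{(i)}_j \leq d/n$ (with zero common exponent in the other $Y_k$), so $\operatorname{p-degree}(\tilde P_a) \geq d(n-1)/n$.

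The main obstacle is ensuring $\tilde P_a$ inherits indecomposability from $P$, so that the inductive hypothesis applies. I plan to argue by contradiction: a decomposition $\tilde P_a = \tilde P_1 + \tilde P_2$ into disjoint-monomial summands in $\sI(n-1)$ induces a partition of the monomials of $P$, which is a discrete combinatorial datum and therefore remains constant for $a$ in some Zariski-open set. Lifting this partition yields $P = P_1 + P_2$ with disjoint monomials such that $P_\ell \circ \phi_a \in \sI(n-1)$ for all $a$ in the open set. As $a$ varies, the images $\phi_a(\{\sum_k Y_k = 0\})$ sweep out the entire hyperplane $\{S=0\} \subset \R^{n+1}$, so each $P_\ell$ must vanish on $\{S=0\}$, i.e.\@ $P_\ell \in \sI(n)$, contradicting the indecomposability of $P$.

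With indecomposability settled, the inductive hypothesis applied to $\tilde P_a$ gives $\operatorname{p-degree}(\tilde P_a) \leq (n-2)(2N(\tilde P_a)-5) \leq (n-2)(2N(P)-5)$. Combining with the lower bound $\operatorname{p-degree}(\tilde P_a) \geq d(n-1)/n$ yields $d \leq \tfrac{n(n-2)}{n-1}(2N(P)-5)$. Since $\tfrac{n(n-2)}{n-1} = (n-1) - \tfrac{1}{n-1} < n-1$, the stated bound $\operatorname{p-degree}(P) = d \leq (n-1)(2N(P)-5)$ follows.
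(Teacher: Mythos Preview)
Your argument is correct and follows a genuinely different route from the paper. The paper reduces from $\sI(n)$ directly to $\sI(2)$ in one step: it keeps two variables $X_0,X_1$, replaces the remaining $n-1$ variables by $X_k T$ with the $X_k$ fixed generically on $\{X_2+\cdots+X_n=1\}$, and then applies Theorem~\ref{maindim2thm}; indecomposability of the resulting $\tilde P\in\sI(2)$ is argued by choosing the parameters so that no monomial of $Q=P/S$ cancels, whence the Newton-diagram support stays connected. You instead collapse only two variables and induct on $n$, and your preservation of indecomposability is handled differently and, in fact, more carefully: you vary the mixing parameter $a$, observe that the finitely many possible monomial partitions each cut out a Zariski-closed condition on $a$, so one partition must be valid for all $a$, and then use the family $\phi_a$ to lift the decomposition back to $P$. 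Two small points to tighten: first, the images $\phi_a(\{\sum Y_k=0\})$ do not sweep out \emph{all} of $\{S=0\}$ (they miss the locus $X_i+X_j=0$, $(X_i,X_j)\neq(0,0)$), only a Zariski-dense subset, but that suffices since the $P_\ell$ are polynomials; second, the ``discrete datum constant on a Zariski-open set'' step should be spelled out via the finitely-many-closed-conditions reasoning just mentioned. As a bonus, your recursion proves more than stated: taking $d\leq \tfrac{n}{2}(2N(P)-5)$ as the inductive hypothesis, the identical computation closes the induction, and for $n\geq 3$ this is strictly sharper than $(n-1)(2N(P)-5)$.
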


\begin{proof}
Without loss of generality, suppose that the monomials of $P$ have no
common monomial divisor.  Then we note that there must exist monomials
of the form
\begin{equation}
X_1^{a_1}X_2^{a_2}\ldots X_{n}^{a_{n}}
\qquad \text{and} \qquad
X_0^{b_0}X_2^{b_2}\ldots X_{n}^{b_{n}} .
\end{equation}
We define
\begin{equation}
\tilde{P}(X_0,X_1,T) :=
P(X_0,X_1,X_2T,X_3T,\ldots,X_nT) .
\end{equation}
When we fix $X_2 + X_3 + \cdots + X_n = 1$, then $\tilde{P} \in \sI(2)$.
Define $X' = (X_2,\ldots,X_n)$ for ease of notation.
We want to pick a fixed $X' = X_2, \ldots, X_n$ such that for every
monomial $X_0^iX_1^j (X')^\alpha$ with a nonzero coefficient in
$Q = P/(X_0+\cdots+X_n)$, there exists a monomial
$X_0^iX_1^jT^{d-i-j}$ in $\tilde{Q} = \tilde{P}/(X_0+X_1+T)$ with a nonzero
coefficient as well.

It is not hard to see that such an $X'$ exists
if and only if the
following statement is true: \emph{If a homogeneous polynomial $H(X')$
is zero on the set
$X_2 + X_3 + \cdots + X_n = 1$, then $H$ is identically zero}.  To prove this
statement, note that an open set of points in $\R^{n-2}$ can be written as $tX'$
for some $X'$ on the hyperplane $X_2+\cdots+X_n=1$.  Using homogeneity
if $H$ as $H(tX') = t^k H(X')$ the claim is proved.

By the claim it is easy to see that $\tilde{P}$ is indecomposable if
$P$ was indecomposable, since no nodes can ``disappear'' by a proper choice
of $X'$.  Obviously $N(P) \geq N(\tilde{P})$ and hence it suffices to
estimate the p-degree of $\tilde{P}$.

It is possible that even after every possible reordering of the variables,
the p-degree of $\tilde{P}$ is strictly smaller than the p-degree of $P$.
What we can do however is to reorder the variables to make the p-degree drop
the least.  That is, it is easy to see that we can pick an ordering of
variables such that $a_1 \geq \frac{d}{n-1}$ where $d =
\operatorname{p-degree}(P)$.  Since monomials of $P$ had no common divisor,
then the only possible common divisor of $\tilde{P}$ is $T^k$ for some $k$.
As $a_1 \geq \frac{d}{n-1}$ then
$k \leq d - \frac{d}{n-1}$, or in other words
the p-degree of $\tilde{P}$ must be at least $\frac{d}{n-1}$.  Therefore we
estimate (using the 2-dimensional result).
\begin{equation}
\operatorname{p-degree}(P)
\leq (n-1)\operatorname{p-degree}(\tilde{P})
\leq (n-1)(2N(P)-5) .
\end{equation}
\end{proof}

We can also use the ideas in this paper to prove the following interesting
result, which suggests the sharp bounds in $\sH(n,d)$ for all $n$.

\begin{thm}\label{elim-thm}
Let $p \in \sH(n,d)$, $n \geq 3$, then for all $j = 1,\ldots,n$,
at least $d$ terms of $p$ depend on $x_j$.
\end{thm}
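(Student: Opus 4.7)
Fix $j = n$ by symmetry; the goal is $N_n(p) \ge d$, where $N_n(p)$ is the number of monomials of $p$ depending on $x_n$. I work in the projective setting of \S~\ref{section:projprob}: let $P = SQ \in \sI(n)$ be the polynomial associated to $p$, and let $D$ be the Newton diagram of $Q$ with support $K \subset \N_0^n$. By Lemma~\ref{lemma:connectedness} and Lemma~\ref{lemma:overhang}, $K$ is connected, of size $d$, contains the origin, and has no overhang. The monomials of $p$ depending on $x_n$ correspond (apart from the constant-term node $X_0^d$, which has $\alpha_n = 0$) to active nodes $E(\alpha)$ of $D$ with $\alpha_n > 0$; each such active node is the witness of a distinct nonzero monomial of $P$ involving $X_n$, so it suffices to exhibit at least $d$ active nodes with $\alpha_n > 0$.

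Slice $K$ by levels $L_k := K \cap \{m_n = k\}$ and set $M := \max\{m_n : m \in K\}$. Since adjacency in $D$ changes $m_n$ by at most one, connectedness of $K$ together with $(0,\ldots,0) \in L_0$ forces every $L_k$ with $0 \le k \le M$ to be nonempty. An active node $E(\alpha)$ with $\alpha_n = k$ consists of $n$ upper lattice points at height $m_n = k$ together with one apex at height $m_n = k-1$. At the top level $k = M+1$ the upper points automatically lie outside $K$ (so $D = 0$ there), and every point of $L_M$ serves as the apex of a unique, automatically active node; this already contributes $|L_M|$ nodes. For each intermediate level $1 \le k \le M$, I estimate the number of active nodes on the pair $(L_{k-1}, L_k)$ by a weighted surface count in the style of Lemma~\ref{sharper-lemma}, adapted to the slice geometry; the no-overhang condition plays the role it played in \S~5, ruling out the sign configurations on adjacent slices that would otherwise cancel potential active nodes.

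Summing these per-level lower bounds from $k = 1$ to $k = M+1$ and combining with the fact that $K$ has size $d$---so there is a connected path in $K$ from the origin to a point of norm $d-1$, which must traverse the slices on its way up---yields the total bound $\sum_{k=1}^{M+1}(\text{active nodes at level } k) \ge d$. Sharpness is witnessed by the Whitney polynomials of \S~\ref{section:sharppols}: fixing any index $i \neq n$ and iterating the Whitney step with peel monomials $x_i^{k-1}$ produces a polynomial in $\sH(n,d)$ with exactly $d$ monomials depending on $x_n$, namely $x_n, x_i x_n, \ldots, x_i^{d-1} x_n$.

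The main obstacle I expect is the intermediate per-level estimate for $1 \le k \le M$: one has to track carefully how the sign data on $L_{k-1}$ and $L_k$ interact, ensuring the weighted active-node count meets the required threshold even in the presence of possible cancellations. This parallels the delicate bookkeeping in the 3-dimensional proof, and the no-overhang condition is again the essential tool for ruling out pathological slice configurations.
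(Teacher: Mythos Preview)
Your proposal has a genuine gap: the central per-level estimate is never actually established. You write that for each $1 \le k \le M$ you ``estimate the number of active nodes on the pair $(L_{k-1}, L_k)$ by a weighted surface count in the style of Lemma~\ref{sharper-lemma}, adapted to the slice geometry,'' but you never state what bound this yields, and you never prove it. Without a concrete per-level inequality there is nothing to sum, and your concluding sentence ``yields the total bound $\ge d$'' is unsupported. You yourself flag this as ``the main obstacle''; as written it is not an obstacle inside a proof but the entire missing proof. Two further technical issues compound this: your slices $L_k$ are $(n-1)$-dimensional, not $2$-dimensional, so Lemma~\ref{sharper-lemma} (which is about maximal $2$-dimensional diagrams) does not apply even in spirit without substantial new work; and you invoke Lemma~\ref{lemma:overhang} for general $n$, but that lemma is stated and proved only for $2 \le n \le 3$.

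The paper's argument avoids all of this by a different and much shorter route. First it reduces to $n=3$ by setting variables equal (which preserves membership in $\sH$ and the degree), so the general case is immediate once $n=3$ is done. Then, rather than slicing by the coordinate $m_j$, it produces \emph{two} genuine $2$-dimensional Newton diagrams $D_1, D_2$ by viewing the $3$-dimensional diagram from two sides: $D_1(a,b)$ records the sign at the smallest $k$ with $D(a,k,b-k)\neq 0$, and $D_2$ uses the largest such $k$. Each $D_j$ corresponds to a polynomial in $\sH(2,d)$, so Lemma~\ref{sharper-lemma} applies directly to give $SC(D_j) \ge \frac{d+1}{2}$; after discarding the vertex node on the $x_1$-free edge this becomes $\frac{d}{2}$. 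The key observation is that a face node of $D_1$ and a face node of $D_2$ can never come from the same node of $D$, so adding the two counts gives at least $d$ nodes of $D$ corresponding to monomials with $x_1$-exponent $\ge 1$. This two-view projection trick is the idea your proposal is missing.
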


Note that such a theorem is not true in $n=2$ and the proof suggests why.

\begin{proof}
Suppose that $j=1$.  We will construct two 2-dimensional Newton diagrams
where one variable corresponds to $x_1$ and the other variable corresponds to
all the other variables.  Let us assume that $n=3$.  The proof for $n \geq 3$
then follows by setting variables equal to each other.

We could ``view'' the 3-dimensional diagram from different ``angles'' to
obtain 2-dimensional diagrams.  More precisely take a 3-dimensional
diagram $D$ induced by $p$.  We define a 2-dimensional diagram $D_1$
as follows.  We let $(a,b)$ in $D_1$
be zero if $(a,k,b-k)$ is zero in $D$ for all $k$.  Otherwise, we find
the smallest $k$ such that
$(a,k,b-k)$ is nonzero.  We let $(a,b)$ in $D_1$ equal the value of
$(a,k,b-k)$.
We define $D_2$ similarly, but we take the largest possible $k$ above.
See Figure~\ref{fig:newtonview}.

\begin{figure}[h!t]
\begin{center}
\includegraphics{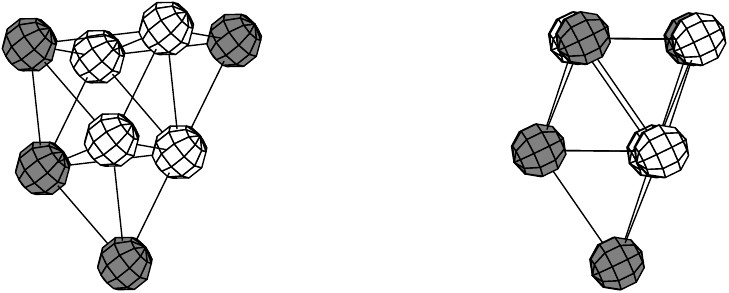}
\caption{Defining a 2-dimensional Newton diagram as a ``view'' of
a 3-dimensional diagram.\label{fig:newtonview}}
\end{center}
\end{figure}

As $p \in \sH(3,d)$, we note that the diagrams $D_1$ and $D_2$ are
diagrams corresponding to polynomials in $\sH(2,d)$.  That is,
there would be no negative terms.

We look at the 2-dimensional diagram $D_j$, $j=1,2$.
We recall a fact noticed by D'Angelo, Kos and Riehl~\cite{DKR},
that we used before.
We note that
there can only be two nodes on the edge $(0,b)$, otherwise
the diagram could not correspond to a polynomial in $\sH(2,d)$ (a negative
term would appear).

By Lemma~\ref{sharper-lemma}
we have the estimate $SC(D_j) \geq \frac{d+1}{2}$.  Not counting
the vertex node on the edge $(0,b)$, this number drops to $\frac{d}{2}$.
It is not hard to see that
each face node in $D_1$ corresponds to a node in $D$ that
does not correspond to any node in $D_2$.  On the other hand, an edge or
a vertex node in $D_1$ could correspond to the same node in $D$
as does an edge or a vertex node in $D_2$.

Hence, there must be at least $2 \frac{d}{2} = d$ nodes in $D$
corresponding to monomials $x_1^ax_2^bx_3^c$ with $a \geq 1$,
that is, nodes that correspond to
terms that depend on the variable $x_1$.  This completes the proof.
\end{proof}

In the case when $p$ has a monomial that depends on at most three variables
we immediately obtain the sharp estimate.

\begin{cor}\label{elim-corollary}
Let $p \in \sH(n,d)$, $n \geq 3$ and assume that $p$ has a monomial term
of degree $d$ that depends on at most three variables. Then
\begin{equation}
d \leq \frac{N(p) - 1}{n-1}.
\end{equation}
\end{cor}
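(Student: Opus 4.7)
The plan is to induct on the number of variables $n \geq 3$. The base case $n = 3$ is immediate from Theorem~\ref{thm:nonhomog}(ii), since the bound $d \leq (N(p)-1)/2$ is exactly what is claimed.

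For the inductive step with $n \geq 4$, let $m$ be a degree-$d$ monomial of $p$ depending on at most three variables. After relabeling we may assume those variables are among $x_1, x_2, x_3$, so $m$ does not involve $x_n$. I would then form $\tilde{p}(x_1, \ldots, x_{n-1}) := p(x_1, \ldots, x_{n-1}, 0)$, which lies in $\sH(n-1, d)$: it has non-negative coefficients, equals $1$ on $x_1 + \cdots + x_{n-1} = 1$ (the restriction of $s=1$ to $x_n = 0$), and has degree exactly $d$ because the monomial $m$ is preserved by the substitution. Moreover, $\tilde{p}$ still possesses a degree-$d$ monomial (namely $m$) depending on at most three variables, so the inductive hypothesis applies and gives $N(\tilde{p}) \geq (n-2)d + 1$.

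The monomials counted by $N(\tilde{p})$ are exactly the monomials of $p$ that do not depend on $x_n$. By Theorem~\ref{elim-thm}, which applies since $n \geq 3$, at least $d$ monomials of $p$ do depend on $x_n$. These two families are disjoint, so $N(p) \geq N(\tilde{p}) + d \geq (n-1)d + 1$, which is equivalent to the desired bound $d \leq (N(p)-1)/(n-1)$.

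The subtle point, and the reason the hypothesis on the special monomial $m$ is essential, is to ensure that the reduction $p \mapsto \tilde{p}$ does not cause the degree to drop below $d$. Without a degree-$d$ monomial localized in at most three variables, the successive substitutions $x_n = 0$, $x_{n-1} = 0$, \ldots\ used implicitly by the induction could strip away every term of top degree, placing the reduced polynomial in some $\sH(n-1, d')$ with $d' < d$ and rendering the inductive bound too weak to combine with Theorem~\ref{elim-thm}.
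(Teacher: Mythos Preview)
Your proof is correct and follows essentially the same approach as the paper: induct on $n$, set to zero a variable not appearing in the special degree-$d$ monomial, invoke Theorem~\ref{elim-thm} to show at least $d$ terms are lost, and apply the inductive hypothesis to the restricted polynomial. Your write-up is in fact slightly more careful than the paper's in explicitly verifying that $\tilde{p}\in\sH(n-1,d)$ and that no cancellation of monomials can occur upon restriction.
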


\begin{proof}
The statement follows by induction on $n$. When $n=3$ we have already proved
the estimate.  Without loss of generality assume
that $p$ contains a monomial $x_1^a x_2^b x_3^c$ with $a+b+c=d$.
When $n \geq 4$ we can set $x_k = 0$ for some $k > 3$. By
Theorem \ref{elim-thm} we lose at least $d$ terms and by the assumption
that $p$ has the term $x_1^a x_2^b x_3^c$ the degree remains $d$.
Applying the induction
hypothesis for $n-1$ variables completes the proof.
\end{proof}

Unfortunately for a general polynomial the degree may drop when a variable is eliminated which means the proof of Corollary \ref{elim-corollary} does not work for all $p \in \sH(n,d)$.


\section{Connection to CR geometry} \label{section:CR}

As we mentioned in the introduction,
the primary motivation for this work
comes from CR geometry.  Let us
describe this connection in detail.

A basic question in CR geometry is to classify CR maps $f \colon M \to M'$
of CR manifolds $M$ and $M'$ of possibly different dimensions.  The sphere
and the hyperquadrics are the simplest examples of CR manifolds.  Under
enough regularity assumptions, for general $M$ and $M'$ few if any such maps
exist.
However,
the sphere and the hyperquadrics admit a lot of symmetry that
allows many inequivalent maps to exist.
Much effort has
been expended in the CR geometry community
in understanding the case when one or both of $M$ and $M'$
are spheres and hyperquadrics.
For
the sphere case, see for
example~\cites{L:hq23,DLP,DKR,HJX,DAngelo:CR,D:prop1,D:poly,DL:families,DL:complex}
and the references within.  For the general hyperquadric case
see~\cites{BH,HJX,BEH}
and the references within.

First define the hyperquadric $Q(a,b) \subset \C^n$, where $a+b=n$, by
\begin{equation}
Q(a,b) := \{ z \in \C^n \mid \sum_{k=1}^a \abs{z_k}^2 -
\sum_{k=a+1}^{a+b} \abs{z_k}^2
= 1 \} .
\end{equation}
Note that $Q(n,0)$ is the sphere $S^{2n-1}$.  We can also regard $Q(a,b)$
as a subset of the complex projective space $\CP^n$ by replacing
the 1 on the right hand side by $\abs{z_0}^2$.

Given any real algebraic manifold $M \subset \C^n$ we can take a
real polynomial $p$ vanishing on $M$ and write
\begin{equation}
p(z,\bar{z}) = \norm{f(z)}^2 - \norm{g(z)}^2 ,
\end{equation}
where $f$ and $g$ are mappings to some Hilbert spaces $\C^a$ and $\C^b$
and $\norm{\cdot}$ denotes the standard Hilbert space norm on $\C^a$ and $\C^b$.
If we homogenize the map $(f,g)$ we get a homogeneous polynomial map
to $\C^{a+b}$.
Hence $p$ gives rise to a rational CR map of $M$ to $Q(a,b-1) \subset
\CP^{a+b-1}$.
See
\cites{DAngelo:CR,L:hq23} for more on this construction.

When $M = M' = S^{2n-1} \subset \C^n$, $n \geq 2$,
then any nonconstant CR map $f$ must be a linear
fractional automorphism of the unit ball~\cite{Pincuk}.
Faran~\cite{Faran:B2B3} classified all sufficiently smooth
maps of $S^3$ to $S^5$.
The first author~\cite{L:hq23} recently
finished Faran's classification for all rational hyperquadric maps
in dimensions $2$ and $3$.
Forstneri\v{c}~\cite{Forstneric} proved that any $C^\infty$ CR map
of spheres is rational and that
the degree of $F$ is bounded in terms of $n$ and $N$.

When every component of a map is a single monomial, we say the map is
monomial.  While there exist maps that are not equivalent to
monomial maps, the monomial maps are a rich subset of examples, which exhibit
much of the combinatorial difficulty of the classification problem.
Given a CR map $f \colon S^{2n-1} \to S^{2N-1}$ for
$n \geq 3$ and the codimension $N-n$ sufficiently small, all
such maps are equivalent to quadratic monomial maps.
See \cite{Faran:lin} and others,
most recently \cite{HJX}.  The first author proved
in \cite{L:hq23} that all quadratic $f$ are equivalent to monomial maps.
While there exist degree 3 rational maps that are not equivalent
to monomial maps~\cites{FHJZ,L:hq23}, all the known examples are at least
homotopic to a monomial map.

D'Angelo
classified the polynomial CR sphere maps~\cite{D:poly} in the
following sense.  If we allow the target dimension to be large enough,
all polynomial
proper maps are obtained by a finite number of operations from
the unique homogeneous sphere map of same degree.
See also the survey \cite{Forstneric:survey}
or the book \cite{DAngelo:CR} for
more information about the problem.
D'Angelo conjectured that if $f \colon S^{2n-1} \to S^{2N-1}$ is a rational
CR map, then
\begin{equation} \label{eqcrbnd}
\deg f \leq
\begin{cases}
2N-3 & \text{ if $n=2$} \\
\frac{N-1}{n-1} & \text{ if $n \geq 3$}
\end{cases} .
\end{equation}
If true, this bound is sharp; there exist maps that achieve equality in the
bound.  In fact, there exist monomial maps that achieve equality.
No such bound exists when $n=1$; $z \mapsto z^d$ takes
$S^1$ to $S^1$ and is of arbitrary degree.

In \cite{DKR}, D'Angelo, Kos, and Riehl proved the bound \eqref{eqcrbnd}
for monomial maps when $n=2$.  One of the main results of this paper
extends their results to $n=3$.

To extend such bounds to hyperquadric maps, we need an extra condition.  It
is not hard to construct unbounded hyperquadric maps, for example
\begin{equation} \label{reduciblemapex}
[z_0,z_1,z_2] \mapsto
[z_2^{d-1}z_0, z_2^{d-1}z_1,
z_0^d, z_0^{d-1}z_1, z_2^d, z_0^{d-1}z_2]
\end{equation}
is the homogenized rational CR map taking $S^3$ to
$Q(4,1)$ and is of arbitrary degree $d$.  We simply compose the map
with the defining equation for $Q(4,1)$ in homogeneous coordinates
for $\CP^5$ and find
\begin{multline}
\abs{z_2^{d-1}z_0}^2+\abs{z_2^{d-1}z_1}^2+\abs{z_0^d}^2
+\abs{z_0^{d-1}z_1}^2-\abs{z_2^d}^2-\abs{z_0^{d-1}z_2}^2
=
\\
=
(\abs{z_2}^2)^{d-1}(\abs{z_0}^2+\abs{z_1}^2-\abs{z_2}^2)
+
(\abs{z_0}^2)^{d-1}(\abs{z_0}^2+\abs{z_1}^2-\abs{z_2}^2) .
\end{multline}
The map
$z \mapsto [z_2^{d-1}z_0, z_2^{d-1}z_1, z_2^{d}]$ is equivalent
to the map
$z \mapsto [z_0, z_1, z_2]$.  Similarly
$z \mapsto [z_0^{d}, z_0^{d-1}z_1, z_0^{d-1}z_2]$ is also equivalent
to $z \mapsto [z_0, z_1, z_2]$.
Therefore, in homogeneous coordinates, the map \eqref{reduciblemapex}
is simply the direct sum of two representatives of the linear map.
And the linear map of course takes $S^3$ to $S^3$.
Of course the map depends on which representatives we pick.  For different
representatives we get different maps.
We will call maps such as the map \eqref{reduciblemapex} decomposable.

\begin{defn} \label{def:indecomposable}
A rational map $f \colon Q(a,b) \to Q(c,d)$ is said to be \emph{decomposable}, if
the induced homogeneous polynomial map $F$ can be decomposed as
\begin{equation}
F = G \oplus H ,
\end{equation}
where $G$ and $H$ are homogeneous polynomial maps that induce
rational maps
$g \colon Q(a,b) \to Q(c_1,d_1)$ and
$h \colon Q(a,b) \to Q(c_2,d_2)$.
If $f$ is not decomposable, it is said to be \emph{indecomposable}.

If $f$ is a monomial map (that is, if we can pick homogeneous coordinates
such that every component of $F$ is a monomial), and there exist monomial
$G$ and $H$ as above, then
$f$ is said to be \emph{monomial-decomposable}.
If $f$ is not monomial-decomposable, it is said to be
\emph{monomial-indecomposable}.
\end{defn}

It is not hard to see that a CR map of spheres must be indecomposable.
Therefore, results that apply to indecomposable hyperquadric maps in general
carry over to maps of spheres.

We now connect monomial CR maps to the real algebraic setup
of this paper.  Suppose that a monomial CR map $f \colon Q(a,b) \to Q(c,d)$
is written in
homogeneous coordinates.
\begin{equation}
z \mapsto [f_1(z),\ldots,f_{N+1}(z)].
\end{equation}
As $f$ is monomial, assume that for some multi-index $\alpha_k$
we have
\begin{equation}
f_k(z) = C_{\alpha_k} z^{\alpha_k} .
\end{equation}
Take a defining function in homogeneous coordinates for the
target hyperquadric $Q(c,d)$
\begin{equation}
\sum_{k=1}^c \abs{w_k}^2 -
\sum_{k=c+1}^{c+d+1} \abs{w_k}^2 .
\end{equation}
Now compose $f$ with this defining function to obtain
\begin{equation}
\sum_{k=1}^c \abs{f_k(z)}^2 -
\sum_{k=c+1}^{c+d+1} \abs{f_k(z)}^2 .
\end{equation}
By replacing $\abs{z_0}^2$ with $x_0$,
$\abs{z_1}^2$ with $x_1$, and so on, we obtain
\begin{equation} \label{realpolyversioneq}
\sum_{k=1}^c \abs{C_{\alpha_k}}^2 x^{\alpha_k}
 -  \sum_{k=c+1}^{c+d+1} \abs{C_{\alpha_k}}^2 x^{\alpha_k} .
\end{equation}
The defining equation for the source hyperquadric $Q(a,b)$ becomes
\begin{equation} \label{sourcedefeq}
\sum_{k=1}^a x_k -
\sum_{k=a+1}^{a+b+1} x_k .
\end{equation}
Hence, \eqref{realpolyversioneq} is zero whenever
\eqref{sourcedefeq} is zero.

If none of the monomials of $f$ are repeated (that is if
two components of $f$ are the same monomial up to a constant multiple),
it is not hard to see that the degree of
\eqref{realpolyversioneq} is equal to the degree of $f$.

\begin{prop} \label{CRmontoreal:prop}
Let $f \colon Q(a,b) \to Q(c,d)$ be a monomial CR map.  Suppose that
the components of $f$ are linearly independent.
\begin{enumerate}[(i)]
\item \label{CRmontoreal:prop:i}
The degree of \eqref{realpolyversioneq} is equal to the degree of $f$.
\item \label{CRmontoreal:prop:ii} The map $f$ is monomial-indecomposable
if and only if
\eqref{realpolyversioneq} is indecomposable in the sense of
Definition~\ref{defn:basicdefs}.
\end{enumerate}
\end{prop}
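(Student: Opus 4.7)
My approach is to observe that both claims essentially unpack the definitions, with the linear independence of the components providing the crucial bijection between the monomial components of $f$ and the monomials of the real polynomial \eqref{realpolyversioneq}. The first step is to note that, since $f_k(z) = C_{\alpha_k} z^{\alpha_k}$, two components are linearly dependent precisely when their multi-indices agree. Hence linear independence of $\{f_1,\ldots,f_{c+d+1}\}$ is equivalent to the multi-indices $\alpha_1,\ldots,\alpha_{c+d+1}$ being pairwise distinct.

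For part \eqref{CRmontoreal:prop:i}, the distinctness of the multi-indices means that in
\begin{equation}
\sum_{k=1}^{c}\abs{C_{\alpha_k}}^2 x^{\alpha_k}-\sum_{k=c+1}^{c+d+1}\abs{C_{\alpha_k}}^2 x^{\alpha_k},
\end{equation}
each monomial $x^{\alpha_k}$ occurs exactly once with a nonzero coefficient $\pm \abs{C_{\alpha_k}}^2$, so no cancellation is possible. Since the polynomial map $F$ defining a map of projective hyperquadrics has all components of a common homogeneous degree $d=\deg f$, the real polynomial is homogeneous of degree $d$, giving (i).

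For part \eqref{CRmontoreal:prop:ii}, the same distinctness identifies the index set $\{1,\ldots,c+d+1\}$ with the set of monomials of \eqref{realpolyversioneq}, each carrying a $+$ or $-$ sign (according to whether $k\leq c$ or $k>c$). A monomial decomposition $F=G\oplus H$ corresponds exactly to a partition of this index set into two nonempty subsets $A\sqcup B$ such that the subsums
\begin{equation}
P_A = \sum_{k\in A}\pm\abs{C_{\alpha_k}}^2 x^{\alpha_k},\qquad P_B = \sum_{k\in B}\pm\abs{C_{\alpha_k}}^2 x^{\alpha_k}
\end{equation}
both vanish on the source defining equation, i.e., both lie in $\sI$; the positive and negative parts of $P_A$ then record precisely the sign counts $(c_1,d_1)$ of the target hyperquadric of $g$, and similarly for $P_B$ and $h$. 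Because the monomials $x^{\alpha_k}$ are pairwise distinct, $P_A$ and $P_B$ automatically share no monomials, which is exactly the condition appearing in Definition~\ref{defn:basicdefs}. The reverse direction is identical: given a decomposition $P = P_1 + P_2$ in $\sI$ with no monomials in common, assign each component $f_k$ to $G$ or $H$ according to whether $x^{\alpha_k}$ appears in $P_1$ or $P_2$.

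No part of this argument presents a genuine obstacle; the only point requiring care is the bookkeeping of signs, ensuring that the positive/negative split of the components of $F$ is inherited by $G$ and $H$ so that each really maps into a hyperquadric of the appropriate signature. Linear independence is used in only one place but is essential: without it, two components with the same exponent could produce a single term in \eqref{realpolyversioneq}, breaking both the degree count and the tautological correspondence between index subsets and partitions of the monomial set.
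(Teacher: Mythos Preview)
Your proof is correct and follows essentially the same approach as the paper's own proof: both reduce everything to the observation that linear independence of monomial components is equivalent to the multi-indices $\alpha_k$ being pairwise distinct, which then yields a bijection between components of $f$ and monomials of \eqref{realpolyversioneq}. The paper's proof is terser, simply noting that ``each component of $f$ produces a different monomial'' and declaring the statement to follow, whereas you spell out the partition $A\sqcup B$ and the bookkeeping of signatures explicitly; but the underlying argument is identical.
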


\begin{proof}
To prove \eqref{CRmontoreal:prop:i}, notice that for a monomial
map the components of $f$ are linearly independent if and only if
no monomial of $f$ is repeated.  Then it is obvious that the degree
of $f$ must be equal to the degree of the induced polynomial
\eqref{realpolyversioneq}.

To prove \eqref{CRmontoreal:prop:ii}, notice that as the components of $f$
are linearly independent, each component of $f$ produces a different monomial
in \eqref{realpolyversioneq}.  The statement follows.
\end{proof}

Using Proposition~\ref{CRmontoreal:prop} we can apply results of this paper
to monomial CR maps of hyperquadrics.  Therefore the proof of
Theorem~\ref{thm:CRbounds} is a trivial application of this Proposition
and the results of this paper.  In particular, part \eqref{thm:CRbounds:i}
of Theorem~\ref{thm:CRbounds} follows by Theorem~\ref{maindim2thm},
part \eqref{thm:CRbounds:ii} follows by Theorem~\ref{thm:nonhomog},
and finally part
\eqref{thm:CRbounds:iii} follows by Theorem~\ref{thm:eekbound2}.

Therefore, at least for monomial maps, indecomposability appears to be
the correct condition to require to obtain degree bounds for
general hyperquadric maps.


\def\MR#1{\relax\ifhmode\unskip\spacefactor3000 \space\fi%
  \href{http://www.ams.org/mathscinet-getitem?mr=#1}{MR#1}}

\begin{bibdiv}
\begin{biblist}


\bib{BEH}{article}{
   author={Baouendi, M. S.},
   author={Ebenfelt, Peter},
   author={Huang, Xiaojun},
   title={Super-rigidity for CR embeddings of real hypersurfaces into
   hyperquadrics},
   journal={Adv. Math.},
   volume={219},
   date={2008},
   number={5},
   pages={1427--1445},
   issn={0001-8708},
   review={\MR{2458142}},
}

\bib{BH}{article}{
   author={Baouendi, M. S.},
   author={Huang, Xiaojun},
   title={Super-rigidity for holomorphic mappings between hyperquadrics with
   positive signature},
   journal={J. Differential Geom.},
   volume={69},
   date={2005},
   number={2},
   pages={379--398},
   issn={0022-040X},
   review={\MR{2169869}},
}

\bib{DAngelo:CR}{book}{
      author={D'Angelo, John~P.},
       title={Several complex variables and the geometry of real
  hypersurfaces},
      series={Studies in Advanced Mathematics},
   publisher={CRC Press},
     address={Boca Raton, FL},
        date={1993},
        ISBN={0-8493-8272-6},
      review={\MR{1224231}},
}

\bib{D:prop1}{article}{
   author={D'Angelo, John P.},
   title={Proper holomorphic maps between balls of different dimensions},
   journal={Michigan Math. J.},
   volume={35},
   date={1988},
   number={1},
   pages={83--90},
   issn={0026-2285},
   review={\MR{931941}},
}


\bib{D:poly}{article}{
   author={D'Angelo, John P.},
   title={Polynomial proper holomorphic mappings between balls. II},
   journal={Michigan Math. J.},
   volume={38},
   date={1991},
   number={1},
   pages={53--65},
   issn={0026-2285},
   review={\MR{1091509}},
}


\bib{DKR}{article}{
   author={D'Angelo, John P.},
   author={Kos, {\v{S}}imon},
   author={Riehl, Emily},
   title={A sharp bound for the degree of proper monomial mappings between
   balls},
   journal={J. Geom. Anal.},
   volume={13},
   date={2003},
   number={4},
   pages={581--593},
   issn={1050-6926},
   review={\MR{2005154}},
}

\bib{DLP}{article}{
   author={D'Angelo, John P.},
   author={Lebl, Ji{\v{r}}{\'{\i}}},
   author={Peters, Han},
   title={Degree estimates for polynomials constant on a hyperplane},
   journal={Michigan Math. J.},
   volume={55},
   date={2007},
   number={3},
   pages={693--713},
   issn={0026-2285},
   review={\MR{2372622}},
}


\bib{DL:families}{article}{
      author={D'Angelo, John~P.},
      author={Lebl, Ji{\v r}\'{\i}},
       title={On the complexity of proper holomorphic mappings between balls},
     journal={Complex Var. Elliptic Equ.},
      volume={54},
        year={2009},
      number={2--3},
       pages={187--204},
      review={\MR{2513534}},
}
\bib{DL:complex}{article}{
      author={D'Angelo, John~P.},
      author={Lebl, Ji{\v r}\'{\i}},
      title={Complexity results for CR mappings between spheres},
      journal={Internat. J. Math.},
      volume={20},
      number={2},
      pages={149--166},
      year={2009},
      review={\MR{2493357}},
}





\bib{Faran:B2B3}{article}{
   author={Faran, James J.},
   title={Maps from the two-ball to the three-ball},
   journal={Invent. Math.},
   volume={68},
   date={1982},
   number={3},
   pages={441--475},
   issn={0020-9910},
   review={\MR{669425}},
}

\bib{Faran:lin}{article}{
   author={Faran, James J.},
   title={The linearity of proper holomorphic maps between balls in the low
   codimension case},
   journal={J. Differential Geom.},
   volume={24},
   date={1986},
   number={1},
   pages={15--17},
   issn={0022-040X},
   review={\MR{857373}},
}

\bib{FHJZ}{article}{
  author={Faran, James J.},
  author={Huang, Xiaojun},
  author={Ji, Shanyu},
  author={Zhang, Yuan},
   title={Polynomial and rational maps between balls},
 journal={Pure Appl.\ Math.\ Q.},
  volume={6},
  number={3},
    year={2010},
   pages={829--842},
  review={\MR{2677315}}
}


\bib{Forstneric}{article}{
   author={Forstneri{\v{c}}, Franc},
   title={Extending proper holomorphic mappings of positive codimension},
   journal={Invent. Math.},
   volume={95},
   date={1989},
   number={1},
   pages={31--61},
   issn={0020-9910},
   review={\MR{969413}},
}

\bib{Forstneric:survey}{article}{
   author={Forstneri{\v{c}}, Franc},
   title={Proper holomorphic mappings: a survey},
   conference={
      title={Several complex variables},
      address={Stockholm},
      date={1987/1988},
   },
   book={
      series={Math. Notes},
      volume={38},
      publisher={Princeton Univ. Press},
      place={Princeton, NJ},
   },
   date={1993},
   pages={297--363},
   review={\MR{1207867}},
}



\bib{HJX}{article}{
   author={Huang, Xiaojun},
   author={Ji, Shanyu},
   author={Xu, Dekang},
   title={A new gap phenomenon for proper holomorphic mappings from $B\sp n$
   into $B\sp N$},
   journal={Math. Res. Lett.},
   volume={13},
   date={2006},
   number={4},
   pages={515--529},
   issn={1073-2780},
   review={\MR{2250487}},
}





\bib{fewnomials}{book}{
   author={Khovanski{\u\i}, A. G.},
   title={Fewnomials},
   series={Translations of Mathematical Monographs},
   volume={88},
   note={Translated from the Russian by Smilka Zdravkovska},
   publisher={American Mathematical Society},
   place={Providence, RI},
   date={1991},
   pages={viii+139},
   isbn={0-8218-4547-0},
   review={\MR{1108621}},
}

\bib{L:hq23}{article}{
    author={Lebl, Ji{\v r}\'{\i}},
    title={Normal forms, Hermitian operators, and CR maps of spheres and hyperquadrics},
   journal={Michigan Math.\ J.},
   status={to appear},
    note = {preprint \href{http://arxiv.org/abs/0906.0325}{arXiv:0906.0325}}
}

\bib{LL}{article}{
    author={Lebl, Ji{\v r}\'{\i}},
    author={Lichtblau, Daniel},
    title={Uniqueness of certain polynomials constant on a hyperplane},
    journal={Linear Algebra Appl.},
    volume={433},
    year={2010},
    number={4},
    pages={824--837},
    note={\href{http://www.arxiv.org/abs/0808.0284}{arXiv:0808.0284}},
   review={\MR{2654111}},
}

\bib{Pincuk}{article}{
   author={Pin{\v{c}}uk, S. I.},
   title={Proper holomorphic maps of strictly pseudoconvex domains},
   language={Russian},
   journal={Sibirsk. Mat. \v Z.},
   volume={15},
   date={1974},
   pages={909--917, 959},
   issn={0037-4474},
   review={\MR{0355109}},
}

\end{biblist}
\end{bibdiv}

\end{document}